\newcommand*{\arxiv}[1]{\href{http://www.arxiv.org/abs/#1}{arXiv: #1}}
\numberwithin{equation}{section}
\theoremstyle{plain}
\newtheorem{theorem}[equation]{Theorem}
\newtheorem{lemma}[equation]{Lemma}
\newtheorem{proposition}[equation]{Proposition}
\newtheorem{corollary}[equation]{Corollary}
\theoremstyle{definition}
\newtheorem{definition}[equation]{Definition}
\theoremstyle{remark}
\newtheorem{remark}[equation]{Remark}
\newtheorem{example}[equation]{Example}
\DeclareMathOperator{\Aut}{Aut}
\DeclareMathOperator{\cspn}{\overline{span}}
\DeclareMathOperator{\id}{\mathrm{id}}
\newcommand*{\Prim}{\mathrm{Prim}}
\newcommand*{\SL}{\mathrm{SL}} 
\newcommand*{\PSL}{\mathrm{PSL}} 
\newcommand*{\SO}{\mathrm{SO}} 
\newcommand*{\SU}{\mathrm{SU}} 
\newcommand*{\tg}{\mathrm{tg}}
\newcommand*{\nb}{\nobreakdash}
\newcommand*{\Star}{\(^*\)\nobreakdash-}
\newcommand*{\dd}{\,\mathrm d} 
\newcommand*{\R}{\mathbb R}
\newcommand*{\T}{\mathbb T}
\newcommand*{\Z}{\mathbb Z}
\newcommand*{\N}{\mathbb N}
\newcommand*{\C}{\mathbb C}
\renewcommand*{\L}{\mathcal L}
\newcommand*{\K}{\mathcal K}
\renewcommand*{\H}{\mathcal H}
\newcommand*{\F}{\mathcal F} 
\newcommand*{\cont}{C}
\newcommand*{\contz}{\cont_0}
\newcommand*{\contc}{\cont_c}
\newcommand*{\M}{\mathcal M}
\newcommand{\brg}{\mathrm{Br}(G)}
\newcommand{\tildeV}{\widetilde{V}}
\newcommand{\tig}{\tilde{g}}
\newcommand*{\Ad}{\textup{Ad}}
\newcommand*{\U}{\mathcal U}
\newcommand*{\E}{\mathcal E}
\newcommand*{\X}{\mathcal X}
\newcommand{\rt}{\mathrm{rt}}
\newcommand{\Br}{\operatorname{Br}}
\newcommand*{\defeq}{\mathrel{\vcentcolon=}}
\newcommand*{\congto}{\xrightarrow\sim}
\newcommand*{\braket}[2]{\langle#1\!\mid\!#2\rangle}
\newcommand*{\sbe}{\subseteq} 
\newcommand*{\cstar}{\texorpdfstring{$C^*$\nobreakdash-\hspace{0pt}}{*-}}
\newcommand*{\into}{\hookrightarrow}
\newcommand*{\onto}{\twoheadrightarrow}
\newcommand*{\red}{r}
\renewcommand*{\max}{\mathrm{max}}
\newcommand*{\dual}[1]{\widehat{#1}}
\newcommand*{\dualG}{\widehat{G}}
\newcommand{\om}{\omega}
\newcommand{\Om}{\Omega}
\newcommand{\PU}{\mathcal PU}
\newcommand{\Twist}{\mathrm{Twist}}
\newcommand*{\ev}{\mathrm{ev}}
\newcommand{\lk}{\langle}
\newcommand{\rk}{\rangle}
\newcommand{\csp}{\overline{\operatorname{span}}}
\renewcommand{\top}{\mathrm{top}}
\newcommand{\car}{\curvearrowright}
\newcommand{\A}{\mathcal A}
\newcommand{\B}{\mathcal B}
\newcommand{\G}{\mathcal G}
\newcommand{\hatG}{\widehat{G}}
\newcommand\Hom{\operatorname{Hom}}
\begin{document}

\title[A new approach to deformation of C*-algebras]{A new approach to deformation of C*-algebras via coactions}

\author{Alcides Buss}
\email{alcides.buss@ufsc.br}
\address{Departamento de Matem\'atica\\
 Universidade Federal de Santa Catarina\\
 88.040-900 Florian\'opolis-SC\\
 Brazil}

\author{Siegfried Echterhoff}
\email{echters@uni-muenster.de}
\address{Mathematisches Institut\\
Universit\"at M\"un\-ster\\
 Einsteinstr.\ 62\\
 48149 M\"unster\\
 Germany}

\begin{abstract}
We revisit the procedure of deformation of \cstar{}algebras via coactions of locally compact groups and extend the methods to cover deformations for maximal, reduced, and  exotic coactions for a given group $G$ and circle-valued Borel $2$-cocycles on $G$. In the special case of reduced (or normal) coactions our deformation method substantially differs from -- but turns out to be equivalent to -- the ones used by previous authors, specially those given by Bhowmick, Neshveyev, and Sangha in \cite{BNS}.

Our approach yields all expected results, like a good behaviour of deformations under nuclearity, continuity of fields of C*-algebras and $K$-theory invariance under mild conditions.
\end{abstract}

\subjclass[2010]{46L55, 22D35}

\keywords{Deformation, Coactions, Exotic Crossed Products, Borel Cocycles, $K$-theory}

\thanks{This work was funded by: the Deutsche Forschungsgemeinschaft (DFG, German Research Foundation) Project-ID 427320536 SFB 1442 and under Germany's Excellence Strategy EXC 2044  390685587, Mathematics M\"{u}nster: Dynamics, Geometry, Structure; and CNPq/CAPES/Humboldt - Brazil.}
\maketitle

\tableofcontents

\section{Introduction}
In his pioneering work \cites{Rieffel:Deformation, Rief-K}   Rieffel introduced the concept of a deformation of a \cstar{}algebra $A$  equipped 
with an action $\alpha:\R^d\car A$ of the vector group $\R^d$ by some real skew-symmetric $d\times d$-matrix $\Theta\in \mathbb{M}_d(\R)$ by directly constructing a deformed multiplication using oscillatory integrals. A different approach was later given by Kasprzak  in \cite{Kasprzak:Rieffel} by describing deformations 
of a \cstar{}algebra $A$ equipped with an action $\alpha:\widehat{G}\car A$ of the Pontrjiagin dual $\widehat{G}$ of a locally compact abelian group $G$ via  continuous $2$-cocycles $\omega\in Z^2(G,\T)$ (here we switched the roles of $G$ and $\widehat{G}$ to be consistent with what follows below). The idea is easy to explain: We consider the crossed-product \cstar{}algebra $B:=A\rtimes_{\alpha}\widehat{G}$ together with the dual action $\beta:=\widehat{\alpha}:G\car B$. Identifying  $C^*(\widehat{G})$ with $C_0(G)$ via Fourier transform, the canonical representation 
$\iota_{\widehat{G}}:\widehat{G}\to U\M(B)$ integrates to give a nondegenerate \Star{}homomorphism $\phi:C_0(G)\to \M(B)$.
Now, a continuous cocycle  $\omega:G\times G\to \T$ determines a map $g\mapsto \omega(\cdot,g)\in C(G,\T) 
=U\M(C_0(G))$. Composing this with $\phi$ we obtain a map $U^\omega:G\to U\M(B); g\mapsto U^\omega_g=\phi(\overline{\omega}(\cdot,g))$. Using the cocycle condition on $\omega$, one can check that $U^\omega$ is an `$\om$-twisted' $1$-cocycle for $\beta$, so that 
$$\beta^\omega:G\car B,\quad \beta^\omega_g=\Ad U^{\omega}_g\circ\beta_g$$
becomes a new  action of $G$ on $B$ for which  $\phi:C_0(G)\to \M(B)$ is $\rt-\beta^\omega$ equivariant, where $\rt_g(f)(h)=f(hg)$ denotes the right translation action of $G$ on $C_0(G)$. It follows then from Landstad duality for actions of abelian groups (e.g., see \cite{Landstad:Duality}) that there exists a 
unique (up to isomorphism) \cstar{}algebra $A^\omega$ together with an action $\alpha^\omega:G\car A^\omega$ such that 
$$(B, \beta^\omega,\phi)\cong  (A^\omega\rtimes_{\alpha^\omega}\widehat{G}, \widehat{\alpha^\omega},\phi^\om).$$
 If $\omega$ is trivial, then 
we recover the original system $(A,G,\alpha)$.
It is shown in \cite{BNS} that in case of $G=\R^n$ this reduces to the original Rieffel deformation. 

Kasprzak's approach to \cstar{}deformations has been extended in various directions to general locally compact groups (\cites{Kasprzak1,  BNS}) and  to locally compact quantum groups (\cite{NT}). Indeed, in \cite{NT} Neshveyev and Tuset provide a general approach to deform locally compact quantum groups $\mathbb{G}$ and their (reduced) coactions using quantum versions of measurable $2$-cocycles as certain (unitary) elements of the von Neumann algebra $\Omega\in L^\infty(\mathbb{G})\bar\otimes L^\infty(\mathbb{G})$ associated with $\mathbb{G}$. This general approach is compatible with deformation of locally compact group coactions from \cites{Kasprzak1, BNS}, and therefore also with ours. Deformation via $2$-cocycles is also related to the theory of Galois objects for locally compact quantum groups, as developed by De Commer \cite{DCo}, see also previous works by Baaj and Crespo \cite{BCr} and De Rijdt and Vander Vennet \cite{DRVV}. 

In this paper we only deal with locally compact groups and their (co)actions. For a  non-abelian locally compact group $G$, the analogue of an action of the (nonexistent) dual group 
 $\widehat{G}$  on a \cstar{}algebra $A$ is given by a coaction $\delta:A\to \M(A\otimes C^*(G))$. The crossed product 
 $B=A\rtimes_{\delta}\widehat{G}$ then comes with a canonical nondegenerate inclusion $\phi:C_0(G)\to \M(B)$ and a dual action 
 $\beta:=\widehat{\delta}:G\car B$. Similar as in Kasprzak's approach, given a 
 {\em continuous} cocycle $\omega:G\times G\to \T$ on $G$ we may construct the deformed 
 action  $\beta^\omega:G\car B$ and use Landstad duality to obtain the deformed coaction $\delta^\omega:A^\omega\to \M(A^\omega\otimes C^*(G))$ such that 
 $$(B, \beta^\omega,\phi)\cong  (A^\omega\rtimes_{\delta^\omega}\widehat{G}, \widehat{\delta^\omega}, \phi^\om).$$
This approach has been followed by Bhowmick, Neshveyev, and Sangha in \cite{BNS} who also extended the method to cover 
  deformations by possibly non-continuous Borel cocycles. But \cite{BNS} only covers the case of 
{\em normal} (or reduced) coactions, i.e., coactions for which the composition $(\id_A\otimes\lambda)\circ \delta$ is faithful on $A$,
where $\lambda:C^*(G)\to C_r^*(G)$ denotes the (integrated form of the)  left regular representation of $G$. To see that this leads to severe restrictions, let us look at the case of dual coactions $\delta=\widehat{\beta}$ on maximal crossed products
$A:=B\rtimes_{\beta,\max}G$.
The natural candidate for the deformed algebra $A^\omega$ in this situation should be the crossed product 
$B\rtimes_{(\beta,\iota^\om),\max}G$ twisted by $\om$. But the method of \cite{BNS} only applies to dual coactions of reduced crossed products and hence 
does not cover this obvious situation. The reason for this lies in the fact that the main computations are 
performed in a faithful representation of the 
reduced double crossed product $A\rtimes_{\delta}\widehat{G}\rtimes_{\widehat{\delta},r}G$ inside $\mathcal M\big(A\otimes \K\otimes \K\big)$ with
$\K:=\K(L^2(G))$, 
which restricts to a faithful representation of $A$ if and only if  $\delta$ is reduced (or {\em normal} in the notation introduced by Quigg in \cite{Quigg}). More generally, as we shall see below, a general coaction $(A,\delta)$ of $G$ corresponds to some exotic crossed product $\rtimes_\mu$ which lies between the maximal and reduced crossed products in a certain sense (see \cites{BGW, BEW,BEW2} for a general treatment), so a complete picture should cover such situations as well: in particular for a (``nice'') exotic crossed product $B\rtimes_{\beta,\mu}G$ carrying a dual coaction, we would like to have the twisted exotic crossed product $B\rtimes_{(\beta,\iota^\omega),\mu}G$ as its deformed algebra, endowed with the associated dual coaction.

The main goal of this paper is to overcome the above problems and obtain some of the expected deformation results by using the theory of generalized fixed-point algebras 
and Landstad duality as introduced by the authors in \cite{Buss-Echterhoff:Exotic_GFPA}.
Even if we stick to normal coactions, our approach leads to a new picture of the deformed coactions.

Our general deformation procedure starts with a coaction  $(A,\delta)$ of $G$ which we assume to satisfy Katayama duality with respect to some exotic crossed product $\rtimes_\mu$, i.e., there is a 
canonical isomorphism 
$$A\rtimes_{\delta}\widehat{G}\rtimes_{\widehat\delta,\mu}G\cong A\otimes \K(L^2(G)).$$
We then say that $(A,\delta)$ is a $\mu$-coaction.
As explained above, the  crossed product $B:=A\rtimes_\delta\widehat{G}$ then comes with a dual action $\beta:=\widehat{\delta}$ together with a structure map $\phi:C_0(G)\to \M(B)$ which is $\rt-\beta$ equivariant for the right translation action $\rt:G\car C_0(G)$. Such triple 
$(B,\beta,\phi)$ is called a {\em weak $G\rtimes G$-algebra}.
Exotic Landstad duality allows us to recover the coaction $(A,\delta)$ from the weak $G\rtimes G$-algebra $(B,\beta,\phi)$ and the crossed product $B\rtimes_\mu G$.  Now any `deformation' $\beta^\om$ of the action $\beta$ to a new action  such that $\phi:C_0(G)\to \M(B)$ remains to be $\rt-\beta^\om$ equivariant (e.g., via a continuous Borel cocycle $\om$ as above) gives a new 
weak $G\rtimes G$-algbra structure $(B,\beta^\om,\phi)$ such that Landstad duality provides a deformed $\mu$-coaction $(A_\mu^\om,\delta_\mu^\om)$ of the original $\mu$-coaction $(A,\delta)$. 

The deformation procedure by Borel $2$-cocycles $\omega$ on $G$ is  more complicated. 
Our approach is better explained in terms of group extensions: given a central extension of locally compact groups 
$$\sigma=(\T\stackrel{\iota}\into  G_\sigma \stackrel{q}\onto G)$$
of $G$ by the circle group $\T$,  we look at its associated complex line bundle and take its $\contz$-sections, which we write as $\contz(G_\sigma,\iota)$. This is an imprimitivity $\contz(G)-\contz(G)$ bimodule, and it implements an equivalence between the ordinary (right) translation $G$-action on the left $\contz(G)$-module structure and a ``twisted'' translation $G$-action on the right $\contz(G)$-module structure. Every such extension corresponds to a $G$-action on the \cstar{}algebra $\K$ of compact operators on a Hilbert space. This action is unique up to Morita equivalence and determines an isomorphism from the group $\Twist(G)$ of isomorphism classes $[\sigma]$ of extensions $\sigma$ 
of $G$ as above, and the Brauer group $\Br(G)$ of Morita equivalence classes $[\alpha]$ of actions of $\alpha:G\car \K(\H)$. 
Indeed, there  are canonical group isomorphisms $\Twist(G)\cong H^2(G,\T)\cong \Br(G)$, and 
any of these can serve as deformation data for a $G$-coaction or, equivalently, a weak $G\rtimes G$-algebra $(B,\beta,\phi)$. 

Given an element $[\sigma]\in \Twist(G)$ represented by an extension $\sigma=(\T\into G_\sigma\onto G)$ 
and  a weak $G\rtimes G$-algebra $(B,\beta,\phi)$, we consider the Hilbert $B$-module
$$\L(G_\sigma, B):=\contz(G_\sigma,\iota)\otimes_\phi B.$$
We then prove that the \cstar{}algebra $B_\sigma:=\K(\L(G_\sigma, B))$ carries a canonical action $\beta_\sigma:G\car B_\sigma$ and a structure map
$\phi_\sigma:C_0(G)\to \M(B_\sigma)$ such that the triple $(B_\sigma,\beta_\sigma,\phi_\sigma)$ 
 becomes a weak $G\rtimes G$-algebra. So we can apply (exotic) Landstad duality to obtain the deformed $\mu$-coaction $(A^\sigma_\mu,\delta^\sigma_\mu)$ if we start with the $\mu$-coaction $(A,\delta)$ and the weak $G\rtimes G$-algebra $(B,\beta,\phi)=(A\rtimes_{\delta}\dualG, \widehat{\delta}, j_{\contz(G)})$.

If $\omega$ is a continuous $2$-cocycle and $(A, \delta)$ satisfies Katayama duality for the reduced 
crossed product, then we show that the above deformation procedure covers all previous deformation approaches of 
\cites{Kasprzak:Rieffel, BNS} via cocycles. More precisely, we prove that $(A^\omega_\mu,\delta^\omega_\mu)$ is 
canonically isomorphic to $(A^\sigma_\mu,\delta^\sigma_\mu)$ if $\sigma=(\T\into G_\om\onto G)$ is the extension 
corresponding to $\om$ via the isomorphism $H^2(G,\T)\cong \Twist(G)$ mentioned above.
The same is true for the Bhowmick-Neshveyev-Sangha deformation by Borel cocycles, but the proof turned out to be very long and technical, so we postpone the details to some later publication. 
Different from \cite{BNS}, our approach does not use any separability assumptions on $A$ or $G$.
Using our approach to deformation, we prove some of the expected results. In particular we prove

\begin{theorem}[Nuclearity]
Suppose $(A,\delta)$ is a coaction  such that the dual action $\beta=\dual\delta$ on $B=A\rtimes_\delta\dualG$ is amenable (e.g.,~if $G$ is amenable). Then  $(A,\delta)$ is normal and, for every $\sigma\in \Twist(G)$, the deformed \cstar{}algebra $A^\sigma$ is nuclear if and only if $A$ is nuclear.
\end{theorem}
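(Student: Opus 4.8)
The plan is to transport the whole question to the crossed product $B=A\rtimes_\delta\dualG$, where amenability of the dual action $\beta=\dual\delta$ makes nuclearity behave well, and then to move back and forth between $A$ and $B$ using Katayama duality, and between $B$ and $B^\alpha$ using Morita equivalence. First I would dispose of normality. Since the deformation $A^\alpha$ is defined, $(A,\delta)$ is a $\mu$-coaction for some crossed-product functor $\rtimes_\mu$, i.e.\ $B\rtimes_{\beta,\mu}G\cong A\otimes\K(L^2(G))$. Amenability of $\beta$ forces $B\rtimes_\beta G=B\rtimes_{\beta,r}G$, and since every exotic functor sits between the maximal and reduced ones, the composite quotient $B\rtimes_\beta G\to B\rtimes_{\beta,\mu}G\to B\rtimes_{\beta,r}G$ being an isomorphism forces $\rtimes_\mu=\rtimes_r$. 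Thus $(A,\delta)$ satisfies reduced Katayama duality, which is exactly the condition characterising normality of $\delta$ in the framework of \cite{Buss-Echterhoff:Exotic_GFPA}; hence $(A,\delta)$ is normal.

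For the nuclearity statement the key structural input is that $B^\alpha=\K(\E_\alpha(B))$ is the algebra of compact operators of a full Hilbert $B$-module, so $B^\alpha$ is Morita equivalent to $B$, and this equivalence is $G$-equivariant for the deformed dual action $\beta^\alpha=\dual{\delta^\alpha}$ and $\beta$. Because both nuclearity and amenability of actions are Morita invariants, I would conclude that $B^\alpha$ is nuclear if and only if $B$ is, and that $\beta^\alpha$ is again amenable. Feeding amenability of $\beta^\alpha$ back into the first paragraph shows that $\delta^\alpha$ is normal as well, so $A^\alpha$ and $B^\alpha$ are related by reduced Katayama duality in the same way $A$ and $B$ are.

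It then remains to link nuclearity of $A$ to that of $B$. By normality together with amenability, reduced Katayama duality and the coincidence of reduced and full crossed products give $A\otimes\K(L^2(G))\cong B\rtimes_\beta G$. Since tensoring by $\K$ neither creates nor destroys nuclearity, $A$ is nuclear if and only if $B\rtimes_\beta G$ is; and for an amenable action the crossed product is nuclear if and only if the coefficient algebra is, so $A$ is nuclear if and only if $B$ is. The identical chain applied to the deformed system gives that $A^\alpha$ is nuclear if and only if $B^\alpha$ is. Combining the three equivalences
$$A\ \text{nuclear}\ \Longleftrightarrow\ B\ \text{nuclear}\ \Longleftrightarrow\ B^\alpha\ \text{nuclear}\ \Longleftrightarrow\ A^\alpha\ \text{nuclear}$$
completes the proof.

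The main obstacle I anticipate is the nontrivial direction \emph{crossed product nuclear $\Rightarrow$ coefficient algebra nuclear} for a genuinely non-discrete group carrying a merely amenable (rather than proper or compact) action: this is where the full strength of the Anantharaman--Delaroche characterisation of nuclearity for amenable actions is needed and must be invoked with care. A secondary technical point is verifying that the Morita equivalence $B\sim B^\alpha$ coming from $\E_\alpha(B)$ is genuinely $G$-equivariant, so that amenability of the action — and hence, via the first paragraph, normality of $\delta^\alpha$ — really does transfer to the deformed system.
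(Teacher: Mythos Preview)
Your overall strategy is sound and close to the paper's, but there is one genuine error in the middle step. The module $\E_\alpha(B)$ is a full Hilbert $B\otimes\K(\H)$-module (not a Hilbert $B$-module), and the $G$-equivariant Morita equivalence it implements is between $(B^\alpha,\beta^\alpha)$ and $(B\otimes\K(\H),\beta\otimes\alpha)$, \emph{not} between $(B^\alpha,\beta^\alpha)$ and $(B,\beta)$. Indeed, if $(B^\alpha,\beta^\alpha)$ were $G$-equivariantly Morita equivalent to $(B,\beta)$, then $B^\alpha\rtimes_{\beta^\alpha}G$ would be Morita equivalent to $B\rtimes_\beta G$, hence $A^\alpha$ would be Morita equivalent to $A$ for every $\alpha$, and deformation would be trivial. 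So the ``secondary technical point'' you flag is not a detail to be verified but a claim that is actually false as stated.

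The fix is easy: to transfer amenability from $\beta$ to $\beta^\alpha$ you need the intermediate step $\beta$ amenable $\Rightarrow$ $\beta\otimes\alpha$ amenable (this is \cite{BEW:amenable}*{Theorem~5.16} in the paper), and then the genuine equivariant Morita equivalence $(B^\alpha,\beta^\alpha)\sim(B\otimes\K(\H),\beta\otimes\alpha)$ gives $\beta^\alpha$ amenable. With this correction your symmetric chain
\[
A\ \text{nuclear}\ \Longleftrightarrow\ B\ \text{nuclear}\ \Longleftrightarrow\ B^\alpha\ \text{nuclear}\ \Longleftrightarrow\ A^\alpha\ \text{nuclear}
\]
goes through, since the middle equivalence only needs Morita equivalence of $C^*$-algebras (no equivariance), and the outer two use amenability of $\beta$ and $\beta^\alpha$ respectively. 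The paper's route is slightly different: it never isolates ``$B^\alpha$ nuclear'' as a step but goes directly $B$ nuclear $\Rightarrow (B\otimes\K(\H))\rtimes G$ nuclear $\Rightarrow B^\alpha\rtimes G$ nuclear $\Rightarrow A^\alpha$ nuclear, and handles the converse via the inversion trick $A\cong (A^\alpha)^{\bar\alpha}$ in $\brg$ rather than by running the chain backwards. Your symmetric version is arguably cleaner once the equivariance is stated correctly.
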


By a continuous family of twists $x\mapsto \sigma_x=(\T\into G_{\sigma_x}\onto G)$ over a locally compact space $X$, 
we shall understand a central extension 
$$\Sigma=(X\times\T\into \G\onto X\times G)$$
of the group bundle $X\times G$ over $X\times \T$ in the sense of groupoids (see the discussions in Section~\ref{sec:twists} below).

\begin{theorem}[Continuity]
Suppose that $X$ is a locally compact Hausdorff space and $(\sigma_x)_{x\in X}$ is a continuous family of twists over $X$. Then for every exact 
duality crossed-product functor $\rtimes_\mu$, the family of deformed coactions $(A^{\sigma_x}_\mu,\delta^{\sigma_x}_\mu)$ is upper semicontinuous in the sense that they can be realized as  fibres of a $G$-coaction on a $\contz(X)$-algebra $(A^\Sigma_\mu,\delta^\Sigma_\mu)$. 

If $G$ is exact, the bundle of normal deformed coactions $(A_{\red}^{\sigma_x},\delta_{\red}^{\sigma_x})$ corresponding to the reduced crossed-product functor $\rtimes_{\red}$ is continuous. 
\end{theorem}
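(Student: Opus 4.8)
The plan is to carry out the entire deformation construction once and for all with $\contz(X)$\nb-coefficients, so that the individual deformations appear as the fibres of a single $\contz(X)$\nb-algebra, and then to extract (upper semi)continuity from the general theory of $\contz(X)$\nb-algebras together with exactness. By hypothesis the family $(\alpha_x)_{x\in X}$ is a single $\contz(X)$\nb-linear $G$\nb-action $\alpha$ on $\contz(X,\K)$, which plays the r\^ole of a ``fibred'' class in $\Br(G)$: it yields a $\contz(X)$\nb-linear Hilbert module $\H_\alpha$ (with fibre $\H_{\alpha_x}$ at $x$) together with a continuous $\contz(X)$\nb-linear bundle of line\nb-bundle sections, still denoted $\contz(G_\alpha,\iota)$. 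Replacing the coefficient algebra $B$ of the given weak $G\rtimes G$\nb-algebra by the $\contz(X)$\nb-algebra $\contz(X,B)=\contz(X)\otimes B$, which carries the evident $\contz(X)$\nb-linear weak $G\rtimes G$\nb-structure, I form exactly as before the Hilbert $\contz(X,B)$\nb-module
$$\E_\alpha(\contz(X,B))=\contz(G_\alpha,\iota)\otimes_\phi \contz(X,B)\otimes\H_\alpha^*$$
and set $B^\alpha:=\K\big(\E_\alpha(\contz(X,B))\big)$. This is again a weak $G\rtimes G$\nb-algebra, now $\contz(X)$\nb-linearly, so exotic Landstad duality applied in the $\contz(X)$\nb-linear setting produces a $\contz(X)$\nb-algebra $A^\alpha_\mu$ together with a $\mu$\nb-coaction $\delta^\alpha_\mu$.

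Next I would identify the fibres. Internal tensor products of Hilbert modules and the passage to compact operators both commute with the fibre functor $C\mapsto C_x=C/\big(\contz(X\setminus\{x\})\cdot C\big)$, whence $\E_\alpha(\contz(X,B))_x\cong\E_{\alpha_x}(B)$ and therefore $(B^\alpha)_x\cong\K(\E_{\alpha_x}(B))=B^{\alpha_x}$ as weak $G\rtimes G$\nb-algebras. Since the generalized fixed\nb-point description of exotic Landstad duality from \cite{Buss-Echterhoff:Exotic_GFPA} is natural and $\contz(X)$\nb-linear, and since exactness of $\rtimes_\mu$ guarantees that it carries the short exact sequences $0\to\contz(X\setminus\{x\},B)\to\contz(X,B)\to B\to0$ to exact sequences, the fibre of the constructed algebra is canonically $(A^\alpha_\mu)_x\cong A^{\alpha_x}_\mu$, compatibly with the coactions. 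Thus $(A^{\alpha_x}_\mu,\delta^{\alpha_x}_\mu)$ are precisely the fibres of the $\contz(X)$\nb-algebra $A^\alpha_\mu$, and upper semicontinuity is then automatic, because every $\contz(X)$\nb-algebra is an upper semicontinuous $C^*$\nb-bundle over $X$.

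For the second assertion I would use that the normal deformation $A^{\alpha_x}_{\red}$ is the generalized fixed\nb-point algebra of $B^{\alpha_x}$ computed through the \emph{reduced} crossed product, so the field $(A^{\alpha_x}_{\red})_{x\in X}$ is governed by the reduced crossed product of the $\contz(X)$\nb-algebra $B^\alpha$. When $G$ is exact the reduced crossed-product functor preserves short exact sequences, hence sends continuous $\contz(X)$\nb-algebras to continuous ones; this forces the norm functions $x\mapsto\|a(x)\|$ on $A^\alpha_{\red}$ to be continuous rather than merely upper semicontinuous, giving the desired continuous bundle.

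The step I expect to be the main obstacle is the compatibility of exotic Landstad duality with base change asserted in the second paragraph: one must verify that the generalized fixed\nb-point construction commutes with the fibre quotients and that the fibre of the fixed\nb-point algebra is the fixed\nb-point algebra of the fibre. In general the resulting fibre map is only surjective -- which is exactly what produces the upper semicontinuity phenomenon -- and its injectivity, equivalently full continuity of the field, is precisely what can fail in the absence of exactness. Thus the exactness hypotheses (on $\rtimes_\mu$ in the first part, on $G$ for $\rtimes_\red$ in the second) are doing the essential work.
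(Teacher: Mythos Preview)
Your proposal is essentially correct and follows the same route as the paper: bundle the deformation construction $\contz(X)$\nb-linearly, identify the fibres, and use exactness of the crossed-product functor (resp.\ of $G$) to obtain upper semicontinuity (resp.\ continuity). The paper makes one structural step more explicit than you do, and it is worth noting because it resolves precisely the obstacle you flag in your final paragraph. Rather than arguing directly that the generalized fixed-point construction commutes with fibre quotients, the paper produces a $\contz(X)$\nb-linear \emph{Morita equivalence} between the bundle $(\A_\mu^\alpha, X, q_x:\A_\mu^\alpha\to A_\mu^{\alpha_x})$ and the crossed-product bundle obtained from the action $\beta\otimes\alpha$ on the trivial bundle $\contz(X, B\otimes\K(\H))$. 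A separate lemma shows that Morita equivalence of bundles transfers faithfulness, upper semicontinuity, and continuity; thus one only has to establish these properties for the crossed-product bundle, where exactness of $\rtimes_\mu$ (resp.\ Kirchberg--Wassermann for $\rtimes_\red$ and exact $G$) applies directly. Your short exact sequence $0\to\contz(X\setminus\{x\},B)\to\contz(X,B)\to B\to0$ is not quite the right object---the Landstad construction is applied to $B^\alpha$, not to $\contz(X,B)$---but the Morita equivalence with $\contz(X,B\otimes\K)$ is exactly what bridges this gap, so your intuition is sound. The paper also spends some effort on a technical point you pass over: showing that the groupoid $\G_\alpha=\{(x,g,v):\alpha^x_g=\Ad v\}$ is locally compact, so that $\contz(\G_\alpha,\iota)$ makes sense as a $\contz(X\times G)$\nb-bimodule.
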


As we shall explain below, a {\em duality} crossed-product functor is a functor which assigns to each action $\beta:G\car B$ a crossed product $B\rtimes_{\beta,\mu}G$ which admits a dual coaction $\widehat{\beta}_\mu$.
The following theorem requires $\rtimes_\mu$ to be a {\em correspondence crossed product functor} as in  Definition~\ref{def-correspondence} below. These cover in particular the maximal and reduced crossed products as well as the smallest exact Morita compatible crossed-product functor $\rtimes_{\epsilon}$ which appears in a new formulation of the Baum-Connes conjecture of \cite{BGW}. Note that every correspondence functor is also a duality functor.

\begin{theorem}[$K$-theory invariance]
Suppose that $A$ is separable and $\sigma_0$ and $\sigma_1$ are homotopic twists. If $G$ is second countable and satisfies 
the Baum-Connes conjecture with coeficients, then $$K_*(A^{\sigma_0}_r)\cong K_*(A^{\sigma_1}_r).$$
If, in addition, $G$ is $K$-amenable, then 
$K_*(A^{\sigma_0}_\mu)\cong K_*(A^{\sigma_0}_r)\cong K_*(A^{\sigma_1}_r)\cong K_*(A^{\sigma_1}_\mu)$ 
for every correspondence crossed-product functor $\rtimes_\mu$. 
\end{theorem}

Besides deformation via twists (or cocycles), we also consider a deformation procedure inspired by the deformation of Fell bundles
introduced by Abadie and Exel in \cite{Abadie-Exel:Deformation}. We shall recall the basic ingredients for this type of deformation, and will explain how it fits in our general approach to deformation of coactions. In the forthcoming paper 
\cite{BE:Fellbundles} we shall study more deeply the deformation by dual coactions  $(A,\delta)=(C_\mu^*(\A), \delta_\mu)$ of (exotic) cross-sectional algebras of Fell bundles $p:\A\to G$ over $G$.
In particular, we shall see that the above described deformations can all be obtained by direct deformations of the underlying Fell bundle $\A$.

This paper is organized as follows.
The necessary background on exotic crossed products and Landstad duality via generalized fixed-point algebras 
will be summarized in the preliminaries Section~\ref{sec:preliminaries}. 
In Section~\ref{sec:Landstad} we introduce our general approach for deformation of coactions and explain how 
Abadie-Exel deformations and deformations by continuous cocycles fit into this setting.
In Section~\ref{sec:twists} we recall the theory of central extensions by the circle group, and how this relates to actions on compact operators and $2$-cocycles. We then use this in Section~\ref{sec:deformation-twist} to deform coactions via twists (or by $2$-cocycles,  or by actions on compact operators). At the end of Section~\ref{sec:deformation-twist} we give the first applications of the theory, proving that our constructions behave well with respect to products of extensions, and with respect to nuclearity.
In the final Sections~\ref{sec:continuity} and~\ref{sec-K-theory} we give more elaborate applications of our theory, proving that our deformation approach also behaves well with respect to continuity of fields and $K$-theory.

\subsection*{Acknowledgement}  Most of this work has been written while the first author was visiting the University of Münster, and he would like to thank the whole group, specially the second author for the warm hospitality!

\section{Preliminaries}\label{sec:preliminaries}

\subsection{Exotic crossed products}
Let $\beta:G\car B$ be an action of the locally compact group $G$ on the \cstar{}algebra $B$, that is, a group homomorphism $\beta\colon G\to \Aut(B)$ such that for all $b\in B$ the map $t\mapsto \beta_t(b)$ is continuous from $G$ to $B$. By a {\em crossed product}
for $\beta:G\car B$ we understand any \cstar{}completion $B\rtimes_{\beta,\mu}G$
of the convolution algebra $C_c(G,B)$ 
with respect to any \cstar{}norm $\|\cdot\|_\mu$ such that for all $f\in C_c(G,B)$ we have
$$\|f\|_{\red}\leq \|f\|_\mu\leq \|f\|_\max,$$
where $\|\cdot\|_{\red}$ and $\|\cdot\|_{\max}$ denote the usual reduced and maximal crossed-product norms, which provide the usual reduced and maximal crossed products $B\rtimes_{\beta,\red}G$ and $B\rtimes_{\beta,\max}G$, respectively.
If $B\rtimes_{\beta,\mu}G$ differs from both of these, we call it an {\em exotic} crossed product. Note that the identity map on $C_c(G,B)$ induces surjective \Star{}homomorphisms
$$B\rtimes_{\beta,\max}G\onto B\rtimes_{\beta,\mu}G\onto B\rtimes_{\beta,r}G.$$

By a {\em crossed-product functor} we understand a choice of a crossed product $B\rtimes_{\beta,\mu}G$ 
for {\bf every} action $\beta\colon G\car B$, such that whenever we have a $\beta-\alpha$ 
equivariant \Star{}homomorphism $\Phi:B\to A$ for some action $\alpha:G\car A$,
then  the induced \Star{}homomorphism
$$\Phi\rtimes G: C_c(G,B)\to C_c(G, A); f\mapsto \Phi\circ f$$
extends to a \Star{}homomorphism $\Phi\rtimes_\mu G: B\rtimes_{\beta,\mu}G\to A\rtimes_{\alpha,\mu}G$.
It is well known that the maximal and reduced crossed products are crossed-product functors. 

Almost all of our constructions will require that a given crossed product functor is a {\em duality functor} in the sense that 
for every action $\beta:G\car B$ the dual coaction $\widehat{\beta}$ of $G$ on the maximal crossed product $B\rtimes_{\beta,\max}G$
factors through a coaction
$$\widehat{\beta}_\mu:B\rtimes_{\beta,\mu}G\to\M(B\rtimes_{\beta,\mu}G\otimes C^*(G)).$$
But for some of our purposes we even need stronger functoriality conditions. Recall that a $G$-equivariant correspondence between actions $(B,\beta)$  and $(A,\alpha)$ of $G$ consists of a pair $(\mathcal X,\gamma)$ such that $\mathcal X$ is a Hilbert $B$-module equipped with a $\beta$-compatible action $\gamma:G\car \mathcal X$ and a $G$-equivariant left action of $A$ on $\mathcal X$ by adjointable operators.

\begin{definition}\label{def-correspondence}
A crossed-product functor $\rtimes_\mu$  is a {\em correspondence crossed-product functor}
if for every $G$-equivariant correspondence $(\mathcal X,\gamma)$ between actions $(B,\beta)$  and $(A,\alpha)$ of $G$ the canonical $C_c(G, A)-C_c(G, B)$ bimodule $C_c(G, \X)$ 
completes to a $A\rtimes_{\alpha,\mu}G-B\rtimes_{\beta,\mu}G$ correspondence bimodule $\X\rtimes_{\gamma,\mu}G$.
\end{definition}

By \cite{BEW2}*{Theorem 4.14} we know that all correspondence functors are duality functors. Notice that a correspondence crossed-product functor in particular respects equivariant Morita equivalences: 
if $(\X,\gamma)$ is an $(A,\alpha)-(B,\beta)$ equivalence bimodule, then $\X\rtimes_{\gamma,\mu}G$ becomes an $A\rtimes_{\alpha,\mu}G-B\rtimes_{\beta,\mu}G$ equivalence bimodule.
We say $\rtimes_\mu$ is a \emph{Morita compatible} crossed-product functor when this happens. 

The maximal and the reduced crossed products are always correspondence crossed-product functors. The same holds for the smallest exact Morita compatible crossed-product functor $\rtimes_{\epsilon}$ of \cite{BGW}. 
Correspondence crossed-product functors  have been studied extensively in \cite{BEW}, where  it is  shown that 
for many non-amenable groups (like the free groups in $n$ generators with $n\geq 2$) there
 exist uncountably many distinct  correspondence functors.  
 
The above concepts can be extended to twisted actions, or more generally, to Fell bundles (see \cite{Buss-Echterhoff:Maximality}).
We essentially only use ordinary actions in this paper, but at some points we also comment how our results extend to Fell bundles and we also consider actions which are twisted by a circle-valued $2$-cocycle on the group.

\subsection{Coactions and Landstad duality}\label{subsec-Landstad}

Recall that a (nondegenerate) coaction of $G$ on a \cstar{}algebra $A$ is a \Star{}homomorphism $\delta\colon A\to \M(A\otimes C^*(G))$ 
such that $\cspn\delta(A)(1\otimes C^*(G))=A\otimes C^*(G)$ and which satisfies the identity 
$$(\delta\otimes \id_G)\circ \delta=(\id_A\otimes \delta_G)\circ \delta$$
where the {\em comultiplication}  $\delta_G: C^*(G)\to \M(C^*(G)\otimes C^*(G))$ is the integrated form
of the unitary homomorphism $g\mapsto u_g\otimes u_g$, where $u:G\to U\M(C^*(G))$ denotes the canonical inclusion. 

Given a coaction $(A,\delta)$, we define its {\em crossed product} as
$$A\rtimes_\delta \widehat{G}:=\csp\{\big((\id_A\otimes\lambda)\circ \delta(A)\big)(1\otimes M(C_0(G))\}\subseteq \M(A\otimes \K(L^2(G))$$
and write $j_A:=(\id_A\otimes\lambda)\circ \delta$ and $j_{C_0(G)}=1\otimes M$, where 
$M:C_0(G)\to \B(L^2(G))$ denotes the representation by multiplication operators. 
For a concise survey of the theory of (co)actions and their crossed products we refer to \cite{EKQR}*{Appendix A}. 

If $\rt: G\car C_0(G)$ denotes the action by right translations, then there is a {\em dual action} $\widehat{\delta}:G\car A\rtimes_\delta\widehat{G}$ such that  $\widehat\delta_g\big(j_A(a)j_{C_0(G)}(f)\big)=j_A(a)j_{C_0(G)}(\rt_g(f))$. In particular, we see that 
$j_{C_0(G)}:C_0(G)\to \M(A\rtimes_{\delta}\widehat{G})$ is a nongegenerate $\rt-\widehat\delta$ equivariant \Star{}homomorphism.
Recall from \cite{Nilsen}  that
there is always a canonical surjective \Star{}homomorphism
\begin{equation}\label{eq-bidualmap}\Psi: A\rtimes_{\delta}\widehat{G}\rtimes_{\widehat\delta,\max}G\onto A\otimes \K(L^2(G))
\end{equation}
which is given as the integrated form of 
$\big((\id_A\otimes \lambda)\circ \delta\rtimes (1\otimes M), 1\otimes \rho\big)$ where
$\rho: G\to \U(L^2(G))$ denotes the right regular representation of $G$.
A coaction $\delta$ is called {\em maximal} if $\Psi$ is faithful and it is called {\em normal} (or reduced) 
if  $\Psi$ factors through an isomorphism 
$A\rtimes_{\delta}\widehat{G}\rtimes_{\widehat\delta,r}G\cong A\otimes \K(L^2(G))$. 
In general, there is a unique  
\cstar{}norm $\|\cdot\|_\mu$ on $C_c(G, A\rtimes_{\delta}\widehat{G})$ 
such that $\Psi$ factors through an isomorphism 
\begin{equation}\label{eq-Kat}
\Psi_\mu: A\rtimes_{\delta}\widehat{G}\rtimes_{\widehat\delta,\mu}G\congto A\otimes \K(L^2(G)).
\end{equation}
We then say that $(A,\delta)$ is a {\em $\mu$-coaction}. 

Landstad duality for coaction crossed products asks under which conditions a given system $(B,\beta)$ for an action $\beta:G\car B$ 
coincides with a dual system $(A\rtimes_\delta \widehat{G},\widehat{\delta})$ for some coaction $\delta$ of $G$ on $A$. 
By the construction of crossed products by coactions explained above, 
for this to be true it is necessary that $(B,\beta)$ admits a nondegenerate $\rt-\beta$ equivariant \Star{}homomorphism $\phi:C_0(G)\to\M(B)$ 
 which under the desired isomorphism corresponds to $j_{C_0(G)}:C_0(G)\to \M(A\rtimes_{\delta}\widehat{G})$.
 Following \cite{Buss-Echterhoff:Exotic_GFPA}, we shall call such triple $(B,\beta,\phi)$ a 
 {\em weak} $G\rtimes G$-algebra. 
 \footnote{This is a special case of the weak $X\rtimes G$-algebra for a general proper $G$-space $X$ in \cite{Buss-Echterhoff:Exotic_GFPA}.}

Hence we should ask: given a weak $G\rtimes G$-algebra $(B, \beta, \phi)$ as above,  can we find 
a (unique) cosystem $(A,\delta)$ for $G$ such that 
$$(B, \beta, \phi)\cong (A\rtimes_\delta \widehat{G}, \widehat{\delta}, j_{C_0(G)})\;?$$
In this setting, existence has been shown in general by Quigg in \cite{Quigg}, but
uniqueness cannot hold without further restrictions: counterexamples are given for the dual coactions  
 $(A,\delta):=(B\rtimes_{\beta,\max}G,\widehat{\beta})$ and $(A_r,\delta_r):=(B\rtimes_{\beta, r}G,\widehat{\beta}_r)$
whenever the full and reduced crossed products do not coincide.
 To overcome this uniqueness problem, one needs to restrict to coactions that satisfy Katayama duality with respect to some  fixed (possibly exotic) crossed product $B\rtimes_{\beta,\mu} G$.
 Such a crossed product is said to be \emph{$\widehat\beta$-compatible}, if the dual coaction $\widehat{\beta}$ on $B\rtimes_{\beta,\max}G$ factors through a coaction on the quotient $B\rtimes_{\beta,\mu}G$. Of course, this always holds if the crossed product $B\rtimes_{\beta,\mu}G$ 
 comes from a duality crossed-product functor $\rtimes_\mu$.
The following has been shown  in \cite{Buss-Echterhoff:Exotic_GFPA}*{Theorem 4.3}:
 
 \begin{theorem}[exotic Landstad duality]\label{thm-Landstad}
 Let $(B, \beta, \phi)$ be a weak $G\rtimes G$-algebra and let $\|\cdot\|_\mu$ be any $\widehat{\beta}$-compatible exotic \cstar{}norm on $C_c(G,B)$.
 Then there exists a unique (up to isomorphism) cosystem $(A_\mu,\delta_\mu)$ for $G$ such that 
 $(A_\mu\rtimes_{\delta_\mu}\widehat{G}, \widehat\delta_\mu, j_{C_0(G)})$ is isomorphic to $(B,\beta, \phi)$ and such that 
 $(A_\mu,\delta_\mu)$ satisfies Katayama duality in the sense that the canonical homomorphism
$$\Psi: A_\mu\rtimes_{\delta_\mu}\widehat{G}\rtimes_{\widehat\delta_\mu,\max}G\onto A_\mu\otimes \K(L^2(G))$$
 of~\eqref{eq-bidualmap} factors through an isomorphism 
 $$A_\mu\rtimes_{\delta_\mu}\widehat{G}\rtimes_{\widehat\delta_\mu,\mu}G\cong A_\mu\otimes \K(L^2(G)).\;\footnote{We regard $\|\cdot\|_{\mu}$  as a norm on $C_c(G, A_\mu\rtimes_{\delta_\mu}\widehat{G})$ via the isomorphism $(A_\mu\rtimes_{\delta_\mu}\widehat{G}, \widehat\delta_\mu)\cong (B,\beta)$.}$$
  \end{theorem}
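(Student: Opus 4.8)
The plan is to realize $A_\mu$ as an (exotic) generalized fixed-point algebra for the proper $G$-structure encoded by $\phi$, and then to read off both the coaction $\delta_\mu$ and Katayama duality from the associated Morita equivalence. First I would exploit the fact that the $\rt$-$\beta$ equivariant embedding $\phi\colon C_0(G)\to\M(B)$ makes $(B,\beta)$ into a saturated proper $G$-algebra over the free and proper $G$-space $G$: right translation on $G$ is free and proper, and $\phi$ pulls this structure back to $B$. Applying the exotic generalized fixed-point algebra machinery of \cite{Buss-Echterhoff:Exotic_GFPA}, I would set $A_\mu:=\Fix^\mu(B,\beta,\phi)$, the completion of the $\mu$-integrable elements. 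The $\widehat{\beta}$-compatibility of $\|\cdot\|_\mu$ guarantees that the dual coaction descends to a coaction on $B\rtimes_{\beta,\mu}G$ and that the fixed-point construction is compatible with it.

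The key structural output is a $\mu$-dependent Morita equivalence between $A_\mu$ and $B\rtimes_{\beta,\mu}G$. Because the action is saturated, this equivalence is implemented by a full Hilbert module whose compact operators reproduce $A_\mu$ and which, after stabilization, yields a canonical isomorphism $B\rtimes_{\beta,\mu}G\cong A_\mu\otimes\K(L^2(G))$. The descended dual coaction fixes the canonical copy of $\K(L^2(G))$, so under this isomorphism it takes the form $\delta_\mu\otimes\id_\K$, which extracts the desired coaction $\delta_\mu$ on $A_\mu$.

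It remains to recover $(B,\beta,\phi)$ and to verify Katayama duality. For the recovery I would compute $A_\mu\rtimes_{\delta_\mu}\widehat{G}$ and use that this crossed product depends only on the normalization $(\id\otimes\lambda)\circ\delta_\mu$. Destabilizing the canonical surjection $B\rtimes_{\beta,\mu}G\onto B\rtimes_{\beta,r}G$ identifies the normal Landstad algebra $A_r$ as the normalization of $A_\mu$, so $A_\mu\rtimes_{\delta_\mu}\widehat{G}\cong A_r\rtimes_{\delta_r}\widehat{G}$; classical Landstad duality for the normal coaction then gives $A_r\rtimes_{\delta_r}\widehat{G}\cong B$ together with $\widehat{\delta_\mu}\cong\beta$ and $j_{C_0(G)}=\phi$. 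Katayama duality is now immediate from the construction: the identity $A_\mu\rtimes_{\delta_\mu}\widehat{G}\rtimes_{\widehat{\delta_\mu},\mu}G=B\rtimes_{\beta,\mu}G\cong A_\mu\otimes\K(L^2(G))$ is exactly the stabilized Morita equivalence above. For uniqueness I would run the same fixed-point construction on any competing system: if $(A',\delta')$ satisfies the hypotheses for the same $\mu$, then Katayama duality forces $A'\otimes\K\cong B\rtimes_{\beta,\mu}G\cong A_\mu\otimes\K$ equivariantly, and destabilizing recovers $A'\cong A_\mu$ compatibly with the coactions.

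The hardest part will be the de-stabilization step: showing that the exotic fixed-point Morita equivalence genuinely upgrades to an isomorphism $B\rtimes_{\beta,\mu}G\cong A_\mu\otimes\K(L^2(G))$ rather than a mere equivalence, and that the exotic norm $\mu$ is transported faithfully through this identification. A secondary subtlety is confirming that re-crossing $A_\mu$ by $\widehat{G}$ returns $B$ exactly even though $A_\mu$ varies with $\mu$; this relies on the fact that $\rtimes\widehat{G}$ detects only the ($\mu$-independent) normalization, so no information beyond the normal coaction is needed to reconstruct $B$.
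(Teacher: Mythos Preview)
Your high-level strategy—realize $A_\mu$ as the $\mu$-generalized fixed-point algebra of $(B,\beta,\phi)$ and exploit the resulting $A_\mu$–$B\rtimes_{\beta,\mu}G$ Morita equivalence $\F_\mu(B)$—is exactly the construction the paper imports from \cite{Buss-Echterhoff:Exotic_GFPA}. The gap is in how you propose to extract the coaction $\delta_\mu$.

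You want first to upgrade the Morita equivalence to an isomorphism $B\rtimes_{\beta,\mu}G\cong A_\mu\otimes\K(L^2(G))$ and then read $\delta_\mu$ off from $\widehat{\beta}_\mu$ as $\delta_\mu\otimes\id_\K$. Both steps are problematic. The specific isomorphism $B\rtimes_{\beta,\mu}G\cong A_\mu\otimes\K(L^2(G))$ \emph{is} the Katayama isomorphism once $B$ has been identified with $A_\mu\rtimes_{\delta_\mu}\widehat{G}$; producing it before $\delta_\mu$ exists is circular, and abstract stabilization of a Morita equivalence only yields $A_\mu\otimes\K(\H)\cong(B\rtimes_\mu G)\otimes\K(\H)$ for some unspecified $\H$, not the particular $L^2(G)$ you need. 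More seriously, the bidual coaction transported through the Katayama map is \emph{not} of the form $\delta_\mu\otimes\id_\K$: just as in Imai--Takai duality the bidual action becomes $\alpha\otimes\Ad\rho$ rather than $\alpha\otimes\id$, here the $\K(L^2(G))$ factor is not fixed by $\widehat{\beta}_\mu$, so one cannot simply strip it off to recover $\delta_\mu$.

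The paper avoids this circularity by defining $\delta_\mu$ directly on the dense subalgebra $B_c^G\subseteq\M(B)$ via the explicit formula
\[
\delta_\mu(m)=(\phi\otimes\id)(w_G)\,(m\otimes 1)\,(\phi\otimes\id)(w_G)^*,
\]
and then checking that the inclusion $\iota_\mu\colon B_c^G\hookrightarrow\M(B)$ together with $\phi$ forms a covariant pair for $\delta_\mu$, whose integrated form gives the isomorphism $A_\mu\rtimes_{\delta_\mu}\widehat{G}\cong B$ intertwining $\widehat{\delta_\mu}$ with $\beta$ and $j_{C_0(G)}$ with $\phi$. Equivalently, one equips the bimodule $\F_\mu(B)$ itself with a coaction $\delta_{\F_\mu}$ compatible with $\widehat{\beta}_\mu$ and lets it induce $\delta_\mu$ on $A_\mu=\K(\F_\mu(B))$; this is the correct Morita transport of the coaction, and it does not pass through any tensor decomposition. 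Katayama duality is then a consequence of the construction, not an input. Your uniqueness argument via destabilization is fine once $\delta_\mu$ is in hand, but it cannot substitute for producing $\delta_\mu$ in the first place.
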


For later use, we need to briefly explain the basic steps for the construction of $(A_\mu,\delta_\mu)$. Let
 $(B, \beta, \phi)$ be a weak $G\rtimes G$-algebra. We shall often write 
 $f\cdot b:=\phi(f)b$ and $b\cdot f:=b\phi(f)$ for $f\in C_0(G), b\in \M(B)$ if confusions seems unlikely.
 Consider the dense \Star{}subalgebra $B_c:=\contc(G)\cdot B\cdot \contc(G)\subseteq B$ and define
 \begin{equation}\label{eq-BcG}
 B_c^{G,\beta}:=\{m\in \M(B)^{G,\beta}: f\cdot m, m\cdot f\in B_c\mbox{ for all }f\in \contc(G)\},
 \end{equation}
where $\M(B)^{G,\beta}$  denotes the fixed-point algebra for the extended
action of $G$ on $\M(B)$.
 We call $B_c^{G,\beta}$ (or just $B_c^G$ if $\beta$ is understood)
   the {\em fixed-point algebra with compact supports}.
The authors show in \cite{Buss-Echterhoff:Exotic_GFPA} that $\mathcal F_c(B):=\contc(G)\cdot B$ carries left and right actions of $B_c^{G}$ and $\contc(G,B)$, respectively, and compatible $B_c^{G}$ and $\contc(G,B)$-valued inner products
 given by the formulas
\begin{equation}\label{eq-Morita}
\begin{split}
m\cdot\xi&=m\xi\\
\xi\cdot\varphi&=\int_G\Delta(t)^{-1/2} \beta_t(\xi\varphi(t^{-1}))\dd t\\
{_{B_c^{G}}\lk \xi,\eta\rk}&=\int_G^{st} \beta_t(\xi\eta^*)\dd t\\
\lk \xi,\eta\rk_{\contc(G,B)}(t)&=\Delta(t)^{-1/2}\xi^*\beta_t(\eta),
\end{split}
\end{equation}
where the multiplication on the right hand side in the first line is given by multiplication inside $\M(B)$ 
and the strict integral in the third line is determined by the equation
$$\left(\int_G^{st} \beta_t(\xi\eta^*)\dd t\right)\zeta=\int_G \beta_t(\xi\eta^*)\zeta\dd t$$
for all $\zeta\in \F_c(B)$
(the integral on the right of this equation is then over a compactly supported continuous function).

Given any exotic crossed product \cstar{}norm $\|\cdot\|_\mu$ on the \Star{}algebra $C_c(G,B)$,
 the  $\contc(G,B)$-valued inner product can be regarded as a $B\rtimes_\mu G$-valued
inner product and the module $\F_c(B)$ then completes to  a full $B\rtimes_\mu G$-Hilbert module
$\F_\mu(B)$. Moreover, the left action of $B_c^{G}$ extends to a continuous left action
on $\F_\mu(B)$ with dense image in the compact operators $\K(\F_\mu(B))$.
Thus if  $B_\mu^{G,\beta}$ (or just $B_\mu^G$)  denotes the completion of $B_c^{G}$ with respect
to the operator norm for the module $\mathcal F_\mu(B)$, it follows that $\F_\mu(B)$
becomes a
$$B_\mu^{G}- B\rtimes_{\mu}G$$
equivalence bimodule.

 If $\|\cdot\|_\mu$ is $\widehat{\beta}$-compatible,  then
it is shown in  \cite{Buss-Echterhoff:Exotic_GFPA}*{\S 4} that there is a canonical coaction $\delta_{\F_\mu}$ on the module
$\F_\mu(B)$ which is compatible with $\widehat\beta_\mu$ and therefore induces a coaction
$\delta_{\mu}$ on the $\mu$-generalized fixed-point algebra $A_\mu:=B_\mu^{G}$.
It  is given on the dense subalgebra  $B_c^G$ by the  formula
\begin{equation}\label{eq:def-deltac}
\delta_{\mu}(m)\defeq (j_{\contz(G)}\otimes\id)(w_G)(m\otimes 1)(j_{\contz(G)}\otimes\id)(w_G)^*,\quad m\in B^G_c
\end{equation}
where $w_G\in U\M(C_0(G)\otimes C^*(G))$ is the unitary $w_G=[g\mapsto u_g]$.

The isomorphism $\big(A_\mu\rtimes_{\delta_\mu}\widehat{G}, \widehat{\delta}_\mu, j_{C_0(G)}\big)\cong (B,\beta,\phi)$
is  then given by the integrated form of the covariant pair $(\iota_\mu, \phi): (A_\mu, C_0(G))\to \M(B)$ 
 where $\iota_\mu: A_\mu\to \M(B)$ is given on the dense subalgebra
$B_c^{G}$ via its inclusion $B_c^{G}\into \M(B)$. 
For later use, we state 
 
 \begin{proposition}[{\cite{Buss-Echterhoff:Exotic_GFPA}}]\label{prop-Landstad}
The pair  $(\F_\mu(B), \delta_{\F_\mu})$ is a $\widehat{G}$-equivariant Morita equivalence
for the coactions $(A_\mu, \delta_\mu)$ and $(B\rtimes_\mu G, \widehat\beta_\mu)$. 
\end{proposition}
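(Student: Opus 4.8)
The plan is to produce $\delta_{\F_\mu}$ explicitly on the dense submodule $\F_c(B)=\contc(G)\cdot B$ and then verify, one axiom at a time, that it is a coaction on the imprimitivity bimodule $\F_\mu(B)$ which intertwines $\delta_\mu$ on the left and $\widehat\beta_\mu$ on the right. Guided by the formula~\eqref{eq:def-deltac} for $\delta_\mu$, the natural candidate is the ``one-sided'' version
$$\delta_{\F_\mu}(\xi)\defeq (\phi\otimes\id)(w_G)(\xi\otimes 1),\qquad \xi\in \F_c(B)\subseteq \M(B),$$
where $(\phi\otimes\id)(w_G)\in U\M(B\otimes \Cst(G))$ because $\phi=j_G$ is nondegenerate. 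The bimodule structure and the equivalence $A_\mu\sim_{\F_\mu(B)}B\rtimes_\mu G$ are already in place by the construction of $\F_\mu(B)$ recalled above (following \cite{Buss-Echterhoff:Exotic_GFPA}), and by \cite{Buss-Echterhoff:Exotic_GFPA}*{\S 4} the induced coaction $\delta_\mu$ exists and is $\widehat\beta_\mu$-compatible; so the task reduces to checking the compatibility of $\delta_{\F_\mu}$ with the two module actions and the two inner products, the coaction identity, and nondegeneracy, together with the analytic point of extending $\delta_{\F_\mu}$ continuously to the completion.

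First I would dispatch the left-hand compatibilities, which are essentially formal. For the left action, $\delta_\mu(m)\delta_{\F_\mu}(\xi)=(\phi\otimes\id)(w_G)(m\otimes 1)(\phi\otimes\id)(w_G)^*(\phi\otimes\id)(w_G)(\xi\otimes 1)=(\phi\otimes\id)(w_G)(m\xi\otimes 1)=\delta_{\F_\mu}(m\cdot\xi)$, using $m\cdot\xi=m\xi$ from~\eqref{eq-Morita} and unitarity of $w_G$; the $A_\mu$-valued inner-product equivariance $\delta_\mu({}_{A_\mu}\langle\xi,\eta\rangle)={}_{A_\mu\otimes \Cst(G)}\langle\delta_{\F_\mu}(\xi),\delta_{\F_\mu}(\eta)\rangle$ follows the same way, since ${}_{A_\mu}\langle\xi,\eta\rangle$ is assembled from $\xi\eta^*$ via the strict integral in~\eqref{eq-Morita} and the two $w_G$-factors cancel. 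The coaction identity $(\delta_{\F_\mu}\otimes\id)\circ\delta_{\F_\mu}=(\id\otimes\delta_G)\circ\delta_{\F_\mu}$ reduces to the single relation $(\id\otimes\delta_G)(w_G)=(w_G)_{12}(w_G)_{13}$, which holds because $w_G=[g\mapsto u_g]$ and $\delta_G(u_g)=u_g\otimes u_g$.

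The substantial work lies on the right-hand side and in the passage to the completion. Here I would unwind the dual coaction $\widehat\beta$, which on $\contc(G,B)$ is $\widehat\beta(\varphi)(t)=\varphi(t)\otimes u_t$, and verify both the right-module compatibility $\delta_{\F_\mu}(\xi\cdot\varphi)=\delta_{\F_\mu}(\xi)\cdot\widehat\beta(\varphi)$ and the $B\rtimes_\mu G$-valued inner-product equivariance $\langle\delta_{\F_\mu}(\xi),\delta_{\F_\mu}(\eta)\rangle_{(B\rtimes_\mu G)\otimes \Cst(G)}=\widehat\beta_\mu(\langle\xi,\eta\rangle_{B\rtimes_\mu G})$; unlike the left-hand case these are genuine convolution identities in the variable $t$, which reduce to tracking how the factor $u_t$ produced by $\beta_t$ in the right inner product of~\eqref{eq-Morita} is matched by the $u_t$ inserted by $\widehat\beta$, using the $\rt$-$\beta$ equivariance of $\phi$ that is encoded in $(\phi\otimes\id)(w_G)$. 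I expect the main obstacle to be precisely the right-hand descent to the \emph{exotic} crossed product: the identities above are naturally established first at the level of the maximal crossed product, and one must invoke the $\widehat\beta$-compatibility of $\|\cdot\|_\mu$ to guarantee that $\widehat\beta_\mu$ is a genuine coaction on $B\rtimes_\mu G$ and that $\delta_{\F_\mu}$ descends, together with a boundedness estimate ensuring $\delta_{\F_\mu}$ extends continuously from $\F_c(B)$ to all of $\F_\mu(B)$. Finally, nondegeneracy $\cspn\delta_{\F_\mu}(\F_\mu(B))(1\otimes \Cst(G))=\F_\mu(B)\otimes \Cst(G)$ follows from that of $\widehat\beta_\mu$ together with the right-module identity and fullness of the module. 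A clean way to organize all of this at once is to assemble the data into the linking algebra $L_\mu=\left(\begin{smallmatrix}A_\mu & \F_\mu(B)\\ \F_\mu(B)^* & B\rtimes_\mu G\end{smallmatrix}\right)$ and to check that $\ell\mapsto(\Phi\otimes\id)(w_G)(\ell\otimes 1)(\Phi\otimes\id)(w_G)^*$, for the induced structure map $\Phi$ on $L_\mu$, is a coaction restricting to $\delta_\mu$, $\delta_{\F_\mu}$, and $\widehat\beta_\mu$ in the three corners; the corner-wise verifications above are then exactly the assertion that this linking-algebra map is a well-defined $G$-coaction.
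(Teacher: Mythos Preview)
The paper does not give a proof of this proposition at all: it is stated with an explicit citation to \cite{Buss-Echterhoff:Exotic_GFPA} in the proposition heading and introduced with ``For later use, we state'', so there is nothing to compare your argument against in this paper. Your outline is a plausible reconstruction of a direct verification---defining $\delta_{\F_\mu}$ by the one-sided $w_G$-formula on $\F_c(B)$, checking the left compatibilities formally, reducing the coaction identity to the bicharacter relation for $w_G$, and isolating the genuine work in the right-module and right-inner-product identities together with the descent from $\rtimes_\max$ to $\rtimes_\mu$ via $\widehat\beta$-compatibility---and the linking-algebra repackaging at the end is a standard and efficient way to organize it. One small inaccuracy: you write $\F_c(B)\subseteq\M(B)$, but in fact $\F_c(B)=\contc(G)\cdot B\subseteq B$; this does not affect the argument, since $\xi\otimes 1$ still makes sense in $\M(B\otimes\Cst(G))$, but you should be explicit about in which module $(\phi\otimes\id)(w_G)(\xi\otimes 1)$ is meant to live and why the formula lands in (a dense subspace of) $\F_\mu(B)\otimes\Cst(G)$ rather than merely in a multiplier algebra.
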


The following proposition follows from  the same arguments as given in the proof of \cite{Buss-Echterhoff:Exotic_GFPA}*{Lemma~7.1}.

\begin{proposition}\label{Landstad-functorial}
Suppose that $\rtimes_\mu$ is a crossed-product functor on the category of $G$-\cstar{}algebras. 
Then the assignment $(B, \beta,\phi)\mapsto B^{G,\beta}_\mu=:A_\mu$ as explained above is 
a functor from the category of weak $G\rtimes G$-algebras to the category of \cstar{}algebras.
If, moreover,  $\rtimes_\mu$ is a duality crossed-product functor, then $(B, \beta,\phi)\mapsto (A_\mu,\delta_\mu)$ 
is a functor from the category of  weak $G\rtimes G$-algebras to the category of cosystems for $G$.
\end{proposition}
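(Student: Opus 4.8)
The plan is to build the functor at the level of the Rieffel-type pre\nobreakdash-equivalence bimodules $\F_c(B)$ from Section~\ref{subsec-Landstad} and then transport everything to the generalized fixed-point algebras through the identification $A_\mu=B_\mu^{G,\beta}\cong\K(\F_\mu(B))$ of Proposition~\ref{prop-Landstad}. Throughout I take a morphism of weak $G\rtimes G$-algebras $(B_1,\beta^1,\phi_1)\to(B_2,\beta^2,\phi_2)$ to be a $G$\nobreakdash-equivariant nondegenerate $*$-homomorphism $\Phi\colon B_1\to\M(B_2)$ whose canonical extension $\overline\Phi\colon\M(B_1)\to\M(B_2)$ satisfies $\overline\Phi\circ\phi_1=\phi_2$. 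First I would record that, by naturality of $\rtimes_\mu$ together with nondegeneracy, $\Phi$ induces $\Phi\rtimes_\mu G\colon B_1\rtimes_\mu G\to\M(B_2\rtimes_\mu G)$, and that $\Phi$ maps the dense submodule $\F_c(B_1)=\contc(G)\cdot B_1$ into the multiplier module of $\F_c(B_2)$ via $\phi_1(f)\,b\mapsto\phi_2(f)\,\Phi(b)$, which is legitimate precisely because $\overline\Phi\circ\phi_1=\phi_2$.

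The key computation is that $\Phi$ intertwines the whole bimodule structure \eqref{eq-Morita}. Using only $G$\nobreakdash-equivariance of $\Phi$, the identity $\overline\Phi\circ\phi_1=\phi_2$, and the fact that $\overline\Phi$ commutes with the strict integrals, one checks directly that
\begin{align*}
\lk\Phi\xi,\Phi\eta\rk_{\contc(G,B_2)}&=(\Phi\rtimes_\mu G)\bigl(\lk\xi,\eta\rk_{\contc(G,B_1)}\bigr),\\
{_{B_{2,c}^{G}}\lk\Phi\xi,\Phi\eta\rk}&=\overline\Phi\bigl({_{B_{1,c}^{G}}\lk\xi,\eta\rk}\bigr),
\end{align*}
and that $\Phi(\xi\cdot\varphi)=\Phi(\xi)\cdot(\Phi\rtimes_\mu G)(\varphi)$ and $\overline\Phi(m)\,\Phi(\xi)=\Phi(m\cdot\xi)$ for $m\in B_{1,c}^{G}$. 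The first identity shows $\Phi$ is contractive for the module norms, since $\|\Phi\xi\|^2=\|(\Phi\rtimes_\mu G)(\lk\xi,\xi\rk)\|\le\|\lk\xi,\xi\rk\|=\|\xi\|^2$, so $\Phi$ completes to a morphism of Hilbert modules $\F_\mu(B_1)\to\M(\F_\mu(B_2))$ covering $\Phi\rtimes_\mu G$. The second identity shows $\overline\Phi$ carries $B_{1,c}^{G}$ into $\M(B_2)^{G,\beta^2}$ and sends the generators ${_{B_{1,c}^{G}}\lk\xi,\eta\rk}$ of $A_{1,\mu}$ to the generators ${_{B_{2,c}^{G}}\lk\Phi\xi,\Phi\eta\rk}$ of $A_{2,\mu}$. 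Under $A_\mu\cong\K(\F_\mu(B))$ these generators are exactly the rank-one operators, and the assignment $\theta_{\xi,\eta}\mapsto\theta_{\Phi\xi,\Phi\eta}$ is the $*$-homomorphism induced on compact operators by the module morphism just constructed; hence it extends continuously to a nondegenerate $*$-homomorphism $\Phi_*\colon A_{1,\mu}\to\M(A_{2,\mu})$ agreeing with $\overline\Phi$ on $B_{1,c}^{G}$. Functoriality is then immediate: $\id_*=\id$ and $(\Phi'\circ\Phi)_*=\Phi'_*\circ\Phi_*$ follow from $\Phi_*=\overline\Phi|_{B_{1,c}^{G}}$ and $\overline{\Phi'\circ\Phi}=\overline{\Phi'}\circ\overline\Phi$. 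This settles the first statement, which uses only that $\rtimes_\mu$ is a crossed-product functor.

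For the second statement, assume $\rtimes_\mu$ is a duality functor, so that the dual coactions $\dual{\beta^i}$ descend and formula \eqref{eq:def-deltac} really defines coactions $\delta_{i,\mu}$; I must then check that $\Phi_*$ is a morphism of cosystems, i.e. $(\Phi_*\otimes\id_G)\circ\delta_{1,\mu}=\delta_{2,\mu}\circ\Phi_*$. It suffices to verify this on the dense subalgebra $B_{1,c}^{G}$, where both sides are computed from \eqref{eq:def-deltac} with structure map $j_G=\phi_i$. Applying $\overline\Phi\otimes\id_G$ to the right-hand side of \eqref{eq:def-deltac} for $\delta_{1,\mu}(m)$, and using $(\overline\Phi\otimes\id_G)\bigl((\phi_1\otimes\id_G)(w_G)\bigr)=(\phi_2\otimes\id_G)(w_G)$ — which is just the compatibility $\overline\Phi\circ\phi_1=\phi_2$ — together with $(\overline\Phi\otimes\id_G)(m\otimes1)=\Phi_*(m)\otimes1$, one recovers precisely the expression \eqref{eq:def-deltac} for $\delta_{2,\mu}(\Phi_*(m))$. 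By continuity this extends to all of $A_{1,\mu}$, proving that $(B,\beta,\phi)\mapsto(A_\mu,\delta_\mu)$ is functorial.

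The step I expect to be the main obstacle is the passage to the completions in the second paragraph: verifying that $\overline\Phi$ genuinely induces a \emph{bounded} map on the fixed-point algebras and lands in $\M(A_{2,\mu})$ rather than merely in $\M(B_2)^{G,\beta^2}$. This is where the equivalence-bimodule picture of Proposition~\ref{prop-Landstad} is essential: instead of estimating $\overline\Phi$ directly, one reads the norm on $A_\mu$ as the operator norm on $\K(\F_\mu(B))$ and deduces boundedness from the fact that $\Phi\rtimes_\mu G$ is a $*$-homomorphism and hence contractive. Concretely this is cleanest to organize through the internal tensor product $\F_\mu(B_1)\otimes_{\Phi\rtimes_\mu G}\M(B_2\rtimes_\mu G)$, or equivalently through the morphism of linking algebras $\K\bigl(\F_\mu(B_i)\oplus(B_i\rtimes_\mu G)\bigr)$ induced by $\Phi$, which simultaneously produces $\Phi_*$, $\Phi\rtimes_\mu G$ and the module map and renders all the compatibilities above automatic; this parallels the functoriality results of \cite{Buss-Echterhoff:Exotic_GFPA}*{\S 6}.
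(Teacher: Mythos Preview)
Your approach is essentially the same as the paper's: work at the level of the pre\nobreakdash-equivalence bimodules $\F_c(B_i)$, check that $\Phi$ intertwines the $\contc(G,B_i)$-valued inner products with $\Phi\rtimes G$, use functoriality of $\rtimes_\mu$ to pass to completions, and read off the induced map on $A_{i,\mu}\cong\K(\F_\mu(B_i))$ via rank-one operators; equivariance for the coactions is then checked on $B_c^G$ using formula~\eqref{eq:def-deltac} and the compatibility of the structure maps. The only real difference is your choice of morphisms: you take nondegenerate $\Phi\colon B_1\to\M(B_2)$ with $\overline\Phi\circ\phi_1=\phi_2$, whereas the paper works with (not necessarily nondegenerate) $\Phi\colon B_1\to B_2$ satisfying $\Phi(\phi_1(f)b)=\phi_2(f)\Phi(b)$. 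In the paper's setting $\Phi_\F$ lands directly in $\F_c(B_2)$ and the induced map goes into $A_{2,\mu}$ itself, so the multiplier-module and $\M(A_{2,\mu})$ apparatus you set up is unnecessary there; conversely, your formulation covers the morphisms the paper actually uses later (e.g.\ evaluation maps $q_x$), and your explicit discussion of why the map on fixed-point algebras is bounded---via contractivity of $\Phi\rtimes_\mu G$ and the identification with compact operators---makes transparent a point the paper leaves implicit.
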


\begin{remark}\label{rem-max-normal}
If $\|\cdot\|_\nu$ and  $\|\cdot\|_\mu$ are two $\widehat{\beta}$-compatible exotic \cstar{}norms with
$\|\cdot\|_\mu\geq \|\cdot\|_\nu$ on $C_c(G, B)$, then the identity maps on $B_c^{G}, \mathcal F_c(B)$, and $C_c(G,B)$, respectively, induce 
coaction equivariant quotient maps from the Morita equivalence triple 
$(A_\mu, \F_\mu(B), B\rtimes_{\mu}G)$ onto the  triple $(A_\nu, \F_\nu(B), B\rtimes_\nu G)$. In particular, since for any such  norm we assumed that
$\|\cdot\|_r\leq \|\cdot\|_\mu\leq \|\cdot\|_\max$, we obtain a chain of surjective  equivariant \Star{}homomorphisms
\begin{equation}\label{eq-max}
(A_{\max}, \delta_{\max})\onto (A_\mu,\delta_\mu)\onto (A_r,\delta_r)
\end{equation}
which all descend to the same dual system $(B,\beta)$ and such that 
$(A_\max, \delta_\max)$ satisfies Katayama duality for the maximal norm (hence it is a maximal coaction) and
$(A_r,\delta_r)$ satisfies duality for the reduced norm (hence it is a  normal  coaction).  

On the other hand, if we start with any coaction $\delta:A\to \M(A\otimes C^*(G))$, then
$(B,\beta,\phi):=(A\rtimes_\delta \widehat{G}, \widehat{\delta},j_{C_0(G)})$ is a weak 
$G\rtimes G$-algebra and there exists a unique $\widehat{\beta}=\widehat{\;\widehat\delta\;}$-compatible 
exotic \cstar{}norm $\|\cdot\|_\mu$ on 
$C_c(G, A\rtimes_\delta\widehat{G})$ such that 
$A\rtimes_{\delta}\widehat{G}\rtimes_{\widehat{\delta},\mu}G\cong A\otimes \K(L^2(G))$.
 We therefore
 recover $(A,\delta)$ as $(A_\mu,\delta_\mu)$  and the coactions $(A_{\max},\delta_{\max})$ and $(A_r,\delta_r)$ 
 of \eqref{eq-max} are then called the {\em maximalization} and {\em normalization} of $(A,\delta)=(A_\mu,\delta_\mu)$.
 For details we refer to the discussion before \cite{Buss-Echterhoff:Exotic_GFPA}*{Theorem 4.6}.
 Existence of a maximalization has first been shown in \cite{EKQ}. 
But we should point out, that this norm $\|\cdot\|_\mu$ may not be part of a crossed-product functor for $G$, an assumption we always need for our deformation procedures described below.
 \end{remark}

\section{Deformation by coactions}\label{sec:Landstad}

In this section we discuss our general approach to deformation via coactions.
Our idea is to use Landstad duality for coactions as described in the previous section.
The advantage of this approach is that it incorporates in a ``uniform'' way duality not only for maximal and reduced (or normal) coactions, but also for other ``exotic'' coactions. 

\subsection{Exotic deformation by coactions}\label{sec-deformation}
We now extend the ideas of Kasprzak \cite{Kasprzak:Rieffel} and Bhowmick, Neshveyev, and Sangha \cite{BNS}
to cover coactions which satisfy Katayama duality for a fixed duality crossed-product functor $\rtimes_\mu$ on the category of $G$-\cstar{}algebras.

For this we start with a $\mu$-coaction $\delta:A\to \M(A\otimes C^*(G))$ and let
$(j_A, j_{C_0(G)})$ denote the canonical maps from $(A,C_0(G))$ into $\M(A\rtimes_\delta\hatG)$.
We then write $(B,\beta,\phi):=(A\rtimes_\delta \widehat{G}, \widehat{\delta}, j_{C_0(G)})$ for the corresponding weak $G\rtimes G$-algebra.
In our first approach to deformation of $A$ we now vary the action $\beta$ in these data, that is, we 
define the deformation parameter space for $(B,\beta,\phi)$ as the set $\Hom_\phi(G,B)$ of all actions $\gamma:G\car B$ 
such that $\phi:C_0(G)\to \M(B)$  is $\rt - \gamma$ invariant. 
Then, for each such action $\gamma$ we obtain a {\em deformed coaction}
$(A_\mu^\gamma, \delta_\mu^{\gamma})$ as in Theorem~\ref{thm-Landstad}. 
In particular, we have
$$(A_\mu^{\gamma}\rtimes_{\delta_\mu^{\gamma}}\widehat{G}, \widehat{\delta_\mu^\gamma}, j_{C_0(G)}^{\gamma})\cong (B,\gamma, \phi)$$
for every parameter $\gamma\in \Hom_\phi(G,B)$. 

\begin{remark} Of course, one could also think about varying the parameter $\phi$ in $(B,\beta,\phi)$. But since 
for a fixed action $\gamma:G\car B$ the coactions $(A_\mu^\gamma,\delta_\mu^\gamma)$ are Morita equivalent to the dual coaction
$(B\rtimes_{\gamma,\mu}G, \widehat{\gamma}_\mu)$, this variation would not change the results -- at least up to equivariant Morita equivalence.
We do not know, however, whether a variation of the parameter $\phi$ (with fixed action $\gamma$) could possibly lead to non-isomorphic (but Morita equivalent) cosystems.
We refer to \cite{Buss-Echterhoff:Imprimitivity}*{Proposition 3.12 and Remark 3.13} for a more detailed discussion.
\end{remark}

\subsection{Abadie-Exel deformation}\label{subsec-EA}
 Following ideas of Abadie and Exel from \cite{Abadie-Exel:Deformation}, certain types of actions  in
 $\Hom_{\phi}(G, B)$ for a weak $G\rtimes G$-algebra $(B,\beta,\phi)=(A\rtimes_{\delta}\widehat{G}, \widehat{\delta}, j_{C_0(G)})$
 can be obtained as follows:
 Let $\eta:G\car B$ be any action which commutes with $\beta$ and
 such that $\eta_s(\phi(f))=\phi(f)$ for all $s\in G$ and $f\in C_0(G)$. 
 Then $\gamma_s:=\eta_s\circ \beta_s=\beta_s\circ\eta_s$ is an element of $\Hom_{\phi}(G,B)$ and we 
 can form the deformed cosystems $(A_\mu^{\gamma}, \delta_\mu^\gamma)$ as in our general approach. As before, we assume that $\delta$ is a $\mu$-coaction for a duality crossed-product functor $\rtimes_\mu$ for $G$.
 
We want to relate the actions $\eta:G\car B$ 
 with  actions $\alpha:G\car A$ which commute with the given coaction $\delta$
 in the
 sense that $\delta(\alpha_s(a))=(\alpha_s\otimes \id_G)(\delta(a))$ for all $a\in A$ and $s\in G$.
Given such action $\alpha$, the equation
 \begin{equation}\label{eq:definition-tilde-alpha}
\tilde\alpha_s(j_A(a)j_{C_0(G)}(f)):=j_A(\alpha_s(a))j_{C_0(G)}(f)\quad a\in A, f\in C_0(G)
\end{equation}
 determines an action $\tilde\alpha$ of $G$
 on $B=A\rtimes_{\delta}\hatG$ with the above properties. Indeed, the following result says that these are all actions of this form. 

\begin{lemma}\label{lem-alpha-commute}
An action $\alpha:G\car A$ that commutes with $\delta$ as above induces an action $\eta=\tilde\alpha$ of $G$ on $B=A\rtimes_\delta\dualG$ as in  \eqref{eq:definition-tilde-alpha} which commutes with $\beta=\widehat{\delta}$  and satisfies 
$\eta_s(\phi(f))=\phi(f)$ (for $\phi=j_{C_0(G)}$). 
Conversely, every action $\eta:G\car B$ with these properties is equal to $\tilde\alpha$ for some action  $\alpha:G\car A$ as above.
\end{lemma}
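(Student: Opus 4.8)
The plan is to prove the two implications by rather different means: the forward implication by a direct concrete realization, and the converse by the functoriality of Landstad duality.

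For the forward implication I would avoid checking well-definedness of \eqref{eq:definition-tilde-alpha} by hand, and instead realize $\tilde\alpha_s$ as the restriction of a globally defined automorphism. Recall that $B=A\rtimes_\delta\dualG$ sits inside $\M(A\otimes\K(L^2(G)))$ with $j_A=(\id_A\otimes\lambda)\circ\delta$ and $j_{C_0(G)}=1\otimes M$. I would apply the automorphism $\alpha_s\otimes\id_\K$ of $\M(A\otimes\K(L^2(G)))$ and compute its effect on generators: using the intertwining $(\id_A\otimes\lambda)\circ(\alpha_s\otimes\id_G)=(\alpha_s\otimes\id_\K)\circ(\id_A\otimes\lambda)$ together with the commuting hypothesis $(\alpha_s\otimes\id_G)(\delta(a))=\delta(\alpha_s(a))$ one gets $(\alpha_s\otimes\id_\K)(j_A(a))=j_A(\alpha_s(a))$, while $(\alpha_s\otimes\id_\K)(j_{C_0(G)}(f))=j_{C_0(G)}(f)$ is immediate. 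Hence $\alpha_s\otimes\id_\K$ maps the dense $*$-subalgebra spanned by the products $j_A(a)j_{C_0(G)}(f)$ into itself; being isometric it preserves $B$, and running the same argument for $\alpha_{s^{-1}}$ shows that $\tilde\alpha_s:=(\alpha_s\otimes\id_\K)|_B$ is an automorphism of $B$ satisfying \eqref{eq:definition-tilde-alpha}. The remaining assertions are then routine on the spanning set: the homomorphism property $\tilde\alpha_s\tilde\alpha_t=\tilde\alpha_{st}$ and strong continuity follow from the corresponding properties of $\alpha$ (and isometry of $\tilde\alpha_s$); commutation with $\beta=\widehat\delta$ holds because $\widehat\delta_g$ fixes $j_A(a)$ and sends $j_{C_0(G)}(f)$ to $j_{C_0(G)}(\rt_g(f))$, so both $\tilde\alpha_s\circ\widehat\delta_g$ and $\widehat\delta_g\circ\tilde\alpha_s$ carry $j_A(a)j_{C_0(G)}(f)$ to $j_A(\alpha_s(a))j_{C_0(G)}(\rt_g(f))$; and $\tilde\alpha_s(\phi(f))=\phi(f)$ is built into the definition.

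For the converse I would observe that an $\eta$ with the stated properties is exactly an automorphism of the weak $G\rtimes G$-algebra $(B,\beta,\phi)$: each $\eta_s$ is $\beta$-equivariant because it commutes with $\beta$, and it intertwines $\phi$ with itself since $\eta_s(\phi(f)b)=\phi(f)\eta_s(b)$ by $\eta_s(\phi(f))=\phi(f)$, while its inverse $\eta_{s^{-1}}$ has the same properties. Since $\rtimes_\mu$ is a duality functor, Proposition~\ref{Landstad-functorial} then yields an induced automorphism $\alpha_s:=(\eta_s)_\mu^{G}$ of the cosystem $(A,\delta)=(A_\mu,\delta_\mu)$; in particular $\alpha_s$ commutes with $\delta$, and functoriality gives $\alpha_{st}=\alpha_s\alpha_t$. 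The construction underlying Proposition~\ref{Landstad-functorial} makes $\alpha_s$ agree with $\eta_s$ on the dense subalgebra $B_c^{G,\beta}$, so that $j_A\circ\alpha_s=\eta_s\circ j_A$; evaluating on $j_A(a)j_{C_0(G)}(f)$ and using $\eta_s(\phi(f))=\phi(f)$ gives $\tilde\alpha_s=\eta_s$, whence $\eta=\tilde\alpha$ as required. The continuity of $s\mapsto\alpha_s$ is the one analytic point needing care: it follows from $\alpha_s=\Ad(\eta_s)$ on $A_\mu=\K(\F_\mu(B))$, where $\eta_s$ acts as a strongly continuous group of invertible isometries of the module $\F_\mu(B)$, isometric because $\eta_s\rtimes_\mu G$ is an automorphism of $B\rtimes_\mu G$ and hence preserves the $B\rtimes_\mu G$-valued inner product.

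The step I expect to be the main obstacle is the hidden claim in the converse that $\eta_s$ carries the internal copy of $A$ inside $\M(B)$ onto itself. This is exactly what the functoriality of the generalized fixed-point construction provides: $A$ is recovered as $A_\mu=B_\mu^{G,\beta}$, and $\eta_s$ preserves the defining subalgebra $B_c^{G,\beta}$ precisely because it commutes with $\beta$ and fixes $\phi(C_0(G))$, which is what allows Proposition~\ref{Landstad-functorial} to apply. Should one prefer to bypass the functor, the identity $\delta\circ\alpha_s=(\alpha_s\otimes\id_G)\circ\delta$ can instead be verified directly from formula \eqref{eq:def-deltac}, using that $\eta_s$ fixes $(j_G\otimes\id)(w_G)$.
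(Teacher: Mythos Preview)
Your proof is correct. For the forward implication you take a different route than the paper: instead of checking that $(j_A\circ\alpha_s, j_{C_0(G)})$ is again a covariant pair for $(A,\delta)$ and invoking the universal property of the coaction crossed product to obtain $\eta_s$, you realize $\tilde\alpha_s$ concretely as the restriction of the globally defined automorphism $\alpha_s\otimes\id_\K$ of $\M(A\otimes\K(L^2(G)))$ to $B$. This is more direct and avoids the covariance verification, at the price of tying the argument to the particular spatial model of $A\rtimes_\delta\widehat G$. For the converse your argument is essentially the paper's, just packaged via Proposition~\ref{Landstad-functorial}: the paper does not cite that proposition but inlines its content, directly checking that $\eta_s$ preserves $B_c^{G,\beta}$ and induces a compatible automorphism of the bimodule $\F_c(B)$, hence an isometry for the $\mu$-norm on $A_\mu$, and then reads off $\delta$-equivariance from~\eqref{eq:def-deltac} exactly as you suggest in your closing remark. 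Your explicit verification that the resulting $\alpha$ indeed satisfies $\tilde\alpha=\eta$ is a detail the paper leaves implicit.
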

\begin{proof}
The covariance of $(j_A,j_{C_0(G)})$ implies the equality
$$(j_A\otimes\id)\delta(a)=(j_{C_0(G)}\otimes\id)(w_G)(j_A(a)\otimes 1)(j_{C_0(G)}\otimes \id)(w_G)^*,\quad a\in A.$$
Since $\alpha_t$ commutes with $\delta$, it is straightforward to check  that $(j_A\circ \alpha_t,j_{C_0(G)})$ is also covariant, so there is a unique \Star{}homomorphism $\eta_t\colon B\to B$ satisfying ~\eqref{eq:definition-tilde-alpha}. It follows directly from ~\eqref{eq:definition-tilde-alpha}
and strong continuity of $t\mapsto \alpha_t$ 
that $t\mapsto\eta_t$ is a strongly continuous homomorphism,  that $\eta$ fixes $\phi(f)=j_{C_0(G)}(f)$ for all $f\in C_0(G)$, and 
that it commutes with $\beta=\widehat{\delta}$.

Conversely, if $\eta$ is an action of $G$ on $B$ which commutes with $\beta$,  it follows from the functoriallity of $\rtimes_\mu$ \footnote{Recall that we  assume  $(A,\delta)$ to be a $\mu$-coaction for some duality crossed-product functor $\rtimes_\mu$.} that 
$\eta$ induces a strongly continuous action $\tilde\eta:G\car B\rtimes_\mu G$ via $\varphi\mapsto \eta_t\circ \varphi$ for  $\varphi\in C_c(G,B)$. 
If, moreover,  $\eta_t(\phi(f))=\phi(f)$ for all $f\in C_0(G)$, 
 one easily checks that $\eta_t(B^G_c)\sbe B^G_c$ for all $t\in G$ and it follows from ~\eqref{eq-Morita} that 
$\eta_t$ induces an automorphism $\eta^{\F}_t$ of the $B_c^G-C_c(G,B)$ bimodule $\mathcal F_c(B)$ by  
$$\eta^{\F}_t(\phi(f)b):=\phi(f)\eta_t(b)$$
 for $f\in C_0(G), b\in B$. 
 It hence also preserves the  norm on $A=B_\mu^G\supseteq B_c^G$ and therefore induces a strongly continuous 
  action $\alpha:G\car A$. 
Equation~\eqref{eq:def-deltac}  then implies that $\alpha$ commutes with  $\delta=\delta_\mu$.
\end{proof}

\begin{definition}\label{def-AE-deformation}
Let $(A,\delta)$ be a $\mu$-coaction with respect to a duality crossed-product functor $\rtimes_\mu$ and let $\alpha:G\car A$ be an action that commutes with $\delta$ as above. Let 
$(B,\beta, \phi)=(A\rtimes_\delta\widehat{G}, \widehat{\delta}, j_{C_0(G)})$ and let
$\gamma:=\tilde\alpha\cdot\beta\in \Hom_{\phi}(G,B)$ be the product of the actions  $\tilde\alpha$ and $\beta$ on $B$. 
Then we call $(A^\alpha, \delta^\alpha):=(A^\gamma_\mu, \delta^\gamma_\mu)$ the 
{\em Abadie-Exel deformation} of $(A,\delta)$ with respect to $\alpha$.

\end{definition}

We call this {\em Abadie-Exel deformation}, because it covers the deformation of
cross-sectional algebras of Fell bundles as studied by Abadie and Exel in \cite{Abadie-Exel:Deformation}.
Recall from \cites{Doran-Fell:Representations, Doran-Fell:Representations_2}  that a  Fell bundle $\mathcal A$ over the locally compact group $G$  is a collection of Banach spaces $\{A_s: s\in G\}$ together  with 
a set of pairings (called multiplications) 
 $A_s\times A_t\to A_{st}: (a_s, a_t)\mapsto a_sa_t$  and involutions $A_s\to A_{s^{-1}}; a_s\mapsto a_s^*$ 
which are compatible with the linear structures in the usual sense known from \cstar{}algebras including the condition 
 $\|a_sa_s^*\|=\|a_s\|^2$ for all $a_s\in A_s$.  If $G$ is not discrete, the topological structure of $\mathcal A$ is determined by the set $C_c(\mathcal A)$ of {\em continuous sections $a: s\mapsto a_s\in A_s$ with compact supports in $G$}, and  multiplication and involution on $\A$ are assumed to be continuous. Equipped with a natural convolution and involution, $C_c(\A)$ becomes a \Star{}algebra.
 As for crossed products, there are different (exotic) \cstar{}completions $C_\mu^*(\mathcal A)$ of $C_c(\mathcal A)$ which admit dual coactions 
  \begin{equation}\label{eq:mu-crossed-product-Fell-bundle}
\delta_\mu:C_\mu^*(\A)\to \M(C_\mu^*(\A)\otimes C^*(G))
\end{equation}
and which correspond to a (exotic) crossed product via duality (see \cite{Buss-Echterhoff:Maximality} for details on this).
So we can apply the above procedures to the pair $(A,\delta)=(C_\mu^*(\A), \delta_\mu)$. 

For discrete $G$ and $\|\cdot\|_\mu=\|\cdot\|_{\max}$, Abadie and Exel considered continuous actions $\alpha:G\car \A$ of $G$ by automorphisms of the Fell bundle $\A$, which then 
induce actions $\alpha:G\car C_\mu^*(\A)$ as in Definition~\ref{def-AE-deformation}. 
But different from our approach, Abadie and Exel used the action  
$\alpha:G\car \A$ to directly construct a deformed Fell bundle $\A^\alpha$ over $G$ and used the cross-sectional algebra 
$(C^*_{\mu}(\A^\alpha),\delta_\mu^\alpha)$ to define the deformation of $(A,\delta)$ corresponding to $\alpha$. 
 We shall show in the forthcoming paper \cite{BE:Fellbundles} that our deformation 
 can  be described in a similar way on the level 
of Fell bundles,  which then extends the Abadie-Exel deformation to the case of general locally compact groups and 
 (exotic) \cstar{}completions $C_\mu^*(\A)$ of $C_c(\A)$.

\subsection{Deformation by continuous cocycles}\label{subsec-cont-cocycle}

We now want to explain how we can extend our methods to also cover deformations by 
$2$-cocycles $\om:G\times G\to\T$ on $G$.  We start with the easier case of continuous cocycles, since it fits within the above 
described general approach. The more general setting of deformation by Borel cocycles will require some more refined methods, and 
will be treated in Section~\ref{sec:deformation-twist}.

Let $(B,\beta,\phi)=(A\rtimes_\delta \widehat{G},\widehat{\delta}, j_{C_0(G)})$ be a fixed $\mu$-coaction for a given  
duality crossed-product functor $\rtimes_\mu$. Let $Z_c^2(G,\T)$ denote the set of all continuous $2$-cocycles 
$\om:G\times G\to \T$ on $G$. Recall that the (normalized) cocycle conditions are given by 
\begin{equation}\label{eq-2cocycle}
\om(s,e)=\om(e,s)=1\quad\text{and}\quad \om(s,t)\om(st,r)=\om(s,tr)\om(t,r)\quad \forall s,t,r\in G.
\end{equation}

Starting with $(B,\beta,\phi)$ as above,  for each $\om\in Z_c(G,\T)$ we can construct a deformed action  $\beta^\om:G\car B$ as follows.
For $s\in G$, let $u_\om(s)\in C_b(G,\T)=U\M(C_0(G))$ be given as
\begin{equation}\label{eq-uomega}
u_\om(s)(r)=\overline{\om(r,s)}.
\end{equation}
A short computation using \eqref{eq-2cocycle} shows that  $u_\om$ is a \emph{$\om$-twisted $1$-cocycle} for $\rt:G\car C_0(G)$ 
in the sense that 
\begin{equation}\label{eq-exterior1}
u_\om(st)={\om(s,t)}u_\om(s)\rt_s(u_\om(t))\quad \forall s,t\in G.
\end{equation}
 Let $U_\om:G\to U\M(B)$ be defined by
 $U_\om(s)= \phi(u_\om(s))$.
 Since $\phi$ is $\rt-\beta$ equivariant, we immediately see that 
\begin{equation}\label{eq-exterior}
U_\om(st)={\om(s,t)}U_\om(s)\beta_s(U_\om(t))\quad \forall s,t\in G.
\end{equation}
Hence $U_\om$ is an $\om$-twisted $1$-cocycle for $\beta$ which then implies that
$$\beta^\om:G\car B; \beta^\om(s)=\Ad U_\om(s)\circ \beta(s) $$
is a new action of $G$ on $B$.
The fact that $\phi: C_0(G)\to \M(B)$ is  $\rt-\beta^\om$ equivariant follows from the equation
 $$\beta^\om_s(\phi(f))=\phi(u_\om(s))\beta_s(\phi(f))\phi(u_{\om}(s)^*)=\phi(u_{\om}(s) \rt_s(f)u_\om(s)^*)=\phi(\rt_s(f))$$
 by  $\rt-\beta$-equivariance of $\phi$ and commutativity of $\M(C_0(G))$. We now define the {\em $\om$-deformation} of $(A,\delta)$ 
 as the cosystem 
 $(A^\om_\mu, \delta^\om_\mu)$ corresponding to the weak $G\rtimes G$-algebra $(B, \beta^\om,\phi^\om)$ and $\rtimes_\mu$ as in 
 Theorem~\ref{thm-Landstad}.

 Of course, if $\rtimes_\mu=\rtimes_r$ is the reduced crossed-product functor, this gives the 
 deformation by continuous cocycles as defined in \cite{BNS}. 
Unfortunately, the above construction only works for continuous cocycles, while in general one needs to consider
 Borel cocycles. Covering Borel cocycles in the reduced case has been the major effort in \cite{BNS}. 

 \begin{remark}\label{rem-twisted} Let  $\T\into G_\om\onto G$ denote the central extension of 
 $G$ corresponding to $\om$ as discussed in Section~\ref{sec:twists} below. Let $(\beta,\iota^\om)$ be the Green-twisted action 
of the pair $(G_\om,\T)$, where we identify $\beta$ with its inflation to $G_\om$ and let $\iota^\om:\T\to U\M(B); z\mapsto z1_{\M(B)}$.
Let  $\tilde{U}_\om:G_\om\to U\M(B)$ be defined by $\tilde{U}_\om(g,z):=\bar{z}U_\om(g)$. Then 
$\tilde{U}_\om$ determines an exterior equivalence between $(\beta, \iota^\om)$ and the twisted pair 
$(\beta^\om, 1_{\T})$ as in \cite{Ech:Morita}*{p.~175},
and thus an exterior equivalence between the twisted action $(\beta,\iota^\om)$ and the ordinary $G$-action $\beta^\om$ in the sense of
\cite{Ech:Morita}. In particular, it follows that for every correspondence crossed-product functor $\rtimes_\mu$ we have
$B\rtimes_{(\beta,\iota^\om),\mu}G\cong B\rtimes_{\beta^\om,\mu}G$.
\end{remark}

\section{Group twists}\label{sec:twists}
In this section we introduce the notion of twists and their relation to actions on compact operators for locally compact groups, which will serve as the parameters in our approach to deformation by $2$-cocycles. 
Most results in this section are well known to the experts and go back to work of Mackey \cite{Mackey}, Moore \cite{mooreI},  Kleppner \cite{Klep:cont}, and the work on equivariant Brauer groups intiated by Crocker, Kumjian, Raeburn, and Williams in \cite{CKRW} (see also \cite{KMRW}). The results are generally well documented in the case of second countable groups $G$, but the documentation for non-second countable groups is sometimes rather weak, so 
we go into some detail below in order to set up notation, introduce some important constructions, and to clarify the results 
for general (possibly non-second countable) locally compact groups $G$.

By a {\em twist} $\sigma$ for $G$, we understand a central extension of locally compact groups
$$\sigma:=(\T\stackrel{\iota}{\into} G_\sigma\stackrel{{q}}{\onto} G)$$
of $G$ by the circle group $\T$. We say that two such twists $\sigma=(\T\stackrel{\iota}{\into} G_\sigma\stackrel{{q}}{\onto} G)$
and $\sigma'=(\T\stackrel{\iota'}{\into} G_{\sigma'}\stackrel{{q'}}{\onto} G)$ are {\em isomorphic} if there is an isomorphism $\varphi:G_\sigma\to G_{\sigma'}$ of topological groups 
which induces the identity maps on $G$ and $\T$. 
In what follows we shall write $[\sigma]$ for the isomorphism class of the twist $\sigma$.

\begin{remark}\label{rem-iso}
If $\sigma$ and $\sigma'$ are two twists for $G$ and if $\varphi:G_\sigma\to G_{\sigma'}$
is a continuous and bijective homomorphism which implements the identity maps on $G$ and $\T$, then 
$\varphi: G_\sigma\to G_{\sigma'}$ is already a topological isomorphism. This can be deduced from Gleason's theorem \cite{Gleason}*{Theorem 4.1}, which implies that both central extensions admit local continuous sections.
\end{remark} 

Note that the set $\Twist(G):=\{[\sigma]: \sigma \;\text{is a twist of $G$}\}$
has the structure of an abelian group with respect to the Baer multiplication defined as follows:
If $\sigma=(\T\stackrel{\iota}{\into} G_\sigma\stackrel{q}{\onto} G)$ and $\sigma'=(\T\stackrel{\iota'}{\into} G_{\sigma'}\stackrel{q'}{\onto} G)$ 
are  twists for $G$, we  define $G_\sigma*G_{\sigma'}$ as
the quotient $(G_\sigma\times_GG_{\sigma'})/\T$, where 
\begin{equation}\label{eq-productoftwists}
G_\sigma\times_G G_{\sigma'}:=\{(\tilde{g}, g')\in G_\sigma\times G_{\sigma'}: \tilde{q}(\tilde{g})=q'(g')\}
\end{equation}
denotes the fibred product of $G_\sigma$ with $G_{\sigma'}$ over $G$, and the action of $\T$ on $G_\sigma\times_GG_{\sigma'}$
is given by $z\cdot(\tig, g'):=(\bar{z}\tig, {z} g')$ (which is short for $(\iota(\bar{z})\tig, \iota'(z)g')$). It is not difficult to check that 
$$\sigma\sigma':=(\T\stackrel{\tilde\iota}{\into} G_\sigma*G_{\sigma'} \stackrel{\tilde{q}}{\onto} G)$$
with $\tilde\iota(z)=[(\tilde{e},z)]=[(z,e')]$ and $\tilde{q}([\tig, g'])=\tilde{q}(\tig)=q'(g')$
is a central extension of $G$ by $\T$. 
Here $\tilde{e}$ resp.~$e'$ denote the units of $G_\sigma$ resp.~$G_{\sigma'}$.
Then $[\sigma][\sigma']:=[\sigma\sigma']$
induces a well-defined multiplication on $\Twist(G)$ with inverse $[\sigma]^{-1}=[\bar\sigma]$ where
\begin{equation}\label{eq-inversetwist}  \bar\sigma:=(\T\stackrel{\bar{\iota}}{\into} G_\sigma \stackrel{{q}}{\onto}G),
\end{equation}
with $\bar{\iota}(z):=\iota(\bar{z})$ for all $z\in \T$.
Of course, these are standard operations in the theory of group extensions.

It follows from \cite{FG}*{Theorem~1} that every twist $\sigma=(\T\into G_\sigma\stackrel{q}{\onto} G)$ 
admits a Borel section 
$\mathfrak{s}:G\to G_\sigma$ for the quotient map $q$ which sends the unit of $G$ to the unit of $G_\sigma$. 
Then the map
\begin{equation}\label{eq-cocycle}
(g,h)\mapsto \omega(g,h):=\mathfrak{s}(g)\mathfrak{s}(h)\mathfrak{s}(gh)^{-1}
\end{equation}
defines a $\T$-valued Borel map which satisfies the cocycle identities \eqref{eq-2cocycle} and
hence an element $\om=:\om_{\sigma}\in Z^2(G,\T)$ 
and  a class $[\om_{\sigma}]\in H^2(G,\T)$, 
the Borel group cohomology with coefficients in the trivial $G$-module $\T$. It is well known (and easy to check) 
that the class $[\om_\sigma]\in H^2(G,\T)$ only depends on the class $[\sigma]\in \Twist(G)$ and not on the representative $\sigma$ 
or the Borel section $\mathfrak{s}$ (although the cocycle $\om_\sigma\in Z^2(G,\T)$ clearly depends on these choices).
We therefore obtain a well-defined map
\begin{equation}\label{eq-twist-to-cocycle}
    \Psi: \Twist(G)\to H^2(G,\T); [\sigma]\mapsto [\om_{\sigma}].
\end{equation}
It is not difficult to check that $\Psi$ is multiplicative if the product in $H^2(G,\T)$ is induced by the pointwise product of cocycles.
It follows from the work of Mackey \cite{Mackey} and Moore \cite{mooreI}, that
$\Psi:\Twist(G)\to H^2(G,\T)$ is an isomorphism of groups for second countable $G$. 
But work of Kleppner \cite{Klep:cont} shows that 
the result still holds for general locally compact groups. 
Indeed, to construct an inverse for the map 
$\Psi$ in \eqref{eq-twist-to-cocycle}, let $\om\in Z^2(G,\T)$ be fixed. 
Let $G_\om$ be the space $G\times \T$ equipped with the product
\begin{equation}\label{eq-Gom}
    (g,z)(h,w)=(gh, \om(g,h)zw).
\end{equation}
Equipped with the Borel structure from $G\times \T$, $G_\om$ becomes a Borel group.
If $\om$ is a {\em normalized} cocycle (i.e., if $\om(g,g^{-1})=1$ for all $g\in G$), then 
Kleppner shows in \cite{Klep:cont}*{p.~218} that there exists a locally compact topology on $G_\om$ 
such that the obvious sequence 
\begin{equation}\label{eq-sigmaom}
\sigma_\om:=(\T\into G_\om\onto G)
\end{equation}
is a twist for $G$ and such that
$\mathfrak{s}:G\to G_\om; g\mapsto (g, 1)$ becomes a Borel section. 
To see that the same construction works for general 
Borel cocycles $\om\in Z^2(G,\T)$ note first that every  cocycle $\om$ is equivalent to a normalized 
one, i.e., there exists a Borel map $f:G\to \T$ with $f(e)=1$ such that 
$$\om'(g,h)=f(g)f(h)f(gh)^{-1}\om(g,h)$$
is a normalized cocycle (e.g., see \cite{Klep:cont}*{p.~215}). We then obtain a Borel isomorphism
$$ \Phi :G_{\om'}\to G_{\om},\quad \Phi(g,z)=(g, f(g) z).$$
Equip $G_{\om}$ with the topology which makes this a topological isomorphism.
Then it follows from \cite{Klep:aut}*{Theorem 1} that this is the unique locally compact topology on $G_\om$
with the product Borel structure on the underlying space $G\times \T$. 
A short computation shows that $\om$ is precisely the cocycle induced from the section $\mathfrak{s}:G\to G_\om:g\mapsto (g,1)$ as in 
\eqref{eq-cocycle}.
Conversely, if $\sigma=(\T\stackrel{\iota}{\into} G_\sigma\stackrel{{q}}{\onto} G)$
is a twist for $G$, $\mathfrak{s}:G\to G_\sigma$ a Borel section and $\om\in Z^2(G,\T)$ as in 
\eqref{eq-cocycle}, then $\Phi:G_\om\to G_\sigma; (g,z)\mapsto \mathfrak{s}(g){\iota}(z)$ is an isomorphism 
of extensions. Thus $\Psi:\Twist(G)\to H^2(G,\T)$ is  an isomorphism of groups.

It is useful to recall the following result due to Mackey (for second countable $G$) and Kleppner (for the general case).
To fix notation, if $\om\in Z^2(G,\T)$  then a strongly measurable 
map $V:G\to \U(\H)$ is called a (projective) $\om$-representation, if 
\begin{equation}\label{eq-omega-rep}
V_gV_h=\om(g,h) V_{gh}\quad\forall g,h\in G.
\end{equation}
The following result is a consequence of \cite{Klep:cont}*{Theorem 1}:

\begin{proposition}\label{prop-Kleppner}
Let $\om\in Z^2(G,\T)$ be a Borel cocycle on $G$ and let $\H$ be a Hilbert space. 
There exists a one-to-one correspondence between 
\begin{enumerate}
 \item[(i)] strongly continuous unitary representations $\tilde{V}:G_\om\to \U(\H)$ satisfying 
 $\tilde{V}_{(g,z)}=z\tilde{V}_{(g,1)}$ for all $(g,z)\in G_\om$; and
    \item[(ii)] strongly measurable $\om$-representations $V:G\to \U(\H)$.
\end{enumerate}
Moreover, each strongly measurable $\om$-representation $V:G\to \U(\H)$ as in (ii) determines a continuous 
action $\alpha=\Ad V: G\car \K(\H)$.
\end{proposition}

Of course, if $\tilde{V}:G_\om\to \U(\H)$ is a  representation as in (i), then the corresponding 
$\om$-representation is given by $V_g:=\tilde{V}_{(g,1)}$ for $g\in G$. 
Conversely, if $V:G\to \U(\H)$ is an $\om$-representation, then the corresponding unitary representation 
$\tilde{V}:G_\om\to \U(\H)$ is given by $\tilde{V}_{(g,z)}=zV_g$ for each $(g,z)\in G_\om$.

\begin{remark}
For technical reasons it will be better for our purposes to work with actions $\alpha = \Ad V$, where $V:G\to \U(\H)$ 
is a strongly measurable $\bar{\om}$-representation with $\bar{\om}(g,h)=\overline{\om(g,h)}$ for all $g,h\in G$. 
An easy alteration of the above proposition shows that such representations 
are in a one-to-one correspondence with unitary representations $\tilde{V}:G_{\om}\to \U(\H)$ such that $\tilde{V}(g,z)=\bar{z}\tilde{V}(g,1)$ for all $(g,z)\in G_\om$.
\end{remark}

We now recall the Brauer group 
$\brg$ of all Morita equivalence classes of actions of $G$ on some algebra $\K(\H)$ of compact operators on a Hilbert space $\H$
(see \cites{Raeburn-Williams:Morita_equivalence,Dana:book, CKRW}). 
If $\alpha:G\car \K(\H)$ and $\beta:G\car \K(\H')$ are two actions, and if $[\alpha]$ and $[\beta]$ denote their Morita equivalence classes, then the product $[\alpha][\beta]$ in $\Br(G)$ is represented by the diagonal action 
$\alpha\otimes\beta:G\car\K(\H)\otimes \K(\H')\cong \K(\H\otimes\H')$. Of course,  the unit element is represented by the trivial action. 

In what follows below, if $\X$ is an $A-B$ equivalence bimodule, we denote by $\X^*$ the 
$B-A$ equivalence bimodule conjugate to $\X$. Recall that
$\X^*:=\{\xi^*: \xi\in \X\}$ equipped with the inner products 
\begin{equation}\label{eq-dual-inner-product}
    _B\braket{\xi^*}{\eta^*}:=\braket{\xi}{\eta}_B\quad\text{and}\quad \braket{\xi^*}{\eta^*}_A:
    ={_A}\braket{\xi}{\eta}
\end{equation}
and left and right actions of $b\in B$ and $a\in A$ given by
\begin{equation}\label{eq-action-dual}
b\xi^*:=(\xi b^*)^*\quad\text{and}\quad \xi^*a:=(a^*\xi)^*.
\end{equation}
In particular, if $\H$ is a Hilbert space, viewed as a $\K(\H)-\C$ equivalence bimodule, 
then $\H^*$ is a $\C-\K(\H)$ equivalence bimodule.

The following theorem is well known in case 
of second countable groups and separable Hilbert spaces $\H$ (e.g., see \cite{CKRW} and the introduction of \cite{KMRW}). 
But the result holds in complete generality:

\begin{theorem}\label{thm-twist-H2-Br}
Let $G$ be a locally compact group. Then there exists a canonical isomorphism 
 $\Phi:H^2(G,\T)\stackrel{\cong}{\to} \Br(G)$
 given by sending a class $[\om]\in H^2(G,\T)$ to the class $[\alpha=\Ad V]\in \Br(G)$, where 
$V:G\to\U(\H)$ is any strongly measurable $\bar\om$-representation of $G$, with $\bar\om(g,h)=\overline{\om(g,h)}$. 
As a consequence, we obtain a chain of isomorphisms
$$\Twist(G)\stackrel{\Psi}{\to} H^2(G,\T)\stackrel{\Phi}{\to} \Br(G)$$
with $\Psi:\Twist(G)\to H^2(G,\T)$ as in \eqref{eq-twist-to-cocycle}. 
\end{theorem}
\begin{proof}
We already know that $\Psi:\Twist(G)\to H^2(G,\T)$ is an isomorphism. To show that $\Phi:H^2(G,\T)\to \Br(G)$ is well defined, 
observe that if $[\om]=[\om']$ and  $V:G\to \U(\H)$ and $W:G\to \U(\H')$ are 
    strongly measurable $\bar\om$- and $\bar\om'$-representations, respectively, then the actions 
    $\Ad V$ and $\Ad W$ are $G$-equivariantly Morita equivalent. For this let $f:G\to \T$ be a Borel map such 
    that $\om=\partial( f) \om'$. Then $W':g\mapsto \overline{f(g)}W_g$ becomes an $\bar\om$-representation such that 
    $\Ad W'=\Ad W$.
    Thus we may assume without loss of generality that $\om=\om'$. 
    It is then easy to check that
    \begin{equation}
        \gamma:G\car \H'\otimes_{\C} \H^*;\quad \gamma_g(\eta\otimes \xi^*):=(W_g\eta)\otimes (V_g\xi)^*
    \end{equation}
    defines an action of $G$ which implements an $\Ad W-\Ad V$-equivariant Morita equivalence.
Moreover, if $[\om],[\om']\in H^2(G,\T)$ and if $\alpha=\Ad V$ for an $\bar\om$-representation $V:G\to \U(\H)$ and $\beta =\Ad W$ for some $\bar\om'$-representation 
$W:G\to\U(\H')$, then $V\otimes W\colon G\to \U(\H\otimes \H')$ is an $\overline{\om\om'}$-representation such that $\alpha\otimes\beta=\Ad(V\otimes W)$. 
Thus $\Phi:H^2(G,\T)\to \Br(G)$ is multiplicative. 

To show that $\Phi$ is bijective, we now construct an inverse map $\Theta:\Br(G)\to \Twist(G)$ for $\Phi\circ \Psi$.
Suppose that 
 $\alpha:G\car \K(\H)$ is an action.  We obtain a central extension 
$$\sigma_\alpha:=(\T\stackrel{\iota_\alpha}{\into} G_\alpha\stackrel{q_\alpha}{\onto} G)$$
with group $G_\alpha$ defined by 
\begin{equation}\label{eq-Galpha}
G_{\alpha}=\{(g,v)\in G\times \U(\H): \alpha_g=\Ad v\},
\end{equation}
and with the  quotient map $q_\alpha:(g,v)\mapsto g$ and inclusion map $\iota_\alpha: z\to (e, \bar{z}1_{\H})$. \footnote{It is important for later use that we choose the inclusion $z\mapsto \bar{z}1_{\H}$ and not the somewhat more natural inclusion $z\mapsto z1_{\H}$, which would give us the inverse of our extension $G_\alpha$!}
 
We need to show that, if $\alpha:G\car \K(\H_\alpha)$ 
and $\beta:G\car \K(\H_\beta)$ are Morita equivalent actions, then $\sigma_\alpha\cong \sigma_\beta$.
To see this, let $(\X,\gamma)$ be an $\alpha-\beta$ equivariant Morita equivalence between
$\K_\alpha:=\K(\H_\alpha)$ and $\K_\beta:=\K(\H_\beta)$. Let 
$$\theta:=\left(\begin{matrix}\alpha&\gamma\\ \gamma^*&\beta\end{matrix}\right):G\curvearrowright 
L(\X)=\left(\begin{matrix} \K_\alpha & \X\\ \X^* & \K_\beta\end{matrix}\right).$$
Let $\U(L(\X))$ denote the unitary group of the multiplier algebra $\M(L(\X))$.
Let $G_{\theta}:=\{(g, v)\in G\times \U(L(\X)): \theta_g=\Ad v\}$.
Let $p=\left(\begin{smallmatrix}1&0\\0&0\end{smallmatrix}\right)$ be the full projection such that 
$\K_\alpha=p L(\X)p$. We claim that 
$$\psi_\alpha: G_{\theta}\to G_{\alpha}: \psi_\alpha(g,v)=(g, pvp)$$
is an isomorphism of central extensions, and similarly for 
$$\psi_\beta: G_{\theta}\to G_{\beta}: \psi_{\beta}(g,v)=(g, (1-p)v(1-p)).$$
Indeed, if we write $v=\left(\begin{matrix} v_{11}& v_{12}\\ v_{21}& v_{22}\end{matrix}\right)\in \U(L(\X))$ such that $\Ad v=\theta_g$, then, extending $\theta_g$ to $\M(L(\X))$ and 
applying it to $p=\left(\begin{smallmatrix}1&0\\0&0\end{smallmatrix}\right)$, we see that
$$\left(\begin{matrix}1_{\H_\alpha}&0\\0&0\end{matrix}\right)=\left(\begin{matrix}\alpha_g(1_{\H_\alpha})&0\\0&0\end{matrix}\right) =\theta_g(p)=vpv^*=\left(\begin{matrix} v_{11}v_{11}^*&0\\0&0\end{matrix}\right) $$
and similarly, $1_{\H_\beta}=v_{22}v_{22}^*$. Applying the same to $\Ad v^*= \theta_{g^{-1}}$, we see that 
$pvp=v_{11}$ and $(1-p)v(1-p)=v_{22}$ are unitaries in $\U(\H_\alpha)$ and $\U(\H_\beta)$, respectively, with $\alpha_g=\Ad v_{11}$ and $\beta_g=\Ad v_{22}$.
Moreover, for each $x\in \X$ the equation 
$$\left(\begin{matrix}0&\gamma_g(x)\\0&0\end{matrix}\right)=
\theta_g\left(\begin{matrix} 0&x\\0&0\end{matrix}\right)=
v\left(\begin{matrix} 0&x\\0&0\end{matrix}\right)v^*=\left(\begin{matrix} 0&v_{11}xv_{22}^*\\0&0\end{matrix}\right) $$
shows that the matrix $\tilde v:=\left(\begin{matrix} v_{11}& 0\\0&v_{22}\end{matrix}\right)$ defines a  unitary  $\tilde{v}\in \U(L(\X))$ such that $\theta_g=\Ad\tilde{v}$. But then there must exist $z\in \T$ such that $\tilde{v}=zv$. This is only possible for $\tilde{v}=v$.
 Having this, it now easily follows that 
$\psi_\alpha$ and $\psi_\beta$ are indeed isomorphisms of central extensions.  
Thus, we  obtain a well-defined map $\Theta:\Br(G)\to \Twist(G)$ by sending a class $[\alpha]\in \Br(G)$ to the class 
$[\sigma_\alpha]\in \Twist(G)$. 

To see that it is an inverse for  $\Phi\circ \Psi$ we also
want to give a direct construction for the composition $\Phi\circ \Psi:\Twist(G)\to \Br(G)$.
For this let $\sigma=(\T\stackrel{\iota}{\into}G_\sigma\stackrel{q}{\onto} G)$.
Let $L^2(G_\sigma, \iota)$ be the Hilbert space completion of
\begin{equation}\label{Ccbariota}
C_c(G_\sigma,\iota):=\{\xi\in C_c(G_\sigma): \xi(\tilde{g}z)=\bar{z}\xi(\tilde{g})\;\forall \tig\in G_\sigma, z\in \T\}
\end{equation}
with respect to the inner product 
\begin{equation}\label{eq-innerproduct}
\braket{\xi}{\eta}:=\int_G \overline{\xi(\tig)}\eta(\tig)\,dg   \quad \text{(with $g=q(\tig)$).}
\end{equation}
Note that it follows from the definition of $C_c(G_\sigma,\iota)$ that the 
integrand in \eqref{eq-innerproduct} is constant on $\T$-cosets, and hence defines a function in $C_c(G)$. We define a strongly continuous representation 
$\tilde\rho: G_\sigma\to \U(L^2(G_\sigma,\iota))$ by
\begin{equation}\label{eq-reptilderho}
\big(\tilde\rho(\tig)\xi\big)(\tilde{s})=\Delta(g)^{\frac{1}{2}}\xi(\tilde{s}\tig).
\end{equation}
The action $\tilde\alpha:=\Ad \tilde\rho:G \car \K(L^2(G_\sigma, {\iota}))$ is constant on $\T$-cosets and therefore factors through an action $\alpha:G\car \K(L^2(G_\sigma,\iota))$.

We claim that $[\alpha]=\Phi\circ\Psi([\sigma])$. Indeed, if $\mathfrak{s}:G\to G_\sigma$ is any Borel section and if 
$\om$ is the corresponding cocycle as in \eqref{eq-cocycle}, then a short computation shows that 
$\tilde\rho\circ \mathfrak{s}$ is a strongly measurable $\bar\om$-representation with $\alpha=\Ad(\tilde\rho\circ \mathfrak{s})$. This proves the claim. 

We now  
observe that if $\alpha:G\car \K(L^2(G_\sigma,\iota))$
    is the action constructed from $\sigma$ as above, then, using Remark~\ref{rem-iso}, it is easy to check that
    $$\varphi: G_\sigma\to G_{\alpha}: \tig\mapsto (g, \tilde\rho(\tig))$$
    is a continuous isomorphism of central extensions.  
    This shows that $\Theta\circ\Phi\circ\Psi$ is the identity on $\Twist(G)$.
Conversely, if $\alpha:G\car \K(\H)$ is an action and $\sigma_\alpha:=(\T\into G_\alpha\onto G)$ is the twist as in \eqref{eq-Galpha}, then
the projection $\tilde{V}:G_\alpha\to\U(\H); (g,v)\mapsto v$
is a strongly continuous unitary representation  such that $\Ad \tilde{V}$ factors through $\alpha$ on $G$.
Moreover, we have $\tilde{V}_{(g,v)z}=\tilde{V}_{(g, \bar{z}v)}=\bar{z}\tilde{V}_{(g,V)}$ for all $z\in \T$. 

This implies that, for any Borel section $\mathfrak{s}:G\to G_{\alpha}$ with corresponding cocycle $\om\in Z^2(G,\T)$ as in \eqref{eq-cocycle}, the 
composition $V:=\tilde{V}\circ \mathfrak{s}$ is 
a strongly measurable $\bar\om$-representation which implements $\alpha$. 
Hence $\Phi\circ\Psi\circ \Theta$ is the identity on $\Br(G)$.
\end{proof}

 \begin{remark}\label{right-regular}
    Let $\mathfrak{s}:G\to G_\sigma$ be a Borel section for the extension $\sigma=(\T\into G_\sigma\onto G)$ with 
    corresponding cocycle $\om\in Z^2(G,\T)$ as in \eqref{eq-cocycle}, and let $L^2(G_\sigma,\iota)$ be as in the above proof. 
    We then
    obtain an isomorphism of Hilbert spaces 
    $L^2(G_\sigma,\iota)\stackrel{\cong}{\to} L^2(G);\xi\mapsto \xi\circ \mathfrak{s}$.
    The composition $\tilde\rho\circ \mathfrak{s}: G\to \U(L^2(G_\sigma,\iota))$ then transforms to the 
    right regular $\bar\om$-representation $\rho^{\bar\om}:G\to \U(L^2(G))$ given by the formula
$$\big(\rho^{\bar\om}(g)\xi\big)(h)=\Delta(g)^{\frac{1}{2}}\bar{\om}(h,g)\xi(hg).$$
    We leave the straightforward computations to the reader.
\end{remark}

For later use we want to point out the following direct relation  between an element
$[\sigma]\in \Twist(G)$ and the corresponding class $[\alpha]\in \Br(G)$.

\begin{corollary}\label{cor-twist-action}
    Let $\sigma=(\T\into G_\sigma\onto G)$ be a twist for $G$ and let  $\alpha:G\car \K(\H)$ be an action. Then 
     $[\alpha]=\Phi\circ\Psi([\sigma])$  with
    $\Phi\circ \Psi:\Twist(G)\to \Br(G)$ as in Theorem~\ref{thm-twist-H2-Br}
    if and only if $\alpha=\Ad \tilde{V}$ for some 
    strongly continuous unitary representation $\tildeV:G_\sigma\to \U(\H)$ such that
    $\tilde{V}_{\tig z}=\bar{z}\tilde{V}_{\tig}$ for all $\tig\in G_\sigma$ and $z\in \T$.
    \end{corollary}
\begin{proof}
First of all, as observed at the end of the proof of Theorem~\ref{thm-twist-H2-Br}, if $\tilde{V}:G_\sigma\to \U(\H)$ is 
as in the statement of the corollary, and if $\mathfrak{s}:G\to G_\sigma$ is any Borel 
section with corresponding cocycle $\om\in Z^2(G,\T)$, then $V:=\tilde{V}\circ\mathfrak{s}$ is a strongly measurable $\bar\om$-representation 
such that $\alpha=\Ad V$. Hence $[\alpha]=\Psi\circ\Phi([\sigma])$. 

Conversely, if $[\alpha]=\Psi\circ\Phi([\sigma])$, then it follows from the proof of Theorem~\ref{thm-twist-H2-Br} that 
$G_\sigma$ is isomorphic to $G_\alpha$ as group extensions,  with $G_\alpha$ as in \eqref{eq-Galpha}. 
But it is trivial to check that $\tilde{V}:G_\alpha\to\U(\H); \tilde{V}_{(g,v)}=v$ has the properties as stated in the corollary.
\end{proof}
 
\subsection*{Continuous families of twists}\label{subsec-conttwists}
We now want to proceed by introducing our notion of a continuous family of twists over $G$. These will later serve as the natural parameters in our study of continuity properties for the deformation process we define in Section~\ref{sec:deformation-twist} below.
To be short, we define a continuous family of twists over $G$ as a twist over the groupoid (i.e. the trivial group bundle) $X\times G$ in the sense of \cite{KMRW}*{Section 8}:

\begin{definition}\label{defn-continuous-family-twists}
By a {\em continuous family of twists} for a locally compact group $G$ over the locally compact space $X$ 
(or, more simply, a \emph{twist for $X\times G$}) we understand a central groupoid extension
of the form
\begin{equation}\label{eq-groupoid}
\Sigma:=(X\times \T\stackrel{\iota}\into \mathcal G\stackrel{q}{\onto} X\times G).
\end{equation}
\end{definition}

Note that a groupoid $\G_\Sigma:=\mathcal G$ as in the definition above is a group bundle over $X$ such that the fibres $G_x$ over $x\in X$
are central extensions $\sigma_x:=(\T\into G_x\onto G)$. 
Hence we obtain a class $[\sigma_x]\in \Twist(G)$ for all $x\in X$.

Another logical starting point for studying continuity of our deformation process would be to 
start with a continuous family of actions  $\alpha^x:G\car \K(\H)$ over $X$ as in:

\begin{definition}\label{def-continuousfamilyactions}
Let $G$ be a locally compact group and $X$ a locally compact space. For each $x\in X$, let $\alpha^x:G\car \K(\H)$ be an action. We then say that $x\mapsto \alpha^x$ is a {\em continuous family of actions on $\K(\H)$} if for every $k\in \K(\H)$ the map
$$X\times G\to \K(\H); (x,g)\mapsto \alpha^x_g(k)$$
is continuous.
\end{definition}
 
 The following lemma is obvious and we omit the proof.
 
\begin{lemma}\label{lem-continuousfamilyactions}
Let $G$, $X$ and $\K(\H)$ be given. Then there exists a one-to-one correspondence between 
continuous families of actions $\alpha^x:G\car \K(\H)$ over $X$ and $C_0(X)$-linear actions $\alpha:G\car C_0(X,\K(\H))$
given by 
$$\big(\alpha_g(f)\big)(x):=\alpha^x_g(f(x)).$$
\end{lemma}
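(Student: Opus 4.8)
The plan is to check that each direction of the correspondence is well defined and that the two constructions are mutually inverse, so that the displayed formula sets up the asserted bijection. Write $D:=C_0(X,\K(\H))$, a $C_0(X)$-algebra whose fibre at $x$ is $\K(\H)$ via the evaluation $\ev_x$, with $\ker\ev_x=I_x:=\{f\in D:f(x)=0\}=C_0(X\setminus\{x\})\cdot D$.

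Starting from a continuous family $(\alpha^x)_{x\in X}$, I would first show that $(\alpha_g f)(x):=\alpha^x_g(f(x))$ defines a $C_0(X)$-linear action on $D$. For fixed $g$ the section $\alpha_g f$ is continuous and vanishes at infinity because each $\alpha^x_g$ is isometric and
\[\|\alpha^x_g(f(x))-\alpha^{x_0}_g(f(x_0))\|\le \|f(x)-f(x_0)\|+\|\alpha^x_g(f(x_0))-\alpha^{x_0}_g(f(x_0))\|,\]
whose second summand tends to $0$ by the joint-continuity hypothesis applied to $k=f(x_0)$; the homomorphism property and $C_0(X)$-linearity are immediate fibrewise. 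The one genuine point, which I expect to be the main (modest) obstacle, is that $g\mapsto\alpha_g$ is continuous for the \emph{uniform} norm of $D$, whereas the hypothesis is only continuity in $x$ for each fixed $k$. I would handle this first for an elementary tensor $f=\varphi\otimes k$ (with $\varphi\in C_0(X)$, $k\in\K(\H)$): on the compact set $K=\{x:|\varphi(x)|\ge\eta\}$ the continuous function $(x,g)\mapsto\|\alpha^x_g(k)-\alpha^x_{g_0}(k)\|$ vanishes along $g=g_0$, so a tube-lemma (finite subcover) argument gives a neighbourhood $V$ of $g_0$ on which it is small uniformly in $x\in K$, while off $K$ the factor $|\varphi(x)|<\eta$ together with isometry keeps $|\varphi(x)|\,\|\alpha^x_g(k)-\alpha^x_{g_0}(k)\|\le 2\eta\|k\|$ small; this bounds $\|\alpha_g f-\alpha_{g_0}f\|_\infty$. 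An $\epsilon/3$-approximation by finite sums of such tensors, using that the $\alpha_g$ are isometric, then yields strong continuity for all $f\in D$.

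For the converse I would begin with a $C_0(X)$-linear action $\alpha$ on $D$. Its $C_0(X)$-linearity forces $\alpha_g(I_x)=I_x$, so $\alpha_g$ descends to a unique automorphism $\alpha^x_g$ of $D/I_x\cong\K(\H)$ with $(\alpha_g f)(x)=\alpha^x_g(f(x))$, which is exactly the displayed formula and makes each $g\mapsto\alpha^x_g$ a homomorphism. To see that $(x,g)\mapsto\alpha^x_g(k)$ is continuous for fixed $k$, fix $(x_0,g_0)$, choose $\varphi\in C_c(X)$ with $\varphi\equiv1$ near $x_0$, and set $f=\varphi\otimes k$; then $\alpha^x_g(k)=(\alpha_g f)(x)$ near $x_0$, and this is continuous in $(x,g)$ as the composite of the continuous map $g\mapsto\alpha_g f$ (strong continuity of $\alpha$) with the jointly continuous evaluation $(h,x)\mapsto h(x)$ on $D\times X$. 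Restricting to fixed $x$ shows each $\alpha^x$ is a genuine (strongly continuous) action, so $(\alpha^x)_x$ is a continuous family. Finally the two assignments are mutually inverse, since in both directions the relation $(\alpha_g f)(x)=\alpha^x_g(f(x))$ recovers the family from the action (evaluate on any $f$ with $f(x)=k$) and the action from the family; this gives the asserted one-to-one correspondence.
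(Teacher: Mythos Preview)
Your proof is correct. The paper itself does not prove this lemma at all: it writes ``The following lemma is obvious and we omit the proof,'' so there is no approach to compare against, and your argument supplies precisely the routine verifications the authors chose to suppress. The one place where a reader might pause --- upgrading the pointwise-in-$x$ continuity hypothesis to strong continuity of $g\mapsto\alpha_g$ in the sup norm --- is handled correctly by your tube-lemma/compactness argument on elementary tensors followed by an $\epsilon/3$ density step.
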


The third possibility is to look for continuous families of cocycles as in

\begin{definition}\label{defn-cont-cocycles}
Let $G$ be a locally compact group. 
By a {\em continuous family} of circle-valued Borel $2$-cocycles $x\mapsto \om_x\in Z^2(G,\T)$ 
over the locally compact space $X$, we understand a Borel cocycle 
$\Om:G\times G\to C(X,\T)$ for the trivial $G$-module $C(X,\T)$ equipped with the topology of uniform convergence on compact sets, such that for all $x\in X$ and $g,h\in G$ we have 
$\om_x(g,h)=\Om(g,h)(x)$.
\end{definition}

If $G$ and $X$ are second countable, then it follows from \cite{KMRW}*{Lemma 8.2} that the first two definitions are 
equivalent up to the appropriate versions of equivalences: isomorphism of central groupoid extensions in the case of twists, and 
Morita equivalences of $C_0(X)$-linear actions of $G$ on $C_0(X,\K)$. We also have the following result, which follows from 
\cite{HORR}*{Proposition 3.1} together with \cite{CKRW}*{Theorem 5.1(3)}

\begin{proposition}\label{prop-cocycle-to-action}
    Suppose that $G$ and $X$ are second countable. Then any Borel cocycle $\Om:G\times G\to C(X,\T)$ with evaluations 
    $\om_x:=\Om(\cdot,\cdot)(x)$
    determines a 
    continuous family $x\mapsto \alpha^x$ of actions $\alpha^x:G\car \K(L^2(G))$ by defining
   $\alpha^x:=\Ad \rho^{\bar\om_x}$ for all $x\in X$, where $\rho^{\bar\om_x}:G\to \U(L^2(G))$ denotes the right regular $\bar\om_x$-representation of $G$ on $L^2(G)$. If $[\Om]=[\Om']\in H^2(G, C(X,\T))$,
   then the corresponding actions $\alpha$ and $\alpha'$ are $C_0(X)$-linearly Morita equivalent. 
\end{proposition}

\begin{remark}\label{rem-cont} 
It is an interesting question whether the above proposition still holds  if $G$ or $X$ are not second countable. 
A crucial fact used in the proof is the automatic continuity of a Borel homomorphism
between polish groups. But even in the second countable case we will usually not get all continuous families of actions
of $G$ on $\K(\H)$ over $X$ (up to Morita equivalence) from cocycles in $Z^2(G, C(X,\T))$, but only those 
which come from inner automorphisms $\alpha_g=\Ad v_g$ for some elements $v_g\in C(X, U(\H))$. 
 But not all $C_0(X)$-linear automorphisms 
of $C_0(X,\K(\H))$ need to be inner. We refer to 
\cites{CKRW, EW:locally-inner} for more details.

One can also ask whether a family of cocycles $\om_x\in Z^2(G,\T)$, $x\in X$, satisfying the condition that
for all $g,h\in G$ the function $x\mapsto \om_x(g,h)$ is continuous, will determine a Borel cocycle $\Om\in Z^2(G, C(X,\T))$
as in Definition~\ref{defn-cont-cocycles}. This is certainly true if $G$ is discrete or if the map 
$G\times G\times X\to \T; (g,h,x)\mapsto \om_x(g,h)$ is continuous, but it it is not clear to us whether it holds in general.
\end{remark}

Let us now see how  continuous families of twists and continuous families of actions are related if $G$ or $X$ 
are not assumed to be second countable. We first observe that every continuous family of actions 
determines a continuous family of twists. Below, the group $\U(\H)$ of unitaries on a Hilbert space $\H$ will be equipped with the 
strong operator topology.

\begin{lemma}\label{lem-groupbundle}
Let $X\ni x\mapsto\alpha^x$ be a continuous family of actions on $\K(\H)$. Let
$$\G_\alpha:=\{(x,g,v)\in X\times G\times \U(\H): \alpha^x_g=\Ad v\}.$$
Then the product topology on $X\times G\times \U(\H)$ induces a locally compact topology on $\G_\alpha$ and 
the canonical projection $p:\G_\alpha\to X$ gives $\G_\alpha$ the structure of a group bundle over $X$ with fibres 
$G_{\alpha^x}$ as in \eqref{eq-Galpha}. Moreover, we obtain twists 
$$\Sigma_\alpha:=(X\times \T\stackrel{\iota}{\into} \G_\alpha\stackrel{g}{\onto} X\times G)$$
over $X\times G$ as in Definition~\ref{defn-continuous-family-twists},  with quotient map $q: (x,g,v)\mapsto (x,g)$ and inclusion map
$\iota:(x,z)\mapsto (e,x, \bar{z}1_{\H})$,
which in each fibre induces the twist $\sigma_{\alpha^x}=(\T\into G_{\alpha^x}\onto G)$ as described in \eqref{eq-Galpha}.
\end{lemma}
\begin{proof} We only show that $\G_\alpha$ is locally compact since everything else is quite straightforward.
For this let $(x,g, v)$ be any fixed element of $\G_\alpha$. By Gleason's theorem \cite{Gleason}*{Theorem 4.1} we know that 
there are local continuous sections for the  central extension $\T\into \U(\H)\onto \PU(\H)$. 
Hence we can find a small neighbourhood $U$ of $v$, a neighbourhood $V$ of $q(v)\in \PU(\H)$, a continuous map
$\mathfrak s: V\to U$ and an open set $T\sbe \T$ such that $T\times V\cong U $ via $(z,[v])\mapsto z\mathfrak s([v])$.
Since the map $(x,g)\mapsto \alpha^x_g\in \Aut(\K(\H))=\PU(\H)$ is continuous, we find compact neighbourhoods 
$C$ of $x$ and $W$ of $g$
such that $\alpha_h^y\in V$ for all $(y,h)\in C\times W$. Let $\tilde{V}:=\{\alpha_h^y: (y,h)\in C\times W\}$. 
By continuity of $\alpha$, it follows that $\tilde{V}$ is compact. It follows that 
$$\G_\alpha\cap (C\times W\times U)$$
is a neighbourhood of $(x,g,v)$ in $\G_\alpha$ 
which is contained in the compact subset $K:=C\times W\times\T\mathfrak{s}(\tilde{V})$ of $X\times G\times \U(\H)$. 
Since $\G_\alpha$ is closed in $X\times G\times \U(\H)$ (which  follows from continuity of $\alpha$ and compactness of $\T$), 
the set $G_\alpha\cap K$ is a compact neighbourhood of $(x,g,v)$.
\end{proof}

The converse of this result is only clear with some restrictions: To prepare for it, let $\Sigma=(X\times \T\into \G\onto X\times G)$ be a 
twist for $X\times G$. In what follows we shall often write the elements of $\G$  as pairs $(\tig,x)$ with $x\in X$, $\tig\in G_x$, and we then denote by $(g,x)$ the image of $(\tig, x)$ under the quotient map $\G\onto X\times G$. Let
\begin{equation}\label{eq-groupoid-iota}
    C_0(\G,\iota):=\{f\in C_0(\G); f(\tig z, x)=\bar{z}f(\tig, x)\quad\forall (\tig,x)\in \G, z\in \T\},
\end{equation}
and write $C_c(\G,\iota)$ for the functions in $C_0(\G,\iota)$ with compact supports.
Define a $C_0(X)$-valued inner product on $C_c(\G,\iota)$  by
\begin{equation}\label{eq-L2Giota}
    \braket{\xi}{\eta}(x):=\int_G \overline{\xi(\tig,x)}\eta(\tig, x)\, dg
\end{equation}
With respect to this inner product, $C_c(\G,\iota)$ completes to give a Hilbert $C_0(X)$-module 
$L^2(\G,\iota)$. 
If we restrict functions in $C_c(\G, \iota)$ to the fibres $G_x$ over $x$ we obtain the 
inner product on $C_c(G_x,\iota)$ as defined in \eqref{eq-innerproduct}. Thus we see that 
$L^2(\G,\iota)$ is a continuous bundle of Hilbert spaces over $X$ with fibres $L^2(G_x,\iota)$. 
Consequently, the algebra of compact operators $\K(L^2(\G,\iota))$ is a continuous trace algebra over $X$ with fibres $\K(L^2(G_x,\iota))$. In particular, it is the \cstar{}algebra of $C_0$-sections of a continuous field of \cstar{}algebras over $X$.

In \eqref{eq-reptilderho} it is shown that the right translation action  of $G_x$ on the fibre $L^2(G_x,\iota)$ determines a unitary representation $\tilde\rho^x:G_x\to \U(L^2(G_x,\iota))$ as in \eqref{eq-reptilderho} such that the 
corresponding action $\Ad\tilde\rho^x\colon G_x\car\K(L^2(G_x,\iota))$ is constant on the central subgroup $\T\subseteq G_x$, and hence factors through a well-defined action $\alpha^x:G\car \K(L^2(G_x,\iota))$.  From this we can easily deduce that the right translation action of $\G$ on $L^2(\G,\iota)$  induces a well-defined $C_0(X)$-linear action $\alpha:G\car\K(L^2(\G,\iota))$ which is given  in each fibre $\K(L^2(G_x,\iota))$ by $\alpha^x$.

\begin{proposition}\label{prop-groupoid-to-action}
Let $\Sigma=(X\times\T\into \G\onto X\times G)$ be a twist for $X\times G$. Then the right translation action of $\G$ on $L^2(\G,\iota)$  induces a well-defined $C_0(X)$-linear action $\alpha:G\car\K(L^2(\G,\iota))$ given on each fibre by the action $\alpha^x:G\car \K(L^2(G_x,\iota))$ corresponding to the twist $\sigma_x=(\T\into G_x\onto G)$ as 
described in \eqref{eq-reptilderho}.
\end{proposition}

It is clear from the construction that if $\Sigma$ and $\Sigma'$ are isomorphic twists, i.e., $\G$ and $\G'$ are isomorphic 
groupoid extensions, then the resulting actions 
$\alpha$ and $\alpha'$ respectively, are conjugate in the sense that there exists a $C_0(X)$-linear isomorphism 
$\Phi:\K(L^2(\G,\iota))\to \K(L^2(\G',\iota))$ which intertwines $\alpha$ and $\alpha'$. In particular, the construction gives a well-defined map from the set $\Twist(X\times G)$ of isomorphism classes of twists over $X\times G$, to the set $B_0(X\times G)$
of Morita equivalence classes of $C_0(X)$-linear actions of $G$ on continuous trace algebras $A$ 
which are $C_0(X)$-linearly Morita equivalent to $C_0(X)$ (which just means that they have vanishing Dixmier-Douady invariant $\delta(A)\in H^2(X,\Z)$).

The above construction inverts the construction of $\Sigma_\alpha=(X\times\T\into \G_\alpha\onto X\times G)$
of Lemma~\ref{lem-groupbundle} up to Morita equivalence: 

\begin{lemma}\label{lem-invert}
    Let $\beta:G\car C_0(X,\K(\H))$ be a $C_0(X)$-linear action and let $\Sigma_\beta=(X\times\T\into \G_\beta\onto X\times G)$ be the 
    the corresponding twist as in Lemma~\ref{lem-groupbundle}. Then the action $\alpha:G\car\K(L^2(\G_\beta, \iota))$
    constructed from $\G_\beta$ as in Proposition~\ref{prop-groupoid-to-action} is $C_0(X)$-linearly Morita equivalent to $\beta$.
\end{lemma}
\begin{proof} We only give a sketch of the proof.
    The exterior tensor product $L^2(\G_\beta,\iota)\otimes \H^*$ is a $C_0(X)$-linear 
    $\K(L^2(\G_\alpha,\iota))-C_0(X)\otimes \K(\H)$ equivalence bimodule and carries an action $\gamma$ of the groupoid 
    $\G_\beta$ given on elementary tensors $\xi\otimes\eta*$ by 
    $$\gamma_{(x,g,v)}(\xi\otimes\eta^*)(x,h, u)=\Delta(g)^{\frac{1}{2}}\xi(x, hg, uv)\otimes (v\eta)^*$$
    at the fibre over $x\in X$. A short computation shows that the action is trivial on $X\times \T$ and therefore factors 
    through a $C_0(X)$-linear action of $G$ which then implements the desired Morita equivalence.
\end{proof}

If $G$ and $X$ are second countable, then we may pass to the Morita equivalent action 
$\alpha\otimes \id:G\car\K(L^2(\G, \iota))\otimes \K(\ell^2(\N))$ and use the well-known fact 
that $\K(L^2(\G, \iota))\otimes \K(\ell^2(\N))$ is $C_0(X)$-linearly isomorphic to $C_0(X,\K(\ell^2(\N)))$ to deduce the
equivalence of Proposition~\ref{prop-cocycle-to-action}. 
In the general case,  
the notion of continuous families of twists seems to  be the most general setting to study continuous deformations of elements 
in $\Twist(G)\cong \H^2(G,\T)\cong \Br(G)$.

We close this section by showing that each central extension $Z\stackrel{\iota_Z}{\into} H\stackrel{q_H}{\onto} G$ of $G$ by the abelian locally compact group $Z$ canonically determines a twist $\Sigma_H$ for  $\widehat{Z}\times G$, where $\widehat{Z}$ denotes the Pontryagin dual of $Z$. First observe that every character $\chi\in \widehat{Z}$ determines a twist 
\begin{equation}\label{eq-transgression}
   \sigma_\chi:=( \T\stackrel{\iota_\chi}{\into} 
G_\chi\stackrel{q_\chi}{\onto} G)
\end{equation} by defining 
$G_\chi:=(H\times \T)/Z$ with respect to the action $z(h,w)=(zh, \chi(z)w)$ for $z\in Z, (h,w)\in H\times \T$.
The inclusion and quotient maps are given by
$\iota_\chi:w\mapsto [e,w]$ and  $q_\chi:[h,w]\mapsto q_H(h)$, respectively. 
This construction can be done simultaneously 
over $\widehat{Z}$:

\begin{proposition}\label{prop-groupext}
Let $Z\stackrel{\iota_Z}{\into} H\stackrel{q_H}{\onto} G$
be as above. Let $Z$ act (freely and properly) on the product space $\widehat{Z}\times H\times \T$ by
$$z(\chi, h, w):= (\chi, zh, \chi(z)w)\quad\forall z\in Z, (\chi, h,w)\in \widehat{Z}\times H\times  \T.$$
Then there is a twist 
$$\Sigma_H:=(\widehat{Z}\stackrel{\iota}{\times} \T\into \G_H\stackrel{q}{\onto} \widehat{Z}\times G)$$
with $\G_H:=(\widehat{Z}\times H\times  \T)/Z$ and  inclusion and quotient maps are given by
$$\iota:(\chi, w)\mapsto [\chi, e, w]\quad{and}\quad q:[\chi, h, w]\mapsto (\chi, q_H(h)).$$
Here $e$ denotes the neutral element of $H$.
\end{proposition}

We omit the straightforward proof. Clearly, the fibre over $\chi\in \widehat{Z}$ of the  twist $\Sigma_H$ is just the twist 
$\sigma_\chi$ of \eqref{eq-transgression}.

\begin{remark}\label{rem-splitting}
    If there exists a Borel section $\mathfrak{s}_H:G\to H$ for the quotient map $q_H$ (e.g., if $G$ is discrete or $H$ is second countable), then we obtain a Borel cocycle
$\eta\in Z^2(G,Z)$ given by $\eta(g,h)=\mathfrak{s}_H(g)\mathfrak{s}_H(h)\mathfrak{s}_H(gh)^{-1}$. Then every character $\chi\in \widehat{Z}$
determines the cocycle $\om_\chi:=\chi\circ \eta\in Z^2(G,\T)$. One then easily checks that
$$G_{\om_\chi}\to G_\chi; (g,w)\mapsto [\mathfrak{s}_H(g), w]$$
induces  an isomorphism of central extensions of $G$ by $\T$.
\end{remark}

The following definition slightly extends a definition due to Calvin Moore in \cite{MooreII} for that it does not 
assume that $G$ is  second countable:

\begin{definition}[Moore]\label{defn-splitting}
A central extension  $Z\stackrel{\iota_Z}{\into} H\stackrel{q_H}{\onto} G$ as above is called a {\em representation group}
for $G$, if the {\em transgression map} $\tg:\widehat{Z}\to \Twist(G); \chi\mapsto [\sigma_\chi]$ is an  isomorphism of abelian groups.
We say that $G$ is {\em smooth} if there exists a representation group for $G$.
\end{definition}

Note that Moore called a second countable extension $Z\stackrel{\iota_Z}{\into} H\stackrel{q_H}{\onto} G$ a representation group, if the map $\tg:\widehat{Z}\to H^2(G,\T): \chi\mapsto [\om_\chi]$, with $\om_\chi$ as in Remark~\ref{rem-splitting} above, 
is an isomorphism of abelian groups. It follows from our discussions that both definitions are equivalent if there exists a Borel section $\mathfrak{s}_H:G\to H$, e.g., if $H$ is second countable.

In \cite{MooreII}, Moore used a representation group of $G$ (if it exists) to define a topology 
on $H^2(G,\T)$ by transporting the locally compact topology of $\widehat{Z}$ to $H^2(G,\T)$ via the transgression map.
He also shows that the topology does not depend on the choice of a particular representation group (see \cite{MooreII}*{Theorem 2.2}). Later, in \cite{MooreIV} he shows that a topology on $H^2(G,\T)$ can be defined intrinsically for all second countable $G$, 
which coincides with the above defined topology if $G$ is smooth. In general, the topology on $H^2(G,\T)$ can be non Hausdorff.
It is shown in \cite{MooreII}*{Theorem 3.1} that for every discrete group $G$ the group $H^2(G,\T)$ carries a canonical compact Hausdorff topology (induced from the topology of pointwise convergence of cocycles) and that all discrete groups admit a representation group. The proof does not use any countability conditions on $G$. 
In general, the  class of  smooth groups is quite large. Aside of discrete groups, it contains all second countable locally compact groups 
which are  compact or compactly generated abelian. Moreover, by 
\cite{MooreII}*{Proposition 2.7}  an almost connected  second countable group $G$ is smooth if and only if $H^2(G,\T)$ is Hausdorff. 
For more results we refer the reader to \cite{MooreII} and to \cite{EW:locally-inner}*{Section 4}.

\begin{example}\label{ex-RRn}
Let  $G=\R^n$ and, as a set, let $H_n = \R^{\frac{n(n+1)}2}$. Write an element of $H_n$ as
$s=(s_i,s_{j,k}),1\leq i\leq n,1\leq j<k\leq n$ and  define multiplication on $H_n$ by 
$$st = ((st)_i,(st)_{j,k})\quad\text{with}
\quad
(st)_i := s_i + t_i\; \text{and}\; (st)_{j,k} := s_{j,k} + t_{j,k} + s_jt_k.$$
Then $H_n$ is a central extension of $\R^n$ by $\R^{\frac{(n - 1)n}2}$. It is shown in \cite{EW:locally-inner}*{Example 4.7} that it is a representation group for $\R^n$.

The same construction with integer entries $s_i, s_{i,j}\in \Z$ will give us a central extension of $\Z^n$ by $\Z^{\frac{(n - 1)n}2}$ which serves as a representation group for $\Z^n$.
\end{example}

We should mention that, in general, a representation group of a smooth group is not unique (up to isomorphism of extensions).
It is shown in  \cite{EW:locally-inner}*{Proposition 4.8} that the representation group for a smooth group $G$ is unique if
every abelian extension $Z\into H\onto G_{\mathrm{ab}}$
splits, where $G_{\mathrm{ab}}$ denotes the abelianization of $G$. 
This implies, in particular, that the representation groups for $\R^n$ and $\Z^n$ are unique.

\begin{example}\label{ex-semi-simple}
Let $G$ be any connected real semi-simple Lie group.  Let $H$ be the universal covering group of $G$. It is a central extension
$$Z\into H\onto G$$
by some central discrete subgroup $Z$ of $G$. 
It is shown in \cite{MooreII}*{Proposition 3.4.} that this is the unique representation group of $G$.

In many cases, the group $Z$ will be finite, hence $\Twist(G)\cong H^2(G,\T)\cong \widehat{Z}$ will be finite (and discrete) as well. But there are some interesting exceptions. For example, the universal covering of $\PSL(2,\R)$ is a central extension of $\PSL(2,\R)$ by the integer group $\Z$, hence 
$H^2(\PSL(2,\R),\T)\cong \widehat{\Z}=\T$.
\end{example}

Suppose that $Z\into H\onto G$ is a representation group for  $G$.  Then, for every locally compact space $X$ and continuous map $\varphi:X\to\widehat{Z}$, we can construct the pullback $\varphi^*\G_H$  as 
\begin{equation}\label{eq-pullback}
    \varphi^*\G_H:=\{(x, [\chi, h, w])\in X\times\G_H: \varphi(x)=\chi\}.
\end{equation}
Together with the inclusion $\iota: (x, w)\mapsto (x, [\varphi(x), e, w])$ and the quotient map 
$q:(x, [\chi, h, w])\mapsto (x, q_H(h))$, we obtain the twist
$$\varphi^*\Sigma_H=(X\times \T\stackrel{\iota}{\into} \varphi^*\G_H\stackrel{q}{\onto} X\times G)$$
 with fibres 
$\sigma_{\varphi(x)}=(\T\into G_{\varphi(x)}\onto G)$ for all $x\in X$. If $G$ (and hence $H$) is second countable, or if $G$ is discrete,
it follows from Moore's results, as discussed above, that $\tg:\widehat{Z}\to H^2(G,\T)$ is an isomorphism of 
topological groups. We then may formulate

\begin{corollary}\label{cor-contmapH2}
Suppose that $Z\into H\onto G$ is a representation group for the second countable group $G$ and let
$\varphi:X\to H^2(G,\T)$ be any continuous map. Then
\begin{enumerate}
    \item[(i)] There exists a cocycle $\Om\in Z^2(G, C(X,\T))$ such that $\varphi(x)=[\om_x]$ for each $x\in X$,
    where $\om_x:=\Om(\cdot,\cdot)(x)$
    for all $x\in X$.
    \item[(ii)] There exists a continuous family of actions $\alpha:G\car C_0(X,\K(L^2(G))$ such that 
    $[\alpha^x]=\Phi(\varphi(x))$ for all $x\in X$, where $\Phi:H^2(G,\T)\to \brg$ is the isomorphism of 
    Theorem~\ref{thm-twist-H2-Br}.
    \end{enumerate}
\end{corollary}
\begin{proof}
    Let $\eta\in Z^2(G,Z)$ be a Borel cocycle corresponding to the extension $Z\into H\onto G$. 
   Let $\widehat{\eta}\in Z^2(G, C(\widehat{Z},\T))$ denote the cocycle given by
   $$\widehat{\eta}(g,h)(\chi)=\chi(\eta(g,h))\quad \forall g,h\in G,\chi\in \widehat{Z}.$$
   By the definition of the transgression map $\tg:\widehat{Z}\to H^2(G,\T)$ we then get 
   $[\widehat{\eta}(\cdot,\cdot)(\chi)]=[\om_\chi]=\tg(\chi)$ for all $\chi\in \widehat{Z}$.
   Thus, identifying $\widehat{Z}$ with $H^2(G,\T)$ via the transgression map, we may regard 
   $\widehat{\eta}$ as a cocycle in $Z^2(G, C(H^2(G,\T), \T))$ such that 
   $$[\widehat{\eta}(\cdot,\cdot)([\om])]=[\om]\quad \forall [\om]\in H^2(G,\T).$$
   Now, if $\varphi:X\to H^2(G,\T)$ is a continuous map, the pullback $\Om:=\varphi_*\widehat{\eta}$
   defined by $\Om(g,h)(x):=\widehat{\eta}(g,h)(\varphi(x))$, satisfies all requirements of item (i).
   Item (ii) is then a consequence of Proposition~\ref{prop-cocycle-to-action}.
   \end{proof}

Note that if $\varphi:X\to Z^2(G,\T); x\mapsto \om_x$ is any map such that for all pairs $g,h\in G$ the map 
$x\mapsto \om_x(g,h)$ is continuous, then it follows from Moore's description of the topology of $H^2(G,\T)$ 
in \cite{MooreIV}
that $z\mapsto [\om_x]\in H^2(G,\T)$ is continuous. So the above corollary will apply to this map. This gives a 
partial answer to the question raised in Remark~\ref{rem-cont}.

\section{Deformation by twists}
\label{sec:deformation-twist}

Let $\rtimes_\mu$ be a \emph{correspondence crossed-product functor} for $G$ and let $(A,\delta)$ be a $\mu$\nb-coaction. 
We  want to describe a procedure for constructing a deformed $\mu$-coaction  $(A^\sigma_\mu,\delta^\sigma_\mu)$ depending on a class 
$[\sigma]\in \Twist(G)$.
By the results in the previous section we know that $\Br(G)\cong \Twist(G)\cong H^2(G,\T)$, so 
our procedure can equally be regarded as a deformation by a class $[\alpha]\in \Br(G)$ or by a class $[\om]\in H^2(G,\T)$.

So in what follows let $\sigma=(\T\into G_\sigma\onto G)$ be a twist for $G$. By  Corollary~\ref{cor-twist-action} we can choose
 a unitary representation 
$\tilde{V}:G_\sigma\to \U(\H)$ which satisfies
$\tilde{V}_{\tig z}=\bar{z}\tilde{V}_{\tig}$ for all $\tig\in G_\sigma, z\in \T$.
Then $\Ad \tilde{V}:G_\sigma\car \K(\H)$ factors through an action $\alpha:G\car \K(\H)$ such that $[\alpha]\in \Br(G)$ corresponds to $[\sigma]$
under the isomorphism $\Twist(G)\cong \Br(G)$. 
In what follows below we shall always fix a a representation $\tilde{V}:G_\sigma\to \U(\H)$ and the  action $\alpha=\Ad\tilde{V}$ as above. 

As before, we shall often write $g:=q(\tig)$ for an element $\tig\in G_\sigma$.
Recall that
\begin{equation}\label{eq-module}
C_0(G_\sigma, \iota)=\{\xi\in C_0(G_\sigma): \xi(\tig z)=\bar{z}\xi(\tig)\; \forall \tig\in G_\sigma, z\in \T \}.
\end{equation}
This can be identified with the space of $\contz$-sections of the complex line bundle $\L_\sigma$ over $G$ associated to $\sigma$ defined by
the quotient $\L_\sigma:=\big(G_\sigma\times \C\big)/\T$,
with respect to the $\T$-action given by 
$z\cdot\big(\tig, w\big)=\big(\bar z\tig, zw\big)$ for $z\in \T$, $w\in \C$, $x\in G_\sigma$.

Then $C_0(G_\sigma, \iota)$ becomes a $C_0(G)-C_0(G)$ equivalence bimodule with respect to the 
canonical left and right actions of $C_0(G)$ by pointwise multiplication, and left and right inner products defined by
\begin{equation}\label{eq-inner}
_{C_0(G)}\braket{\xi}{\eta}(g):=\xi(\tig)\overline{\eta(\tig)}\quad \text{and}\quad
\braket{\xi}{\eta}_{C_0(G)}(g):=\overline{\xi(\tig)}\eta(\tig).
\end{equation}
Let  $\H^*=\{\xi^*:\xi\in \H\}$ denote the dual of the $\K(\H)-\C$ equivalence module $\H$ (see \eqref{eq-dual-inner-product} and \eqref{eq-action-dual} above).  Denote by $\tildeV^*$ the  action of $G_\sigma$ on $\H^*$ given by
\begin{equation}\label{eq-theta*}
\tildeV^*_{\tig}(\xi^*):=(\tilde{V}_{\tig}\xi)^*.
\end{equation}
Notice that $\tildeV^*$ then satisfies the relation $\tildeV^*_{\tig z}=z\tildeV^*_{\tig}$ for all $\tig\in G_\sigma$ and $z\in \T$.
Recall that $\rt:G\car C_0(G)$ denotes the right translation action for $G$. Similarly, in what follows, we shall denote by $\tilde\rt:G_\sigma\car C_0(G_\sigma, \iota)$ the right translation for $G_\sigma$. A short computation shows that it 
satisfies the relation
$\tilde\rt_{\tig z}=\bar{z}\tilde\rt_{\tig}$ for all $\tig\in G_\sigma, z\in \T$. 

\begin{lemma}\label{lem-Morita}
The diagonal action $\tilde\rt\otimes \tildeV^*:G_\sigma\car C_0(G_\sigma, \iota)\otimes \H^*$ factors through an action of $G$ which makes
$C_0(G_\sigma, \iota)\otimes \H^*$  an 
$(C_0(G),\rt)-(C_0(G)\otimes \K(\H),\rt\otimes\alpha)$ equivariant Morita equivalence.
\end{lemma}
\begin{proof} 
    For all $\tig\in G_\sigma$ and $z\in \T$ we have
    $$(\tilde\rt\otimes{\tildeV}^*)(\tig z)= \bar{z}\tilde\rt_{\tig}\otimes z{\tildeV}^*_{\tig}
    =(\tilde\rt\otimes {\tildeV}^*)_{\tig},$$
    which shows that the action only depends 
    on $g=q(\tig)$. It is then an easy exercise to check that this action is 
    compatible with the right translation action $\rt:G\car C_0(G)$ on the left and 
    the diagonal action $\rt\otimes \alpha$ on the right.
    \end{proof}

\begin{remark}\label{rem-twist1}
The central extension $G_\sigma$ gives rise to a Green-twisted action $(\id_\C,\iota)$ of the pair 
$(G_\sigma, \T)$ on $\C$, where $\iota:\T\to\T=\U(\C)$ is the identity. 
For each action $\beta:G\car B$ we may then tensor $\beta$ (inflated to $G_\sigma$) 
with the twisted action $(\id_\C, \iota)$ on $B=B\otimes\C$, which we denote by 
$(\beta, {\iota})$ (compare with Remark~\ref{rem-twist}).
It is then easy to check that the right translation action $\tilde\rt:G_\sigma\curvearrowright C_0(G_\sigma,\iota)$ 
implements an $\rt-(\rt,\iota)$-equivariant Morita equivalence of Green twisted actions of $(G_\sigma,\T)$, 
where we identify an action of $G$ with its inflated (trivially) twisted action of $(G_\sigma,\T)$ (see \cite{Ech:Morita} for the relevant notions of Morita equivalent twisted actions).

Similarly, one checks that the action $\tildeV^*:G_\sigma\curvearrowright \H^*$ implements an 
$(\id_\C, \iota)-\alpha$ equivariant $\C-\K(\H)$ Morita equivalence. 
The module of Lemma~\ref{lem-Morita} above is just the composition (over $\C$) of these 
two twisted Morita equivalences.
\end{remark}

\begin{proposition}\label{prop-Borelcocycle}
Let $(B,\beta, \phi)$ be any  weak $G\rtimes G$-algebra for the locally compact  group $G$ and let $(G_\sigma, \tildeV)$
and $\alpha=\Ad\tildeV:G\car\K(\H)$ be as above. Then the internal tensor product
\begin{equation}\label{eq-EalphaB}
    \E_\sigma(B):= \big(C_0(G_\sigma,\iota)\otimes_{C_0(G)} B)\otimes_\C\H^*
\end{equation}
is a full Hilbert $B\otimes \K(\H)$-module and the diagonal action 
$$\gamma_\sigma:=\tilde\rt\otimes\beta\otimes\tildeV^*:G\car \E_\sigma(B)$$
is compatible with the  action $\beta\otimes \alpha:G\car B\otimes\K(\H)$.

As a consequence, if 
$\beta_\sigma:=\Ad\gamma_\sigma$ denotes the adjoint action on $B_\sigma:=\K(\E_\sigma(B))$, then  $(\E_\sigma(B), \gamma_\sigma)$  becomes 
a $\beta_\sigma-\beta\otimes \alpha$ equivariant $B_\sigma-B\otimes \K(\H)$ Morita equivalence.

Together with the left action $\phi_\sigma: C_0(G)\to\L(\E_\sigma(B))\cong \M(B_\sigma)$, which is induced from the left action of 
$C_0(G)$ on $C_0(G_\sigma,\iota)$, the triple $(B_\sigma, \beta_\sigma,  \phi_\sigma)$
becomes a  weak $G\rtimes G$-algebra.
\end{proposition}
\begin{proof} Note first that, as in the proof of Lemma~\ref{lem-Morita}, it follows that $\gamma_\sigma$ is a well-defined action of $G$.
Since the internal tensor product of full Hilbert modules is always full, it is clear that $\E_\sigma(B)$ is a full Hilbert $B\otimes\K(\H)$-module
and therefore becomes a $\K(\E_\sigma(B))-B\otimes \K(\H)$ Morita equivalence.
We claim that the diagonal action $\tilde\rt\otimes\beta\otimes\tildeV^*$ of $G$ on $\E_\sigma(B)$ is compatible with $\beta\otimes \alpha$. 
Using the $\rt-\beta$ equivariance of $\phi$ we compute for all elementary tensors 
$\xi_i\otimes_i b_i\otimes \eta_i^*$ with $\xi_i\in C_0(G_\sigma,\iota)$, $b_i\in B$, and $\eta_i^*\in \H^*$, 
$i=1,2$, and for all $\tig\in G_\sigma$:
\begin{align*}
&\braket{\tilde\rt_{\tig}(\xi_1)\otimes\beta_g(b_1)\otimes \tildeV^*_{\tig}(\eta_1^*))}{\tilde\rt_{\tig}(\xi_2)\otimes\beta_g(b_2)\otimes  \tildeV^*_{\tig}(\eta_2^*)}_{B\otimes \K(\H)}\\
&=\braket{\tilde\rt_{\tig}(\xi_1)\otimes\beta_g(b_1)}{\tilde\rt_{\tig}(\xi_2)\otimes\beta_g(b_2)}_{B}\otimes \braket{\tildeV^*_{\tig}(\eta_1^*)}{\tildeV^*_{\tig}(\eta_2^*)}_{\K(\H)}\\
&=\beta_g(b_1^*)\phi\big(\braket{\tilde\rt_{\tig}(\xi_1)}{\tilde\rt_{\tig}(\xi_2)}_{C_0(G)}\big)\beta_g(b_2)\otimes 
\alpha_g\big(\braket{\eta_1^*}{\eta_2^*}_{\K(\H)}\big)\\
&=\beta_g(b_1^*)\phi\big(\rt_g\big(\braket{\xi_1}{\xi_2}_{C_0(G)}\big)\beta_g(b_2)\otimes 
\alpha_g\big(\braket{\eta_1^*}{\eta_2^*}_{\K(\H)}\big)\\
&=\beta_g\big(b_1^*\phi\big(\big(\braket{\xi_1}{\xi_2}_{C_0(G)}\big)b_2\big)\otimes 
\alpha_g\big(\braket{\eta_1^*}{\eta_2^*}_{\K(\H)}\big)\\
&=(\beta\otimes\alpha)_g\big(\braket{\xi_1\otimes b_1\otimes \eta_1^*}{\xi_2\otimes b_2\otimes \eta_2^*}_{B\otimes \K(\H)}\big).
\end{align*}
This proves the claim and all statements of the proposition follow  from this.
 \end{proof}

 \begin{definition}\label{def-cosystem}
 Let $(A,\delta)$ be a $\mu$-coaction for some duality crossed-product  functor $\rtimes_\mu$ and let $(B,\beta, \phi):=(A\rtimes_\delta \widehat{G}, \widehat{\delta}, j_{C_0(G)})$ be the corresponding weak $G\rtimes G$\nb-algebra. For a twist $\sigma=(\T\into G_\sigma\onto G)$ and corresponding action $\alpha=\Ad\tilde{V}:G\car \K(\H)$, let  $(B_\sigma, \beta_\sigma,  \phi_\sigma)$  be the weak  $G\rtimes G$-algebra as constructed above. 
 We then  define the {\em $\sigma$-deformation} of $(A,\delta)$ as  the cosystem 
 $(A_\mu^\sigma, \delta_\mu^\sigma)$  associated to $(B_\sigma, \beta_\sigma,  \phi_\sigma)$
and $\rtimes_\mu$ as in Theorem~\ref{thm-Landstad}. 
\end{definition} 

The following is a direct consequence of our constructions together with \cite{EKQR}*{Theorem 2.15}. Since it requires $\rtimes_\mu$ to be compatible for Morita equivalences, we shall formulate it for correspondence crossed-product functors $\rtimes_{\mu}$. Recall that these are also duality crossed-product functors.

\begin{proposition}\label{prop-coactionequivalence}
Let $(A,\delta)$ be as above and assume that $\rtimes_\mu$ is a correspondence crossed-product functor. Then the composition 
$$\F_\mu(B_\sigma)\otimes_{B_\sigma\rtimes_\mu G}\big(\E_{\sigma}(B)\rtimes_\mu G\big)$$
of the $A_\mu^\sigma-B_\sigma\rtimes_\mu G$ equivalence bimodule $\F_\mu(B_\sigma)$  of Proposition~\ref{prop-Landstad}
with the $\mu$-crossed product $\E_\sigma(B)\rtimes_\mu G$ becomes 
a $\delta_{A^\sigma_\mu}-\widehat{\beta\otimes \alpha}_\mu$ equivariant $$A^{\sigma}_\mu-(B\otimes\K(\H))\rtimes_{\beta\otimes\alpha,\mu} G$$ equivalence bimodule with respect to the composition of the coaction $\delta_{\F_\mu(B_\sigma)}$ on $\F_\mu(B_\sigma)$ with the dual coaction 
$\widehat{\gamma_\sigma}_\mu$ on $\E_\sigma(B)\rtimes_\mu G$. 
\end{proposition}

\begin{remark}\label{rem-twist} 
It is useful to observe that the diagonal action of $G_\sigma$ on the tensor factor \begin{equation}\label{eq-FalphaB}
\L(G_\sigma,B):=C_0(G_\sigma,\iota)\otimes_{C_0(G)}B
\end{equation}
of $\E_\sigma(B)$ in \eqref{eq-EalphaB} induces a twisted equivariant Morita equivalence between 
$(B_\sigma, \beta_\sigma)$ and the Green-twisted system $(B, (\beta,\iota))$ of $(G_\sigma, \T)$.
In particular, we then get $B_\sigma=\K(\L(G_\sigma,B))$ and $\beta_\sigma=\Ad(\tilde\rt\otimes \beta)$.
We used the $B_\sigma-B\otimes \K(\H)$ module $\E_\sigma(B)$ in our constructions to avoid the use 
of twisted actions and their crossed products at this place.
But this observation shows very well that our construction directly depends on the twist
$\sigma=(\T\into G_\sigma\onto G)$, and it is clear that the construction could have been done 
without any reference to the action $\alpha=\Ad\tilde{V}:G\car \K(\H)$.

Since an isomorphism of extensions $\sigma=(\T\into G_\sigma\onto G)\cong (\T\into G_{\sigma'}\onto G)=\sigma'$ directly induces an isomorphism of  
modules 
$$\L(G_\sigma,B)=C_0(G_\sigma,\iota)\otimes_{C_0(G)}B\cong C_0(G',\iota)\otimes_{C_0(G)}B=\L(G_{\sigma'},B),$$
it follows that $(B_\sigma,\beta_\sigma,\phi_\sigma)$ depends, up to isomorphism, only on the  class 
$[\sigma]\in \Twist(G)$ or, equivalently, on the class $[\alpha]\in \brg$.
\end{remark}

It follows from Proposition~\ref{prop-coactionequivalence} that the deformed cosystem $(A_\mu^\sigma, \delta_\mu^\sigma)$  is Morita equivalent to the dual cosystem $\big(B\otimes\K(\H))\rtimes_\mu G, \widehat{\beta\otimes\alpha}\big)$ if $\rtimes_\mu$ is a correspondence functor. By the above remark
we also see that $A_\mu^\sigma$ is  Morita equivalent to the Green-twisted crossed product 
$B\rtimes_{(\beta,\iota), \mu}G$ for an appropriate notion of exotic twisted crossed products. 
Now if we think in terms of classes $[\om]\in H^2(G,\T)$ via the isomorphism 
$\Psi:\Twist(G)\stackrel{\cong}{\to} H^2(G,\T)$, then $B\rtimes_{(\beta,\iota), \mu}G$  corresponds to the 
Busby-Smith type twisted crossed product
$B\rtimes_{(\beta,\om),\mu}G$ 
as used in the reduced setting in \cite{BNS}. In particular, we can now introduce

\begin{definition}\label{deformom}
For each $\om\in Z^2(G,\T)$, let $\sigma_\om=(\T\into G_\om\onto G)$ be the twist associated to $\om$ as in \eqref{eq-sigmaom}. Then, if $(A,\delta)$ is any $\mu$-coaction for some duality crossed-product functor $\rtimes_\mu$ for $G$, we define 
$$(A^\om_\mu, \delta^\om_\mu)=(A_\mu^{\sigma_\om},\delta_\mu^{\sigma_\om}). $$
We call $(A^\om_\mu, \delta^\om_\mu)$ the $\om$-deformation of $(A,\delta)$. 
\end{definition}

In the case where  $\om:G\times G\to\T$ is a continuous cocycle, 
the above construction  coincides with the one defined in \S~\ref{subsec-cont-cocycle}. 
Indeed, we can even show the following

\begin{proposition}\label{pro:condition-iso-B=Bom}
    Let $(B,\beta,\phi)$ be a weak $G\rtimes G$-algebra. Let $\om\in Z^2(G,\T)$ and 
    $\sigma_\om=(\T\into G_{\om}\onto G)$ be the extension corresponding to $\om$. Let $\mathfrak{s}:G\to G_{\om}; g\mapsto (g,1)$
    be the canonical cross-section. Let $D_{\om}(G)$ denote the $G$-invariant closed \Star{}subalgebra of $L^\infty(G)$ 
    generated by 
    $$\{f\circ \mathfrak{s}, f\in C_0(G_\om,\iota)\}\cup \{{\om}(\cdot, g): g\in G\}.$$
    If  $\phi\colon \contz(G)\to \M(B)$ extends to a 
 $G$-equivariant unital \Star{}homomorphism $\tilde\phi\colon D_{\om}(G)\to \M(B)$, then there exists an isomorphism of Hilbert $B$-modules 
    \begin{equation}\label{eq:rep-L-om}
       \Phi:  \L(G_\om,B):=\contz(G_{\om},\iota)\otimes_\phi B\congto B,\quad f\otimes b\mapsto \tilde\phi(f\circ\mathfrak{s})b,
    \end{equation}
which intertwines $\phi_{\om}$ with $\phi$, and  transforms the action $\beta_{\om}$ on $B_{\om}\cong B$  to the action
$s\mapsto \Ad U_{\om}(s)\circ \beta_s$, where $U_{\om}(s):=\tilde\phi(u_{\om}(s))$, with 
$u_{\om}(s)\in \U (D_\om(G))$ defined as in~\eqref{eq-uomega} by $u_{\om}(s)(t):=\bar\om(t,s)$. 
\end{proposition}
\begin{remark}\label{rem--iso-B-om}
Note that $D_\om(G)$ always contains $C_0(G)=\overline{C_0(G_\om,\iota)}\cdot C_0(G_\om,\iota)$. 
If $\om$ is continuous, then $D_\om(G)\subseteq C_b(G)=\M(C_0(G))$, and then $\tilde\phi: D_\om(G)\to\M(B)$
always exists. This implies that the weak $G\rtimes G$-algebra $(B_\om,\beta_\om,\phi_\om)$ of 
Proposition~\ref{prop-Borelcocycle} is isomorphic to the triple $(B,\beta^\om,\phi)$ as defined in Section~\ref{subsec-cont-cocycle}.

The proposition also applies if one can extend $\phi:C_0(G)\to \M(B)$ to a $G$\nb-equivariant unital \Star{}homomorphism 
$\tilde\phi:L^\infty(G)\to\M(B)$. As we shall see in Proposition~\ref{pro:dual-coactions-deformation}
below, this  holds true for dual coactions on crossed products. 
In a forthcoming paper we shall see that this also  holds for (dual) coactions on 
cross-sectional \cstar{}algebras of Fell bundles.

In general, the Hilbert $B$-module $\L(G_\omega,B)$ in \eqref{eq:rep-L-om} might not be isomorphic to $B$. 
For instance, if $B=\contz(G)$, then $\L(G_\omega,B)$ is the equivalence $\contz(G)$-$\contz(G)$-bimodule $\contz(G_{\omega},\iota)$, which is 
isomorphic to $\contz(G)$ as a right Hilbert $\contz(G)$-module if and only if $\omega$ is continuous.
But even if  $\L(G_\omega,B)$ is not isomorphic to $B$ as a right Hilbert $B$-module, it may still be true that $B_\om=\K(\L(G_\omega,B))\cong B$ as \cstar{}algebras. For example, this is always true for $B=C_0(G)$. Indeed, we have no example where we can definitely say that  $B$ and $B_\om$ are not isomorphic! Notice, however, that $B_\om$ and $B$ are  Morita equivalent by construction, so that they are stably isomorphic if both are $\sigma$-unital.
\end{remark}
\begin{proof}[Proof of Proposition~\ref{pro:condition-iso-B=Bom}]
In what follows below 
 products of subspaces will always mean the closed linear span of their products.
Since $\tilde\phi(D_\om(G))B\sbe B$, we have
\begin{multline*}
B\supseteq \tilde\phi (\contz(G_{\om},\iota))B\supseteq
\tilde\phi (\contz(G_{\om},\iota))\tilde\phi (\overline{\contz(G_{\om},\iota)})B\\
=\tilde\phi \big(\contz(G_{\om},\iota)\overline{\contz(G_{\om},\iota)}\big)B=\phi(\contz(G))B=B
\end{multline*}
and therefore $\tilde\phi (\contz(G_{\om},\iota))B=B$. This easily implies that the map $\Phi$  in \eqref{eq:rep-L-om} is an isomorphism of 
Hilbert $B$-modules which intertwines the left action $\phi_\om$ of $C_0(G)$ on $\L(G_\om,B)$ with $\phi$.
It therefore induces an isomorphism $(B_\om,\phi_\om)\cong (B,\phi)$. 
It remains to show that $\Phi$ transforms the action  $\beta_{\om}=\Ad(\tilde\rt\otimes \beta)$ on $B_\om$ to the action 
 $\Ad U_\om\circ\beta$ on $B$ with $U_\om=\tilde\phi\circ u_\om$. To see this
note first that the left $B$-valued inner product on elementary vectors $f\otimes a, h\otimes b\in C_0(G_\om,\iota)\otimes_{C_0(G)}B$ 
has the form
$${_B\lk f\otimes a, h\otimes b\rk}= \tilde\phi(f\circ \mathfrak{s})ab^*\tilde\phi(h\circ\mathfrak{s})$$
and the action $\Ad(\tilde\rt\otimes\beta)$ is given on such elements by the formula
$$\Ad(\tilde\rt\otimes \beta)_s(\tilde\phi(f\circ \mathfrak{s})ab^*\tilde\phi(h\circ\mathfrak{s}))=\tilde\phi(\tilde\rt_{(s,1)}(f)\circ\mathfrak{s})\beta_s(ab^*)\tilde\phi(\overline{\tilde\rt_{(s,1)}(h)\circ\mathfrak{s}}).$$
But for $f\in C_0(G_\om,\iota)$ we can compute:
\begin{align*}
\big(\tilde\rt_{(s,1)}(f)\circ \mathfrak{s}\big)(t)=&f((t,1)(s,1))=f(ts, \om(t,s))=\overline{\om(t,s)}f(ts,1)\\
&=\big({u_\om(s)}\rt_s(f\circ \mathfrak{s})\big)(t).
\end{align*}
Therefore, using $U_\om=\tilde\phi\circ u_\om$ and the $\rt-\beta$ equivariance of $\tilde\phi$,  we now conclude
\begin{align*}
\Ad(\tilde\rt\otimes \beta)_s\big(\tilde\phi(f\circ\mathfrak{s})ab^*\tilde\phi(h\circ\mathfrak{s}))\big)&=\tilde\phi({u_\om}(s)\rt_s(f))\beta_s(ab^*)\tilde\phi(\rt_s(h)u_\om(s)^*)\\
&=\Ad U_\om(s)\circ \beta_s(_B\lk f\otimes a, h\otimes b\rk).
\end{align*}
and the result follows. 
\end{proof}

We now show that Proposition~\ref{pro:condition-iso-B=Bom} applies for dual coactions and describe the expected 
result of the deformed coactions in this case. 

\begin{proposition}\label{pro:dual-coactions-deformation}
Let $(A,\delta)=(A_0\rtimes_{\alpha,\mu}G,\dual\alpha)$ be a dual coaction where $\alpha$ is some $G$-action on a \cstar{}algebra $A_0$ and $\rtimes_\mu$ is a duality crossed-product functor. Then for every $2$-cocycle $\omega$, the deformed coaction  $(A^\om_\mu,\delta^\om_\mu)$ is canonically isomorphic to $(A_0\rtimes_{\alpha_\omega,\mu}G,\dual{\alpha_\om})$, where 
$\alpha_\om=(\alpha,\iota)$ is the action $\alpha$ tensored with the twisted action $(\id, \iota)$ of 
$(G_\omega,\T)$ on $\C$ as in Remark~\ref{rem-twist}.    
\end{proposition}
\begin{proof}
First recall that $(A_0\rtimes_{\alpha,\mu}G,\dual\alpha)$ is a $\mu$-coaction. Its crossed product is
$$B=A_0\rtimes_{\alpha,\mu}G\rtimes_{\dual\alpha}\dualG\cong A_0\otimes\K(L^2(G))$$
and the bidual action $\beta=\dual{\dual\alpha}$ corresponds to $\alpha\otimes\Ad\rho$.
Moreover, the structure homomorphism $\phi\colon \contz(G)\to \M(B)$ corresponds to $1\otimes M$, where 
$M\colon \contz(G)\to\M(\K(L^2(G)))=\B(L^2(G))$ denotes the multiplication representation. 
This extends to $L^\infty(G)$ so that Proposition~\ref{pro:condition-iso-B=Bom} applies for our system
$(B,\beta,\phi)$. Then  $U_\om(s)$ corresponds to multiplication by $t\mapsto \bar\omega(t,s)$ and therefore 
the action $\beta=\alpha\otimes\Ad\rho$ gets deformed into $\beta^{\bar\om}=\alpha\otimes \Ad\rho^{\bar\om}$, with $\rho^{\bar\om}$ defined as in Remark~\ref{right-regular}. 
But it follows from \cite{Quigg:Duality_twisted}*{Theorem~3.1} or \cite{Quigg:Full}*{Theorem~3.6} that this is precisely the action on $A_0\otimes \K(L^2(G))$ which corresponds to the bidual action on 
$A_0\rtimes_{\alpha_\om,\mu}G\rtimes_{\dual\alpha_\om}\dualG$ via the Imai-Takai duality isomorphism
$$A_0\rtimes_{\alpha_\om,\mu}G\rtimes_{\dual\alpha_\om}\dualG\cong A_0\otimes\K(L^2(G)).$$
Since the dual coaction $\dual\alpha_\om$ on $A_0\rtimes_{\alpha_\om,\mu}G$ is a $\mu$-coaction (this
is a special case of the situation described in~\ref{eq:mu-crossed-product-Fell-bundle}), it follows that $(A^\om_\mu,\delta^\om_\mu)\cong (A_0\rtimes_{\alpha_\om,\mu}G,\dual\alpha_\om)$.
\end{proof}

\begin{example}
As a special class of examples, we get that all twisted exotic group algebras are deformations of the untwisted group algebras: given any duality crossed-product functor (like the maximal or the reduced) $\rtimes_\mu$ for a locally 
compact group $G$, consider its group \cstar{}algebra $C^*_\mu(G)=\C\rtimes_{\id,\mu} G$. This is a special case of Proposition~\ref{pro:dual-coactions-deformation}, so that the $\omega$-deformation 
of $(C^*_\mu(G),\delta_\mu)$ is (isomorphic to) 
$(C^*_\mu(G,\omega),\delta_\mu^\omega)=(\C\rtimes_{\id_\om,\mu}G,\dual{\id_\om})$, where we write
$\id_\om$ for the twisted action $(\id,\iota)$ of the pair $(G_\om,\T)$.
\end{example}

\subsection{Compatibility with products}
We are now going to draw some consequences out of our 
construction of deformed cosystems by twists $\sigma$ for $G$.
The first one is an analogue of \cite{BNS}*{Proposition 4.4}. 

\begin{proposition}\label{prop-cocycleproduct}
Let $(B,\beta,\phi)$ be a weak $G\rtimes G$-algebra. 
Then 
\begin{equation}\label{eq-deform-tensorproduct}
(B_{\sigma\cdot\sigma'}, \beta_{\sigma\cdot\sigma'}, \phi_{\sigma\cdot\sigma'})\cong ((B_{\sigma'})_{\sigma}, (\beta_\sigma')_{\sigma}, (\phi_{\sigma'})_{\sigma})
\end{equation}
for every pair of twists $\sigma=(\T\into G_{\sigma}\onto G), \sigma'=(\T\into G_{\sigma'}\onto G)$.  As a consequence, we  get
\begin{equation}\label{eq-iterateom}
(A_\mu^{\sigma\cdot\sigma'}, \delta_\mu^{\sigma\cdot\sigma'})\cong ((A^{\sigma'}_\mu)^{\sigma}_\mu, (\delta_\mu^{\sigma'})_\mu^{\sigma})
\end{equation}
starting with any $\mu$-coaction $(A,\delta)$ for a duality functor $\rtimes_\mu$.
\end{proposition}
\begin{proof}
Let $\sigma=(\T\into G_\sigma\onto G)$ and $\sigma'=(\T\into G_{\sigma'}\onto G)$ be given and recall
the construction of the extension product $\sigma\cdot\sigma'=(\T\into G_{\sigma\cdot \sigma'}\onto G)$ with
$G_{\sigma\cdot\sigma'}=G_{\sigma}*G_{\sigma'}=(G_\sigma\times_GG_{\sigma'})/\T$ as in \eqref{eq-productoftwists}.
Consider the map
\begin{equation}\label{iso-linebundle}
\begin{split}
\Theta: C_0(G_\sigma, \iota)\otimes_{C_0(G)} C_0(G_{\sigma'}, \iota)\mapsto C_0(G_{\sigma\cdot\sigma'}, \iota); \\
\Theta(\xi\otimes \eta)([g_1,g_2])= \xi(g_1)\eta(g_2).
\end{split}
\end{equation}
To see that $\Theta$ is a well-defined module map, we show that it preserves the left and right $C_0(G)$-valued inner products. 
For this  let $\xi,\xi'\in C_0(G_\sigma,\iota)$ and $\eta, \eta'\in C_0(G_{\sigma'},\iota)$ be given. Then, writing $g:=q([g_1,g_2])$ (which equals $q(g_1)=q(g_2)$ by construction
of $\sigma\cdot\sigma'$), we compute
\begin{align*}
&\braket{\xi\otimes \eta}{\xi'\otimes \eta'}_{C_0(G)}(g)=\braket{\eta}{\braket{\xi}{\xi'}_{C_0(G)}\eta'}_{C_0(G)}(g)\\
&= \overline{\eta{(g_2}}(\braket{\xi}{\xi'}_{C_0(G)}\eta')(g_2)=\overline{\eta{(g_2)}}\overline{\xi(g_1)}\xi'(g_1)\eta'(g_2)\\
&=\overline{\Theta(\xi\otimes \eta)([g_1,g_2])}\Theta(\xi'\otimes \eta')([g_1,g_2])=\braket{\Theta(\xi\otimes\eta)}{\Theta(\xi'\otimes\eta')}_{C_0(G)}(g).
\end{align*}
A similar computation shows that it also preserves the right inner products and 
it is clear that $\Theta$ is compatible with the left and right actions of $C_0(G)$. 
It is routine to check that $\Theta$ has dense image. Hence it is an isomorphism 
of equivalence bimodules. The group $G_{\sigma\cdot\sigma'}$ acts on 
$C_0(G_\sigma, \iota)\otimes_{C_0(G)} C_0(G_{\sigma'}, \iota)$ via the diagonal action 
$$\tilde\rt_{[g_1,g_2]}(\xi\otimes \eta)=\tilde\rt^\sigma_{g_1}(\xi)\otimes\tilde\rt^{\sigma'}_{g_2}(\eta)$$
and it is easy to check that $\Theta$ transforms this action to the right translation action
$\tilde\rt^{\sigma\cdot\sigma'}$ of $G_{\sigma\cdot\sigma'}$ on $C_0(G_{\sigma\cdot\sigma'}, \iota)$.
It follows from this that we obtain an $\tilde\rt^\sigma\otimes\tilde\rt^{\sigma'}\otimes \beta-\tilde\rt^{\sigma\cdot\sigma'}\otimes\beta$ equivariant
isomorphism $\Theta\otimes\id_B$ of right Hilbert $B$-modules
$$C_0(G_\sigma, \iota)\otimes_{C_0(G)} C_0(G_{\sigma'},\iota)\otimes_{C_0(G)}B\cong C_0(G_{\sigma\cdot\sigma'}, \iota)\otimes_{C_0(G)}B,$$
which also preserves the left action of $C_0(G)$ on these modules.
It follows that 
\begin{align*}
B_{\sigma\cdot \sigma'}&=\K(C_0(G_{\sigma\cdot\sigma'}, \iota)\otimes_{C_0(G)}B)
\cong \K(C_0(G_\sigma, \iota)\otimes_{C_0(G)} C_0(G_{\sigma'}, \iota)\otimes_{C_0(G)}B))\\
&\cong \K(C_0(G_\sigma, \iota)\otimes_{C_0(G)} \K(C_0(G_{\sigma'},\iota)\otimes_{C_0(G)}B))\\
&=\K(C_0(G_\sigma, \iota)\otimes_{C_0(G)}B_{\sigma'})
=(B_{\sigma'})_\sigma
\end{align*}
and all isomorphisms and identifications above are $G$-equivariant and preserve the left 
actions of $C_0(G)$. 
\end{proof}

\subsection{Nuclearity}
In this section we give conditions for nuclearity of deformed \cstar{}algebras.
One of the first results in this direction was obtained by Rieffel in \cite{Rief-K}*{Theorem 4.1}, where he proved that the deformed \cstar{}algebra $A^\omega$ by an action $\beta:\R^n\car A$ via a (continuous) $2$-cocycle $\omega$ on $\R^n$ 
is nuclear if and only if $A$ is nuclear. This result was extended to deformations by actions of abelian groups by Kasprzak in \cite{Kasprzak:Rieffel}*{Theorem~3.10}. In \cite{Yamashita}*{Proposition~12}, Yamashita proved a version of this result for coactions of discrete groups, 
assuming Exel's approximation property for the underlying Fell bundle. We are now
going to generalize all these results to coactions of locally compact groups below.

Below we shall use the theory of amenable actions of locally compact groups on \cstar{}algebras as introduced in \cite{BEW:amenable}. 
We shall give specific references for the results we need throughout the proof and refer to \cite{BEW:amenable} for further details.

\begin{theorem}\label{theo:nuclear}
    Let $(A,\delta)$ be a $\mu$-coaction of $G$ for some correspondence crossed-product functor $\rtimes_\mu$,
   and let $(B,\beta,\phi)=(A\rtimes_\delta\dualG,\dual\delta,j_G)$ be the corresponding weak $G\rtimes G$-algebra. Assume that $\beta$ is an amenable action. Then for every twist $[\sigma]\in \Twist(G)$, we have
   \begin{equation}\label{eq:amenable-equal}
        A^\sigma:=A^\sigma_\max=A^\sigma_\mu=A^\sigma_\red.
    \end{equation}
    and  $A^\sigma$ is nuclear if and only if $A$ is nuclear. 
\end{theorem}
\begin{proof} Let $\alpha=\Ad\tildeV\colon G\car \K(\H)$ be an action corresponding to $\sigma$ as in 
Corollary~\ref{cor-twist-action}.
We first notice that if $\beta$ is amenable, then so is $\beta\otimes\alpha:G\car B\otimes \K(\H)$ by  \cite{BEW:amenable}*{Theorem 5.16}.
Since amenability is preserved by equivariant Morita equivalence by
\cite{BEW:amenable}*{Proposition 3.20}, it follows that the action $\beta_\sigma:G\car B_\sigma$ is also amenable. 
By \cite{BEW:amenable}*{Proposition 5.10} we know 
that all crossed products by amenable actions coincide. Hence 
$$B_\sigma\rtimes_{\beta_\sigma,\max}G=B_\sigma\rtimes_{\beta_\sigma,\mu}G=B_\sigma\rtimes_{\beta_\sigma,\red}G.$$ 
By construction, $A^\sigma_\mu$ is the $\mu$-generalized fixed-point algebra $(B_{\sigma})^{(G,\beta_\sigma)}_\mu$
of the weak $G\rtimes G$-algebra $(B_\sigma,\beta_\sigma, \phi_\sigma)$, which is the completion of the generalized fixed-point algebra $(B_\sigma)_c^G$
by a norm which only depends on the norm on $B_\sigma\rtimes_{\beta_\sigma,\mu}G$. Hence, the above equality of crossed products implies
~\eqref{eq:amenable-equal}.

Now assume that $A$ is nuclear. Then so is $B=A\rtimes_\delta\dualG$.
Indeed, this follows by duality as 
$B\rtimes_{\beta,r}G=A\rtimes_\delta \dualG\rtimes_{\dual\delta,\red}G\cong A_r\otimes\K(L^2(G))$, where $(A_r,\delta_r)$ denotes the normalization of $(A,\delta)$. But $A_r$, being a quotient of $A$, is nuclear. Therefore $B\rtimes_{\beta,\red}G$ is nuclear.
This can only be true if $B$ is nuclear, by \cite{BEW:amenable}*{Theorem~7.2}. Since $\beta\otimes \alpha$ is amenable, 
the crossed product $(B\otimes\K(\H))\rtimes_{\beta\otimes\alpha}G$ is nuclear, again by \cite{BEW:amenable}*{Theorem~7.2}. From this it follows that the Morita equivalent crossed product $B_\sigma\rtimes_{\beta_\sigma}G$ is also nuclear. Since $A^\sigma$ is 
Morita equivalent to $B_\sigma\rtimes_{\beta_\sigma}G$, it is nuclear as well. 

Conversely, if $A^\sigma$ is nuclear, we can apply the above argument to the deformation 
$$A=A^{\id}\cong A^{\bar\sigma\cdot{\sigma}}\cong (A^\sigma)^{\bar\sigma}$$
of $A^\sigma$, where $\bar\sigma$ denotes the inverse of $\sigma$ as in \eqref{eq-inversetwist}. Then $A^\sigma$ nuclear implies 
$A$ nuclear.
\end{proof}

\begin{remark}
If $G$ is amenable, then all actions of $G$ are amenable, so the above result applies. In particular it generalizes the previous results by Rieffel and Kasprzak for $\R^n$ or abelian groups (\cites{Rief-K,Kasprzak:Rieffel}) mentioned above.

If $G$ is discrete and $(A,\delta)$ is a $G$-coaction with $A$ nuclear, there exists a Fell bundle $\mathcal A$ over $G$ such that 
$A_r=C_{\red}^*(\A)$, the reduced cross-sectional $C^*$-algebra of $\mathcal A$, from which it follows that 
$\A$ has Exel's approximation property (see \cite{BEW:amenable}*{Corollary 4.11}). In \cite{Yamashita} Yamashita describes a deformed Fell bundle $\mathcal A^\sigma$ (generalized to non-discrete groups in \cite{BE:Fellbundles}) such that our deformed algebra $A^\sigma$ is 
a cross-sectional $C^*$-algebra of  $\mathcal A^\sigma$. It follows then from \cite{Yamashita}*{Lemma 10} that $\mathcal A^\sigma$ also has the approximation property, from which it follows that $A^\sigma$ is nuclear as well (see \cite{Yamashita}*{Proposition 11}). 
Note  that the approximation property for $\mathcal A$ implies that   the dual action $\beta=\dual\delta$ on $B=A\rtimes_\delta\dualG$ is automatically amenable, so Theorem \ref{theo:nuclear} also applies in this situation.

We do not know whether a similar result holds true for general locally compact groups.  It would mean that for any action $\beta:G\car B$
with  $B\rtimes_rG$ nuclear, the crossed product $(B\otimes\K(\H))\rtimes_{\beta\otimes \alpha,r}G$  is nuclear as well. Note that, in general, nuclearity of $B\rtimes_{\beta,r}G$ does not imply amenability of $\beta$ -- counterexamples 
are given by crossed products $\C\rtimes_rG=C_r^*(G)$ for nonamenable (almost) connected groups $G$ (e.g., $G=\SL(2,\R)$), 
which are nuclear by a famous theorem of Connes \cite{Connes-Classification}.
A special version of this question would ask the following: if $G$ is a locally compact group 
such that $C^*_r(G)$ is nuclear, does it follow that the twisted group algebra $C_r^*(G,\om)$ is nuclear for every cocycle $\om\in Z^2(G,\T)$?
\end{remark}

\section{Continuity}\label{sec:continuity}

In this section we want to consider the question of how the deformed algebras depend 
on the parameters in a `continuous' way.  We start by introducing a notion of a bundle of \cstar{}algebras, which is slightly more general than the one introduced by Kirchberg and Wassermann in \cite{KW:continuousbundles}.
Recall first that a $C_0(X)$-algebra is a \cstar{}algebra $\A$ equipped with 
 a nondegenerate \Star{}homomorphism $\Phi:C_0(X)\to Z\M(\A)$,  the center of the multiplier of $\A$.

\begin{definition}\label{def-bundles}
A {\em bundle of \cstar{}algebras} $(\A, X, q_x:\A\to A_x)$ over the locally compact space $X$ consists of 
a $C_0(X)$-algebra $\A$ together with a collection of quotient maps $q_x:\A\to A_x; x\mapsto a_x$, for all  $x\in X$, such that
\begin{itemize}
    \item[{(*)}] For all $f\in C_0(X)$, $a\in \A$, and $x\in X$, we have $(\Phi(f)\cdot a)_x=f(x)a_x$.
\end{itemize}
We say that $(\A, X, q_x:\A\to A_x)$ is \emph{faithful}, if $\|a\|=\sup_{x\in X}\|a_x\|$ for all $a\in \A$.
A faithful {\em bundle of \cstar{}algebras} $(\A, X, q_x:\A\to A_x)$ is said to be  {\em continuous} 
 (resp. \emph{upper semicontinuous}) 
    if for all $a\in \A$ the map $x\mapsto \|a_x\|$ is continuous (resp. upper semicontinuous).
\end{definition}

Notice that a faithful bundle of \cstar{}algebras $(\A, X, q_x:\A\mapsto A_x)$ in our sense is what Kirchberg and Wassermann \cite{KW:continuousbundles} call a bundle of \cstar{}algebras. Since we require faithfulness in our definitions of (semi)continuous bundles of \cstar{}algebras, these notions coincide with the ones given in \cite{KW:continuousbundles}. 

Note that every $C_0(X)$-algebra $\A$ determines a `maximal' {\em bundle of \cstar{}algebras} $(\A, X, q^{\max}_x:\A\to A_x^{\max})$
by defining $A_x^{\max}:=\A/I_x$ with $I_x:=\Phi(C_0(X\setminus \{x\}))\A$. 
From the definition of a bundle of \cstar{}algebras
$(\A, X, q_x:\A\to A_x)$, it is clear that the ideal $I_x$ lies in the kernel of $q_x$, hence we see that $A_x$ must be a quotient of 
$A^{\max}_x$. It follows from a combination of \cite{Dana:book}*{Theorems C.25 and C.26} that $A_x^{\max}=A_x$ for every $x\in X$ if and only if $(\A,X, q_x:\A\to A_x)$ is an upper semicontinuous bundle of \cstar{}algebras. 
If we combine this with \cite{KW:continuousbundles}*{Lemma~2.3} and \cite{Dana:book}*{Proposition~C.5} we also see that  $(\A, X, q_x:\A\to A_x)$ is upper 
semicontinuous if and only if $\Prim(A_x)=\Prim(A_x^\max)$ for all $x\in X$, and this is equivalent to
\begin{equation}\label{eq-Primbundle}
\Prim(\A)=\cup_{x\in X}\Prim(A_x).
\end{equation}
Here we identify $\Prim(A/J)$ of a quotient $A/J$ of $A$  with the set of primitive ideals of $A$  containing $J$. 
If $(\A, X, q_x:\A\to A_x)$ is upper semicontinuous, the  therefore well-defined map
$$\varphi:\Prim(\A)\to X; \quad\varphi(P)=x\Leftrightarrow P\in \Prim(A_x)$$
is continuous
and it follows from \cite{Dana:book}*{Theorem~C.26} that $(\A, X, q_x:\A\to A_x)$ is a continuous bundle
if and only if the map $\varphi:\Prim(\A)\to X$ is also open.

\begin{example}\label{ex-crossed-bundle}
A typical example of a bundle of \cstar{}algebras, which plays a central role in this paper, is given as follows:
suppose that $\B$ is any $C_0(X)$-algebra with associated upper semicontinuous bundle $(\B, X, q_x:\B\to B_x)$ as explained above 
(i.e., we have $B_x=B^{\max}_x$). Let $\beta:G\car \B$ be a $C_0(X)$-linear action of the locally compact group $G$ on 
$\B$, i.e., we have $\beta_g(\Phi(f)b)=\Phi(f)\beta_g(b)$ for all $f\in C_0(X), b\in \B$. Then $\beta$ induces an action 
$\beta^x:G\car B_x$ on each fibre $B_x$ by defining
$$\beta^x_g(b_x):=(\beta_g(b))_x.$$
Now let $\rtimes_\mu$ be any (exotic) crossed-product functor for $G$. Then the crossed product 
$\B\rtimes_{\beta,\mu}G$ carries a canonical structure of a $C_0(X)$-algebra via the composition 
\begin{equation}\label{eq-bundle-crossed}
\Psi:=i_{\B}\circ \Phi: C_0(X)\stackrel{\Phi_X}{\longrightarrow} \M(\B)\stackrel{i_{\B}}{\longrightarrow} \M(\B\rtimes_{\beta,\mu}G),\end{equation}
where the second map is the unique extension of the inclusion $i_{\B}: \B\to \M(\B\rtimes_{\beta,\mu}G)$ to $\M(\B)$. Since the image of $\Phi$ lies in the center $Z\M(\B)$ and is invariant under the $G$-action by our assumptions, it is easily checked that the image of $\Psi$ lies in $Z\M(\B\rtimes_{\beta,\mu}G)$.
We then obtain a bundle of \cstar{}algebras 
$$(\B\rtimes_{\beta,\mu}G, X, q_x\rtimes_\mu G:\B\rtimes_{\beta,\mu} G\to B_x\rtimes_{\beta^x,\mu} G)$$
with fibres $B_x\rtimes_{\beta^x,\mu} G$ as in our definition. 
It is not clear to us, whether this bundle is always faithful, although it is true very often (see Proposition~\ref{prop-crossed-product-bundle} below).
\end{example}

For general crossed-product functors, we do not know to much about the continuity properties of the crossed-product bundles 
from Example~\ref{ex-crossed-bundle}. But we have the following result which is essentially due to Kirchberg and Wassermann \cite{KW:exact-groups}:

\begin{proposition}\label{prop-crossed-product-bundle}
Suppose that $\beta:G\car \B$ is a $C_0(X)$-linear action of a locally compact group on the $C_0(X)$-algebra $\B$ and let $\rtimes_\mu$ be any crossed-product functor. 
Then the following are true:
\begin{enumerate}
    \item if $\rtimes_\mu$ is an exact crossed-product functor (e.g., if $\rtimes_\mu=\rtimes_{\max}$ or if $\rtimes_\mu=\rtimes_r$ and $G$ is an exact group), then $(\B\rtimes_{\beta,\mu}G, X, q_x\rtimes_\mu G:\B\rtimes_{\beta,\mu} G\to B_x\rtimes_{\beta^x,\mu} G)$
    is upper semicontinuous.
    \item If $G$ is an exact group and $(\B,X, q_x:\B\to B_x)$ is a continuous bundle, then the bundle of reduced crossed products
     $(\B\rtimes_{\beta,r}G, X, q_x\rtimes_r G:\B\rtimes_{\beta,r} G\to B_x\rtimes_{\beta^x,r} G)$ is continuous as well.
\end{enumerate}
\end{proposition}
\begin{proof} 
The proof of item (1) is an easy exercise using the exactness of 
$$0\to I_x\rtimes_{\beta,\mu}G\to \B\rtimes_{\beta,\mu}G\to B_x\rtimes_{\beta_x,\mu}G\to 0,$$
with $I_x=\Phi(C_0(X\setminus\{x\})\B$ and the equation
$$I_x\rtimes_{\beta,\mu}G=i_{\B}(\Phi(C_0(X\setminus\{x\}))\big(\B\rtimes_{\beta,\mu}G\big).$$ 
The latter follows from
$$i_{\B}(\Phi(C_0(X\setminus\{x\}))\big(C_c(G)\odot \B)=C_c(G)\odot i_{\B}(\Phi(C_0(X\setminus\{x\}))\B=C_c(G)\odot I_x,$$
for the algebraic tensor products, which lie densely in the respective crossed products. 
The second item has been shown in \cite{KW:exact-groups}. 
\end{proof}

To obtain our main results, we also need to know that the continuity properties are preserved under Morita equivalence of bundles. Although the following lemma is more or less folklore, we include a short proof for completeness.

\begin{lemma}\label{lem-Morita-bundles}
Suppose that $(\A,X,q^{\A}_x:\A\to A_x)$ and $(\B, X, q^{\B}_x:\B\to B_x)$ are bundles of \cstar{}algebras over $X$.
Let $\X$ be an $\A-\B$  equivalence bimodule which is $C_0(X)$-linear in the sense that
$$\Phi_{\A}(f)\xi=\xi\Phi_\B(f)\quad \forall \xi\in \X, f\in C_0(X),$$
where $\Phi_\A$ and $\Phi_\B$ denote the structure maps for $\A$ and $\B$, respectively. Suppose further that 
for each $x\in X$ the module $\X$ factors through an $A_x-B_x$ equivalence bimodule $\X_x$. (We then call $\X$ 
 an
$(\A, X, q_x^\A:\A\to A_x)-(\B, X, q^{\B}_x:\B\to B_x)$ {\em equivalence bimodule}.)
Then the following hold
\begin{enumerate}
    \item $(\A,X,q^{\A}_x:\A\to A_x)$ is faithful if and only if $(\B, X, q^{\B}_x:\B\to B_x)$ is faithful.
    \item $(\A,X,q^{\A}_x:\A\to A_x)$ is continuous (resp. upper semicontinuous)
    if and only if the same holds for $(\B, X, q^{\B}_x:\B\to B_x)$.
\end{enumerate}
\end{lemma}

\begin{proof}
Let $I:=\{b\in \B: \forall x\in X: \|b_x\|=0\}$. Then $I$ is a closed ideal in $\B$ and $(\B, X, q^{\B}_x:\B\to B_x)$
is faithful if and only if $I=\{0\}$. The ideal $I$ is clearly invariant under the action of $C_0(X)$.
By the Rieffel correspondence and $C_0(X)$-linearity of $\X$, the submodule $\X I$ induces a Morita equialence 
between the ideal $J:=\overline{_{\A}\braket{\X I}{\X I}}$ of $\A$ and $I$. In particular, $J=\{0\}$ if and only $I=\{0\}$.
But for all $\xi,\eta\in \X I$, we have
\begin{equation}\label{eq-innerfaithful}
\|(_\A\braket{\xi}{\eta})_x\|=\|_{A_x}\braket{\xi_x}{\eta_x}\|=\|\braket{\xi_x}{\eta_x}_{B_x}\|=\|\big(\braket{\xi}{\eta}_\B)_x\|,
\end{equation}
where $\xi_x, \eta_x\in \X_x$ denote the images of $\xi,\eta$ under the quotient map.
Since the linear combinations of those inner products are dense in $J$ and $I$, respectively, it follows 
 that $\|a_x\|=0$ for all  $a\in J$ and all $x\in X$. It follows that 
$(\A,X,q^{\A}_x:\A\to A_x)$ is not faithful if  $(\B, X, q^{\B}_x:\B\to B_x)$ is not faithful. By symmetry of Morita equivalence, this proves item (1). 

In order to prove item (2), we first recall that $(\B, X, q^{\B}_x:\B\to B_x)$ is upper semicontinuous
if and only if $B_x=\B/I_x$ with $I_x:=C_0(X\setminus\{x\})\B$ for all $x\in X$. Write $J_x:=C_0(X\setminus\{x\})\A$. 
The $C_0(X)$-linearity of $\X$ implies that 
$Y_x:=C_0(X\setminus \{x\})\cdot\X$ is a $J_x-I_x$ equivalence bimodule, such that $\X/Y_x$ becomes an
$\A/J_x-\B/I_x$ equivalence bimodule. By our assumptions, this factors further to the $A_x-B_x$ bimodule $\X_x$ of the lemma.
Thus it follows from the Rieffel correspondence, that $A_x=\A/J_x$ if and only if $B_x=\B/I_x$ for all $x\in X$.
This proves the case of upper semicontinuous bundles. 

Finally, for continuity we use the fact that the Morita equivalence $\X$ induces a homeomorphism
between $\Prim(\A)$ and $\Prim(\B)$ by sending a primitive ideal $P\in\Prim(B)$ to the ideal 
$Q:=\overline{_\A\braket{\X P}{\X P}}$ of $\A$. By $C_0(X)$-linearity it is easy to see that this bijection 
maps $\Prim(B_x)$ onto $\Prim(A_x)$ for all $x\in X$. Thus it follows that the canonical map
$\varphi_\B:\Prim(\B)\to X$ is open if and only if the canonical map $\varphi_\A:\Prim(\A)\to X$ is open as well.
By the discussion preceding Example~\ref{ex-crossed-bundle} above, this finishes the proof.
\end{proof}

Suppose now that $\B$ is a \cstar{}algebra equipped with a nondegenerate \Star{}homomor\-phism
$\Phi:C_0(X\times G)\to \M(\B)$ such that the extension of $\Phi$ to $C_b(X\times G)$ sends all functions which are constant in the $G$-direction into the center $Z\M(\B)$ of the multiplier algebra.
In this way, $\Phi$ `restricts' to a nondegenerate \Star{}homomorphism $\Phi_X:C_0(X)\to Z\M(\B)$ 
which turns $\B$ into a $C_0(X)$-algebra and we obtain the upper semicontinuous bundle 
$(\B, X, q_x:\B\to B_x)$ with `maximal' fibres $B_x=\B/I_x$, where  $I_x=C_0(X\setminus\{x\})\B$.
Assume further that there is a $C_0(X)$-linear (i.e., a fibrewise) action $\beta:G\car \B$ 
such that $\Phi$ is $\rt-\beta$ equivariant, where here $\rt:G\car C_0(X\times G)$ denotes right translation in the $G$-variable leaving $X$ invariant. Then $\Phi$ induces $\rt-\beta_x$ equivariant structure maps 
$$\phi_x: C_0(\{x\}\times G)\to \M(B_x); \quad \phi_x(f(x,\cdot))=\Phi(f)+I_x$$
for all $x\in X$.
Thus, identifying $\{x\}\times G$ with $G$ we obtain a family of weak $G\rtimes G$-algebras $(B_x, \beta_x,\phi_x)$ indexed by $x\in X$.

\begin{definition}\label{def-semicontbundle}
A triple  $(\B, \beta, \Phi)$ as above is called an \emph{upper semicontinuous bundle of weak $G\rtimes G$-algebras}  over $X$
with fibres  $(B_x, \beta_x,\phi_x)$. If, in addition, for each $b\in \B$ the function $x\mapsto \|b_x\|$ is continuous
on $X$, then we call $(\B, \beta, \Phi)$ a \emph{continuous bundle of weak $G\rtimes G$-algebras}  over $X$.
\end{definition}

\begin{example}\label{eq-trivial}
The most obvious example is given by the trivial \cstar{}algebra bundle $\B=C_0(X,B)$ over $X$ with constant fibre $B_x=B$ for all $x\in X$, together with some fixed nondegenerate \Star{}homomorphism $\phi:C_0(G)\to \M(B)$ and a $C_0(X)$-linear action 
$\beta: G\car C_0(X,B)$ such that $\phi$ is $\rt-\beta^x$ equivariant for each fibre action $\beta^x: G\car B$.
Then  $(C_0(X,B), \beta, \Phi)$, with $\Phi:=\id_X\otimes \phi$,  is a  continuous family 
of weak $G\rtimes G$ algebras with fibres $(B, \beta^x,\phi)$. 
\end{example}

Given a fixed duality crossed-product functor $\rtimes_\mu$ on $G$, if $(\B,\beta,\Phi)$ is an upper semicontinuous 
family of weak $G\rtimes G$-algebras with fibres $(B_x, \beta_x, \phi_x)$ we obtain a family of cosystems 
$x\mapsto (A^x_\mu, \delta^x_\mu)$ via (exotic) Landstad duality.
The restriction, say $\Phi_G$, of $\Phi:C_b(X\times G)\to \M(\B)$ to $C_0(G)$,
gives $\B$ the structure of a weak $G\rtimes G$-algebra $(\B, \beta, \Phi_G)$ and, for any duality crossed-product functor $\rtimes_\mu$ on $G$, we also obtain a cosystem $(\A_\mu,\delta_\mu)$ via  Landstad duality. 
In what follows next, we want to study the question, under what conditions 
$(\A_\mu,\delta_\mu)$ becomes a (semi)continuous bundle of coactions with fibres $(A^x_\mu, \delta^x_\mu)$. 
Recall from Example~\ref{eq-bundle-crossed} that the crossed product 
$\B\rtimes_{\beta,\mu}G$ gives rise to a canonical bundle of \cstar{}algebras  
$(\B\rtimes_{\beta,\mu}G, X, q_x\rtimes_\mu G:\B\rtimes_\mu G\to B_x\rtimes_{\mu}G)$.

\begin{theorem}\label{thm-bundle1}
Let $(\B, \beta,\Phi)$ be an upper semicontinuous family of weak $G\rtimes G$-algebras $(B_x,\beta_x,\phi_x)$ over $X$ as above. 
Fix a duality crossed-product functor $\rtimes_\mu$ for $G$ and let
$(\A_\mu, \delta_\mu)$ be as above. Then there exists a unique nondegenerate \Star{}homomorphism $\Psi_{\A_\mu}:C_0(X)\to Z\M(\A_\mu)$, giving $\A_\mu$ the structure of a $C_0(X)$-algebra, such that the following hold:
\begin{enumerate}
\item The $(\A_\mu, \delta_\mu)-(\B\rtimes_{\beta,\mu}G, \widehat{\beta}_\mu)$ equivalence bimodule  $(\F_\mu(\B), \delta_{\F_\mu(\B)})$ of Proposition~\ref{prop-Landstad} factors for each $x\in X$ through the $(A^x_\mu, \delta^x_\mu)-(B_x\rtimes_{\beta_x,\mu}G, \widehat{\beta_x}_\mu)$ equivalence bimodule $(\F_\mu(B_x), \delta_{\F_\mu(B_x)})$.
\item For all $x\in X$ there are canonical $\delta_\mu-\delta^x_\mu$ equivariant quotient maps $q_x^\A:\A_\mu\to A^x_\mu$ and there is a canonical structure map $\Phi_\A:C_0(X)\to Z\M(\A_\mu)$  
giving   $(\A_\mu, X, q_x^\A:\A_\mu\to A_\mu^x)$ the structure of a bundle of \cstar{}algebras.
  \item The $\A_\mu-\B\rtimes_{\beta,\mu}G$ equivalence bimodule 
$\F_\mu(\B)$ induces a
$$(\A_\mu, X, q_x^\A:\A_\mu\to A_\mu^x)-(\B\rtimes_{\beta,\mu}G, X, q_x\rtimes_\mu G:\B\rtimes_\mu G\to B_x\rtimes_{\mu}G)$$
Morita equivalence as in Lemma~\ref{lem-Morita-bundles}. 
\end{enumerate}
\end{theorem}
\begin{proof}
The proof of this theorem follows almost directly from the construction of the module $\F_\mu(\B)$ as explained briefly in Section~\ref{sec:Landstad}. Indeed, since the image of $\Psi_G(C_0(G))$ commutes with the image of $\Psi_X(C_0(X))$ inside $\M(\B)$, 
there is a nondegenerate \Star{}homomorphism
$$\Psi_{\F_\mu(\B)}: C_0(X)\to \mathcal L(\F(\B)_\mu),\quad  \Psi_{\F(\B)_\mu}(f)b:= \Psi_X(f)b$$
for $f\in C_0(X)$, $b\in \F_c(\B)=\Psi_G(C_c(G))B$. The image  commutes with left multiplication of elements in $\B_c^{G,\beta}$, 
the fixed-point algebra with compact supports~\eqref{eq-BcG},
which is a dense subalgebra of $\A_\mu\cong \K(\F_\mu(\B))$. We therefore may regard 
$\Psi_{\F_\mu(\B)}$ as a nondegenerate \Star{}homomorphism $\Psi_{\A_\mu}:C_0(X)\to Z\M(\A_\mu)$. 
Note that on elements $m$ in the dense subalgebra $\B_c^{G,\beta}$ of $\A_\mu$ we simply have
\begin{equation}\label{C0(X)-action}
\Psi_{\A_\mu}(f)m=\Phi_X(f)m,
\end{equation}
with multiplication given inside $\M(\B)$. 

Items (1) and (2) follow immediately from the functoriality of Landstad duality as spelled out in Proposition~\ref{Landstad-functorial}, and item (3) then follows from the $C_0(X)$-linearity of $\F_\mu(\B)$.
\end{proof}

As an immediate consequence of the above theorem together with Lemma~\ref{lem-Morita-bundles}, we now get

\begin{corollary}\label{cor-continuous1}
Let $(\B, \beta,\Phi)$ and $(\A_\mu, X, q_x^\A:\A_\mu\to A_\mu^x)$ be as in Theorem~\ref{thm-bundle1}. Then $(\A_\mu, X, q_x^\A:\A_\mu\to A_\mu^x)$ is faithful (resp. continuous, resp. 
upper semicontinuous) if and only if the same holds for the crossed product bundle 
$(\B\rtimes_{\beta,\mu}G, X, q_x\rtimes_\mu G:\B\rtimes_\mu G\to B_x\rtimes_{\mu}G)$.
\end{corollary}

\begin{example}\label{ex-trivialbundle1}
Let  $(C_0(X,B), \beta, \Phi)$ be as in Example~\ref{eq-trivial} above.
Since the trivial bundle $\B=C_0(X,B)$ is always continuous, it follows from Theorem~\ref{thm-bundle1} together with Proposition~\ref{prop-crossed-product-bundle}, that 
the  bundle $(\A_\mu, X, q_x^\A:\A_\mu\to A_\mu^x)$ is always upper semicontinuous when $\rtimes_\mu$ is an exact crossed-product functor, and that the  bundle  $(\A_r, X, q_x^\A:\A_r\to A_r^x)$ corresponding to the reduced crossed product $\rtimes_r$
is a continuous bundle whenever $G$ is an exact group.
\end{example}

Suppose now that $\Sigma=(X\times\T\into \G\onto X\times G)$ is a continuous family of  twists $\sigma_x=(\T\into G_x\onto G)$
as in Definition~\ref{defn-continuous-family-twists}. As before, we shall denote the elements of $\G$ 
by pairs $(x,\tig)$ with $\tig\in G_x$.
As in \eqref{eq-groupoid-iota} let  $C_0(\G, \iota)$ be the set of functions 
$f\in C_0(\G)$ which satisfy the relation $f(x,\tig z)=\bar{z}f(x,\tig)$ for all $(x,\tig)\in \G, z\in \T$.
Then $C_0(\G, \iota)$ becomes a $C_0(X\times G)-C_0(X\times G)$ equivalence bimodule with respect to the 
canonical left and right actions of $C_0(X\times G)$ by pointwise multiplication, and left and right inner products defined by
\begin{equation}\label{eq-XxG-inner}
_{C_0(X\times G)}\braket{\xi}{\eta}(x,g):=\xi(x,\tig)\overline{\eta(x,\tig)}\quad \text{and}\quad
\braket{\xi}{\eta}_{C_0(X\times G)}(x,g):=\overline{\xi(x,\tig)}\eta(x,\tig).
\end{equation}

Now let  $L^2(\G,\iota)$ be the completion of $C_c(\G,\iota)$ as a Hilbert $C_0(X)$-module as in \eqref{eq-L2Giota}. As observed there, the right regular representation $\tilde\rho:\G\car L^2(\G,\iota)$ induces a well-defined $C_0(X)$-linear action  $\alpha:G\car \K(L^2(\G,\iota))$ 
which induces  actions $\alpha^x:=\Ad\tilde\rho^x\car \K(L^2(G_x,\iota))$ on each fibre $\K(L^2(G_x,\iota))$ as defined in \eqref{eq-reptilderho}.

Now, given a weak $G\rtimes G$-algebra $(B,\beta,\phi)$, let us denote by $\E_\Sigma(C_0(X,B))$ the right Hilbert 
$B\otimes\K(L^2(\G,\iota))$ module defined as
\begin{equation}\label{eq-EalphaC0XB}
\E_\Sigma(C_0(X,B)):=\big(C_0(\G,\iota)\otimes_{C_0(X\times G)} \contz(X,B)\big)\otimes_{C_0(X)}L^2(\G,\iota)^*.
\end{equation}
Here, the second tensor product is understood as a balanced tensor product over $\contz(X)$ in the following sense: given $\E$ a Hilbert module over a $\contz(X)$-algebra $C$ and $\F$ a Hilbert module over a $\contz(X)$-algebra $D$, we define $\E\otimes_{\contz(X)}\F$ as the quotient of the external tensor product $\E\otimes\F$, which is a Hilbert module over $C\otimes D$, by the closed subspace generated by all differences $\xi\cdot f\otimes\eta-\xi\otimes \eta\cdot f$ with $\xi\in \E$, $\eta\in \F$, $f\in \contz(X)$. Alternatively, this can be viewed as $\E\otimes_{\contz(X)}\F\cong (\E\otimes\F)_{C\otimes D}(C\otimes_{\contz(X)}D)$, where $C\otimes_{\contz(X)}D$ is the balanced tensor product of $\contz(X)$-algebras.

Again, the module in~\eqref{eq-EalphaC0XB} is `fibred' over $X$ with fibres 
$\E_{x}(B):=\E_{\sigma_x}(B)$ for all $x\in X$. 
Let $\tilde\rt$ denote the right translation action of $\G$ on $C_0(\G,\iota)$.
Precisely as in the case of a deformation by a twist $\sigma$ over $G$, one  checks that 
the diagonal action $\tilde\rt\otimes_{C_0(X\times G)}\beta\otimes_{C_0(X)} {\tilde\rho}^*$ of $\G$ factors through a well-defined  
$C_0(X)$-linear action $\gamma_\Sigma:G\car \E_\Sigma(C_0(X,B))$ which  induces the action $\gamma_x=\gamma_{\sigma_x}$ of 
Proposition~\ref{prop-Borelcocycle} on each fibre $\E_{x}(B)$. 

Let $\B_\Sigma:=\K(\E_{\Sigma}(C_0(X,B)))$ denote the algebra of compact operators equipped with the $G$-action 
$\beta_\Sigma:=\Ad\gamma_{\Sigma}$. The left action of $C_0(X\times G)$ on $C_0(\G,\iota)$ induces 
a left action, say $\Phi_\Sigma$,  of $C_0(X\times G)$ on $\E_\Sigma(C_0(X,B))$, and hence on $\B_\Sigma$. 
In particular, since the left and right 
actions of $C_0(X)$ on  $\E_{\Sigma}(B)$ commute, it follows that the image of the `restriction' $\Phi_X$ of $\Phi_\Sigma$ to  $C_0(X)$ lies in $Z\M(\B_\Sigma)$, making $\B_\Sigma$ into a $C_0(X)$-algebra. 
Since the $C_0(X)$-algebra $\K(L^2(\G,\iota))$ is a continuous bundle of compact operators $\K(L^2(G_x,\iota))$ 
(as it is a continuous-trace algebra), it follows that  $B\otimes \K(L^2(\G,\iota))$ is a continuous bundle of \cstar{}algebras 
with fibres $B\otimes\K(L^2(G_x,\iota))$.
It follows then from Lemma \ref{lem-Morita-bundles} that $(\B_\Sigma, \beta_\Sigma,\Phi_\Sigma)$ is a continuous family of weak $G\rtimes G$-algebras with fibres 
$(B_{\sigma_x}, \beta_{\sigma_x},\phi_{\sigma_x})$ for all $x\in X$.
As a direct consequence of this discussion together with Corollary~\ref{cor-continuous1} we obtain

\begin{theorem}\label{thm-bundle-actions}
Let $(B,\beta,\phi)=(A\rtimes_\delta \hatG, \widehat{\delta}, j_{C_0(G)})$ for some $\mu$-coaction $(A,\delta)$ with $\rtimes_\mu$ a duality crossed-product functor. Let 
$\Sigma=(X\times \T\into \G\onto X\times G)$ be a twist over $X\times G$,
let $(\B_\Sigma, \beta_\Sigma,\Phi_\Sigma)$ be as above
and let  $(\A_{\mu}^\Sigma, \delta_\mu^\Sigma)$ be the $\mu$-coaction associated to $(\B_\Sigma, \beta_\Sigma,\Phi_\Sigma)$ via Landstad duality.
Then the following hold:
\begin{enumerate}
\item The triple $(\B_\Sigma, \beta_\Sigma,\Phi_\Sigma)$ is a \emph{continuous family of weak $G\rtimes G$-algebras}  over $X$ with fibres 
$(B_{\sigma_x}, \beta_{\sigma_x}, \phi_{\sigma_x})$ for all $x\in X$.
 \item If $\rtimes_\mu$ is an exact crossed-product functor, then $(\A_\mu^\Sigma, X, q_x^\Sigma:\A^\alpha_\mu\to A^{\sigma_x}_\mu)$ is an upper semicontinuous bundle of \cstar{}algebras over $X$.
    \item If $G$ is an exact group, then $(\A_r^\Sigma, X, q_x^\Sigma:\A^\Sigma_r\to A^{\sigma_x}_r)$ is a continuous bundle of \cstar{}algebras over $X$.
\end{enumerate}
\end{theorem}

For later use, we also need to state the following consequence of the above discussions. Since it requires compatibility of $\rtimes_\mu$ with respect to Morita equivalences, it requires $\rtimes_\mu$ to be a {\em correspondence} crossed-product functor.

\begin{proposition}\label{prop-stabilzation}
    Let $(B,\beta,\phi)=(A\rtimes_\delta \hatG, \widehat{\delta}, j_{C_0(G)})$,  $\Sigma=(X\times \T\into \G\onto X\times G)$,
    $(\B_\Sigma, \beta_\Sigma,\Phi_\Sigma)$ and $(\A_{\mu}^\Sigma, \delta_\mu^\Sigma)$
    be as in Theorem~\ref{thm-bundle-actions},  for some correspondence crossed-product functor $\rtimes_\mu$.
    Let $\alpha:G\car \K(L^2(\G,\iota))$ be the action corresponding to $\Sigma$ 
    as in Proposition~\ref{prop-groupoid-to-action}.  
    Then  the bundle of \cstar{}algebras
$(\A_\mu^\Sigma, X, q_x^\Sigma:\A^\Sigma_\mu\to A^{\sigma^x}_\mu)$ is Morita equivalent to the 
the bundle of $\mu$-crossed products obtained from the $C_0(X)$-linear action 
$\beta\otimes \alpha$ on $B\otimes \K(L^2(\G,\iota))$ as in Example~\ref{ex-crossed-bundle}.
\end{proposition}
\begin{proof}
    This follows from composing the Morita equivalence $\F_\mu(\B)$ of Theorem~\ref{thm-bundle1}, 
    which provides a Morita equivalence between $(\A_\mu^\Sigma, X, q_x^\Sigma:\A^\Sigma_\mu\to A^{\sigma^x}_\mu)$
    and the bundle of $\mu$-crossed products $(\B\rtimes_{\beta,\mu}G, X, q_x\rtimes_\mu G:\B\rtimes_{\beta,\mu} G\to B_x\rtimes_{\beta^x,\mu} G)$,
    with the $\mu$-crossed product 
    $\E_\Sigma(C_0(X,B))\rtimes_{\gamma,\mu}G$, which provides a Morita equivalence between the bundle 
    $(\B\rtimes_{\beta,\mu}G, X, q_x\rtimes_\mu G:\B\rtimes_{\beta,\mu} G\to B_x\rtimes_{\beta^x,\mu} G)$
    and the bundle $(B\otimes \K(L^2(\G,\iota)))\rtimes_{\beta\otimes\alpha,\mu}G$ of $\mu$-crossed products over $X$ with fibres  $(B\otimes \K(L^2(G_x,\iota)))\rtimes_{\beta\otimes\alpha^x,\mu} G$.
\end{proof}

Of course, an obvious choice for a base space $X$ in Theorem~\ref{thm-bundle-actions} 
should be the space $\Twist(G)\cong H^2(G,\T)\cong  \brg$ itself, since it provides our deformation parameters $[\sigma]$ (resp. $[\om]$, resp $[\alpha]$). As discussed in 
Section~\ref{sec:twists}, in general, there is no  obvious way to equip $\Twist(G)\cong H^2(G,\T)$ with a locally compact Hausdorff topology. However, the situation becomes quite nice in the presence of a representation group $Z\into H\onto G$ for $G$ as in Definition~\ref{defn-splitting}. In that case, the group $\Twist(G)$ is isomorphic to the dual group $\widehat{Z}$ via the transgression map $\tg:\widehat{Z}\to \Twist(G):\chi\mapsto [\sigma_\chi]$ as constructed in \eqref{eq-transgression}. Using Proposition~\ref{prop-groupext}, any representation group 
$Z\into H\onto G$ for $G$ provides a canonical twist $\Sigma_H=(\widehat{Z}\times\T\into \G_H\onto \widehat{Z}\times G)$
with fibres $\sigma_\chi=(\T\into G_{\chi}\onto G)$. Thus, as a consequence of the above, 
we can apply Theorem~\ref{thm-bundle-actions}
 to obtain a bundle of deformed algebras $\chi\mapsto A^{\sigma_\chi}_{\mu}$ over the base $\widehat{Z}\cong \Twist(G)$ with the continuity properties as spelled out in Theorem~\ref{thm-bundle-actions}. 
If $G$ is discrete and amenable, this bundle has been constructed before by Raeburn in \cite{Rae}.
Moreover, if $G$ is second countable and admits a representation group
$Z\into H\onto G$, then it follows from Corollary~\ref{cor-contmapH2} that every continuous map $\varphi:X\to H^2(G,\T)$ gives rise to a 
Twist $\Sigma_\varphi$ over $X\times G$ with fibres $[\sigma_\varphi(x)]$ and hence we obtain a bundle of deformed algebras 
$x\mapsto A^{\varphi(x)}_\mu$ with the continuity properties as in Theorem~\ref{thm-bundle-actions} above.

\section{K-theoretic results}\label{sec-K-theory}

It has been shown in \cite{BNS}*{Theorem 5.1}, using some ideas developed in \cites{ELPW, ENOO}, 
that for groups $G$ satisfying the Baum-Connes conjecture with coefficients, the 
$K$\nb-theory of the reduced  deformed algebras $A_r^\om$ only depends on the homotopy class of  $\om\in Z^2(G,\T)$.
In this section we want to discuss similar results for our deformation approach.
Let us first  recall some well-known results about the $K$\nb-theory of crossed products and the Baum-Connes conjecture. As usual when dealing with $KK$-theory, in this section we shall always assume that $G$ is {\em second countable} and all \cstar{}algebras are separable (with the obvious exceptions, like multiplier algebras).

If $\beta:G\car B$ is an action of a second countable group $G$ on a separable \cstar{}algebra $B$, then the 
{\em Baum-Connes conjecture with coefficients} 
asserts that 
a certain {\em assembly map}
\begin{equation}\label{eq-assembly}
\mu_B: K_*^{\top}(G;B)\to K_*(B\rtimes_rG)
\end{equation} 
is an isomorphism. We refer to \cite{BCH} for the construction of 
the assembly map. 
Although the conjecture turned out to be false in general (see \cite{HLS}), it has been shown to be true for large classes of groups. In 
particular, it has been shown for all a-$T$-menable groups in the sense of Gromov, which cover all amenable groups but also the free groups $\mathbb F_n$ as well as the groups $\SL(2,\R), \SU(n,1), \SO(n,1)$ and many others. In fact, it has been shown by Higson and Kasparov in \cite{HK} that all those groups satisfy the {\em strong Baum-Connes conjecture}.
In short, this means that there exists a proper $G$-space $X$ and an $X\rtimes G$-algebra $\mathcal A$ (in the {\em strong sense} of Kasparov that 
the structure map $\phi:C_0(X)\to \M(\mathcal A)$ takes image in the center $Z\M(\mathcal A)$) such that 
$\mathcal A$ is $KK^G$-equivalent to $\C$. It has been shown by Kasparov and Tu that the strong Baum-Connes conjecture implies the Baum-Connes conjecture with coefficients
(e.g., see \cite{Tu}*{Theorem 2.2}) and Tu shows that all groups satisfying the strong Baum-Connes conjecture are also $K$-amenable in 
the sense of Cuntz and Julg-Valette (see \cites{Cu, JV}). 

In what follows, by a homotopy between two
 actions $\beta_0,\beta_1:G\car B$ we understand 
a fibrewise action $\beta:G\car C([0,1], B)$ which  connects  $\beta_0$ and $\beta_1$ via evaluation 
at $0$ and $1$, respectively. The following result has been shown in \cite{ENOO} based on earlier work in \cite{CEOO} and \cite{MN}. Since we need a slight modification, we formulate

\begin{proposition}\label{prop-K-theory-crossed}
Suppose that $\beta:G\car C([0,1], B)$ is a homotopy of actions connecting  $\beta_0$ and $\beta_1$ as above. Assume further that $G$ satisfies the Baum-Connes conjecture with coefficients.
\begin{enumerate}
\item  Suppose  that for every compact subgroup 
$L$ of $G$ the canonical evaluation maps  $\ev^i_*:K_*(C([0,1],B)\rtimes_{\tilde\beta} L)\to K_*(B\rtimes_{\beta^i}L)$ 
are isomorphisms for $i=0,1$. Then the evaluation maps 
$\ev^i_*:K_*(C([0,1],B)\rtimes_{\tilde\beta,r} G)\to K_*(B\rtimes_{\beta^i,r}G)$ are isomorphisms as well.
In particular
$K_*(B\rtimes_{\beta^0,r}G)\cong K_*(B\rtimes_{\beta^1,r}G)$.
\item Suppose all assumptions of (1). If,  in addition, $G$ is $K$-amenable, then $K_*(B\rtimes_{\beta^0,\mu}G)\cong K_*(B\rtimes_{\beta^1,\mu}G)$ for every correspondence crossed-product functor $\rtimes_\mu$.
\item If $G$ satisfies the strong Baum-Connes conjecture and the evaluation maps 
$\ev^i:C([0,1],B)\rtimes_{\tilde\beta}L\to B\rtimes_{\beta^i}L$ in (1)  are  $KK$-equivalences, then 
the same holds for the evaluation maps 
$\ev^i:C([0,1],B)\rtimes_{\tilde\beta,\mu} G\to B\rtimes_{\beta^i,\mu}G$ for every correspondence crossed-product functor $\rtimes_\mu$. 
In particular, we then get  $B\rtimes_{\beta^0,\mu}G\sim_{KK} B\rtimes_{\beta^1,\mu}G$.
\end{enumerate}
\end{proposition}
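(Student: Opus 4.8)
The plan is to deduce all three statements from the Baum--Connes machinery applied to the homotopy $\beta\colon G\car C([0,1],B)$, using the naturality of the assembly map with respect to the equivariant evaluation maps $\ev^i\colon C([0,1],B)\to B$ at the endpoints. The key structural input is that the topological $K$-theory $K_*^{\top}(G;-)$ is assembled from contributions indexed by compact subgroups $L\le G$, so that an isomorphism on crossed products by each such $L$ propagates to an isomorphism on the left-hand side of the assembly map for $G$ itself.

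First I would treat item (1). The evaluation maps $\ev^i$ are $G$-equivariant $*$-homomorphisms $C([0,1],B)\to B$ intertwining $\tilde\beta$ and $\beta^i$, hence induce maps on both sides of the assembly map fitting into a commuting square
\begin{equation*}
\begin{CD}
K_*^{\top}(G;C([0,1],B)) @>{\mu}>> K_*(C([0,1],B)\rtimes_{\tilde\beta,r}G)\\
@V{\ev^i_*}VV @VV{\ev^i_*}V\\
K_*^{\top}(G;B) @>{\mu}>> K_*(B\rtimes_{\beta^i,r}G).
\end{CD}
\end{equation*}
Since $G$ satisfies Baum--Connes with coefficients, both horizontal maps are isomorphisms. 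By the construction of the topological side via the proper action of $G$ and induction from compact subgroups, the left vertical map $\ev^i_*$ on $K_*^{\top}(G;-)$ is an isomorphism whenever the corresponding maps $\ev^i_*\colon K_*(C([0,1],B)\rtimes_{\tilde\beta}L)\to K_*(B\rtimes_{\beta^i}L)$ are isomorphisms for every compact subgroup $L$ (for compact $L$ the crossed product is unambiguous since all crossed products by compact groups coincide). This is precisely our hypothesis, so $\ev^i_*$ on the topological side is an isomorphism, and the commuting square then forces the right vertical $\ev^i_*$ on $K_*(\cdot\rtimes_r G)$ to be an isomorphism. Running this for $i=0$ and $i=1$ and composing gives $K_*(B\rtimes_{\beta^0,r}G)\cong K_*(B\rtimes_{\beta^1,r}G)$.

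For item (2), I would invoke $K$-amenability of $G$: when $G$ is $K$-amenable, the quotient map $B\rtimes_{\beta,\max}G\onto B\rtimes_{\beta,r}G$ is a $KK$-equivalence, and more generally for any correspondence functor $\rtimes_\mu$ the canonical surjections sandwiched between max and reduced are $KK$-equivalences (since they all sit between two maps whose composite is a $KK$-equivalence). Thus $K_*(B\rtimes_{\beta^i,\mu}G)\cong K_*(B\rtimes_{\beta^i,r}G)$ naturally, and the isomorphism from (1) transports across. For item (3), under the \emph{strong} Baum--Connes conjecture the $\gamma$-element equals $1_G$, so the descent functor $j_G^\mu\colon KK^G\to KK$ sends the $KK^G$-equivalence $C([0,1],B)\sim_{KK^G} B$ (which the endpoint evaluation realizes on the level of compact subgroups, by hypothesis the $\ev^i$ are $KK$-equivalences after restricting to each $L$, and this globalizes to a $KK^G$-equivalence via the strong form) to a $KK$-equivalence of crossed products; here I would use that every correspondence functor factors through the reduced one up to $KK$-equivalence under $K$-amenability (a consequence of strong Baum--Connes).

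The main obstacle I anticipate is item (3): one must upgrade the endpoint-evaluation $KK$-equivalences over compact subgroups to a genuine $G$-equivariant $KK$-equivalence $C([0,1],B)\sim_{KK^G}B$, and then show this survives the passage to the exotic crossed product $\rtimes_\mu$. The first part relies on the strong Baum--Connes assumption to reduce $G$-equivariant $KK$ to the proper/compact setting (via the $KK^G$-equivalence of the relevant $\mathcal A$ with $\C$), while the second relies crucially on the functoriality and $KK$-compatibility of correspondence functors established in \cite{BEW}; verifying that the descent $j_G^\mu$ is well-defined and functorial on $KK^G$ for exotic $\mu$, and that it respects these equivalences, is the delicate technical point.
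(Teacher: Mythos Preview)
Your arguments for (1) and (2) are correct and essentially reproduce what the paper does by citing \cite{ENOO}*{Proposition~2.1(i)} and \cite{BEW}*{Theorem~6.6}: the commuting square with the assembly maps, together with the fact that $K_*^{\top}(G;-)$ is built from compact-subgroup data, gives (1), and $K$-amenability plus \cite{BEW}*{Theorem~6.6} gives the $KK$-equivalences between $\rtimes_{\max}$, $\rtimes_\mu$, and $\rtimes_r$ needed for (2). (Your parenthetical justification ``since they all sit between two maps whose composite is a $KK$-equivalence'' is not enough by itself---a two-out-of-three argument requires more---but \cite{BEW}*{Theorem~6.6} establishes each map is a $KK$-equivalence directly, so this is harmless.)

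For (3), however, there is a genuine gap in the route you propose. You assert that the hypothesis on compact subgroups ``globalizes to a $KK^G$-equivalence via the strong form'', i.e.\ that $\ev^i$ is invertible in $KK^G(C([0,1],B),B)$, and then apply descent $j_G^\mu$. But the strong Baum--Connes conjecture does \emph{not} say that $KK$-equivalences over all compact subgroups lift to a $KK^G$-equivalence; in general there is no such lifting. What strong BC (i.e.\ $\gamma=1_G$ via a proper $G$-algebra $\mathcal A$) gives is a factorization at the level of \emph{descended} $KK$-groups: for any $x\in KK^G(A_1,A_2)$ one has $j_G(x)=j_G(x\otimes_\C \gamma)$, and $x\otimes_\C 1_{\mathcal A}$ lives over a proper $G$-algebra, where the crossed product is Morita equivalent to a groupoid algebra controlled by compact isotropy. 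The conclusion is that $j_G^r(\ev^i)$---and then $j_G^\mu(\ev^i)$ via $K$-amenability and \cite{BEW}*{Theorem~6.6}---is a $KK$-equivalence, without ever producing a $KK^G$-inverse to $\ev^i$. This is exactly the content of \cite{ENOO}*{Proposition~2.1(iii)}, which is what the paper invokes. Your own final paragraph correctly identifies this step as the delicate one; the fix is to abandon the attempt to lift to $KK^G$ and instead run the $\gamma$-element factorization directly on the descended class, as in \cite{ENOO}.
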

\begin{proof} Item  (1) follows  from item (i) in \cite{ENOO}*{Proposition 2.1} applied to the evaluation maps $\ev_i:C([0,1],B)\to B$ at $i=0,1$. 

Item (2) then follows from  \cite{BEW}*{Theorem 6.6}, which states that for every $K$-amenable group $G$ and every 
action $\beta:G\car B$ the canonical  quotient maps
\begin{equation}\label{eq-KK}
B\rtimes_{\beta,\max}G\stackrel{q_{\max,\mu}}{\onto} B\rtimes_{\beta,\mu}G\stackrel{q_{\mu,r}}{\onto} B\rtimes_{\beta,r}G
\end{equation}
are $KK$-equivalences if $\rtimes_\mu$ is a correspondence crossed-product functor for $G$.

Item (3) follows from item (iii) in  \cite{ENOO}*{Proposition 2.1} combined  with \cite{BEW}*{Theorem 6.6}.
 \end{proof}
 
\begin{remark}\label{rem-torsionfree}
Note that the requirements on the compact subgroups $L$ of $G$ in (1)--(3) of the above proposition are always satisfied if the only compact subgroup of $G$ is the trivial group, e.g., if $G$ is a discrete torsion free group, or if $G$ is a simply connected solvable Lie group. In general, the requirement on the compact subgroups is not very easy to check.
\end{remark}
 
Suppose now that $\beta:G\car C([0,1],B)$ is a homotopy of actions as above. Suppose further that
$\varphi:C_0(G)\to\M(B)$ is a nondegenerate \Star{}homomorphism making all triples $(B,\beta_t, \phi)$
into weak $G\rtimes G$-algebras. We obtain a bundle of weak $G\rtimes G$-algebras 
$(C([0,1], B), \beta, \Phi)$ over $[0,1]$ with $\Phi=\id_{C[0,1]}\otimes \varphi:C_0([0,1]\times G)\to \M(C([0,1], B)$,
connecting the weak $G\rtimes G$-algebras $(B,\beta^0,\phi)$ and $(B,\beta^1,\phi)$ as in 
Definition~\ref{def-semicontbundle}. Then a combination of Proposition~\ref{prop-K-theory-crossed} 
with Theorem~\ref{thm-bundle1} gives

\begin{corollary}\label{cor-K-theory-crossed}
Let $(C([0,1], B), \beta, \Phi)$ be as above and, for any duality crossed-product functor $\rtimes_\mu$, 
let $(\A_\mu, [0,1], q_t^\A:\A_\mu\to A_\mu^t)$ denote the corresponding bundle of deformed algebras as 
in Theorem~\ref{thm-bundle1}.  Suppose  $G$  satisfies the Baum-Connes conjecture with coefficients. 
\begin{enumerate}
\item  If $\beta:G\car C([0,1],B)$ satisfies the assumptions of item (1) in Proposition~\ref{prop-K-theory-crossed}, then 
$K_*(A_r^0)\cong K_*(A_r^1)$.
\item If $\beta:G\car C([0,1],B)$ satisfies the assumptions of item (2) in Proposition~\ref{prop-K-theory-crossed}, then $K_*(A_\mu^0)\cong K_*(A_\mu^1)$ for every correspondence crossed-product functor $\rtimes_\mu$.
\item If $\beta:G\car C([0,1],B)$ satisfies the assumptions of item (3) in Proposition~\ref{prop-K-theory-crossed}, then $A_\mu^0$ is $KK$-equivalent to $A_\mu^1$ for every correspondence crossed-product functor $\rtimes_\mu$.
\end{enumerate}
\end{corollary}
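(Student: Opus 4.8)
The plan is to reduce each of the three statements to the corresponding statement about the crossed products $B\rtimes_{\beta^t,\mu}G$ furnished by Proposition \ref{prop-K-theory-crossed}, using that every deformed fibre $A^t_\mu$ is Morita equivalent to such a crossed product. Concretely, I would first record the fibrewise Morita equivalences. Applying Proposition \ref{prop-Landstad} to the weak $G\rtimes G$-algebra $(B,\beta^t,\phi)$ (the fibre of $(C([0,1],B),\beta,\Phi)$ at $t$), the module $\F_\mu(B)$ implements a $\widehat{G}$-equivariant Morita equivalence between $(A^t_\mu,\delta^t_\mu)$ and $(B\rtimes_{\beta^t,\mu}G,\widehat{\beta^t}_\mu)$; equivalently, this is the fibre at $t$ of the bundle Morita equivalence of Theorem \ref{thm-bundle1}(1),(3). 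Since $G$ is second countable and $B$ is separable, all the algebras involved are separable, so each of these Morita equivalences is a stable isomorphism and therefore induces both an isomorphism $K_*(A^t_\mu)\cong K_*(B\rtimes_{\beta^t,\mu}G)$ and a $KK$-equivalence $A^t_\mu\sim_{KK}B\rtimes_{\beta^t,\mu}G$.

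With these in hand the three items are immediate. For item (1), Proposition \ref{prop-K-theory-crossed}(1) gives $K_*(B\rtimes_{\beta^0,r}G)\cong K_*(B\rtimes_{\beta^1,r}G)$; composing with the two $K$-theory isomorphisms $K_*(A_r^i)\cong K_*(B\rtimes_{\beta^i,r}G)$ yields $K_*(A_r^0)\cong K_*(A_r^1)$. Item (2) is the same argument with $\rtimes_r$ replaced by an arbitrary correspondence functor $\rtimes_\mu$, now invoking Proposition \ref{prop-K-theory-crossed}(2). For item (3) I would chain $KK$-equivalences: Proposition \ref{prop-K-theory-crossed}(3) gives $B\rtimes_{\beta^0,\mu}G\sim_{KK}B\rtimes_{\beta^1,\mu}G$, so that $A_\mu^0\sim_{KK}B\rtimes_{\beta^0,\mu}G\sim_{KK}B\rtimes_{\beta^1,\mu}G\sim_{KK}A_\mu^1$.

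The argument is essentially bookkeeping once the earlier machinery is invoked, so there is no deep obstacle; the one point deserving care is the passage from Morita equivalence to $KK$-equivalence needed in item (3), which relies on separability (guaranteed here by second countability of $G$ together with separability of $B$). A cleaner and more structural alternative, which I would prefer if one wants the isomorphisms to be compatible with the whole homotopy, is to observe that the bundle Morita equivalence of Theorem \ref{thm-bundle1}(3) is $C([0,1])$-linear and respects the evaluation quotient maps. Hence it intertwines the evaluation maps $q^\A_t$ on $\A_\mu$ with the evaluation maps $\ev^t$ on $C([0,1],B)\rtimes_{\beta,\mu}G$, and Proposition \ref{prop-K-theory-crossed} may be applied directly on the crossed-product side of the bundle, transporting its conclusions verbatim to $(\A_\mu,[0,1],q^\A_t\colon\A_\mu\to A^t_\mu)$.
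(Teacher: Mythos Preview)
Your proposal is correct and follows exactly the route the paper intends: the corollary is stated there as an immediate combination of Proposition~\ref{prop-K-theory-crossed} with Theorem~\ref{thm-bundle1}, i.e., transport the crossed-product conclusions across the (bundle) Morita equivalence $\F_\mu(\B)$. One small remark: the detour through stable isomorphism is unnecessary---Morita equivalence of $C^*$-algebras always induces an isomorphism in $K$-theory, and for $\sigma$-unital algebras the imprimitivity bimodule itself furnishes an invertible $KK$-class, so separability is not really the crux of the passage in item~(3).
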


Recall that items (1) and (2) are always satisfied if $G$ has no non-trivial compact subgroups. The corollary then covers our general deformation procedure as described in Section~\ref{sec:Landstad}.
For deformations by twists $\sigma=(\T\into G_\sigma\onto G)$ with corresponding actions $\alpha:G\car \K(\H)$ 
we obtain the  following more satisfying result:

\begin{corollary}\label{cor-Ktheory-actionsK}
Suppose   that $G$ is a second countable group which 
satisfies the Baum-Connes conjecture with coefficients and let $(A,\delta)$ be a separable $\mu$-coaction for some 
correspondence crossed-product functor $\rtimes_\mu$. Let 
$(B,\beta,\phi):=(A\rtimes_\delta \hatG, \widehat{\delta}, j_{C_0(G)})$ be the corresponding weak $G\rtimes G$-algebra and let $\Sigma=([0,1]\times\T\into \G\onto [0,1]\times G)$ be a twist over $[0,1]\times G$.
Then the following are true
\begin{enumerate}
    \item  If $\rtimes_\mu=\rtimes_r$ and if $(\A_r^\Sigma, [0,1], q_t^\Sigma:\A_r^\Sigma\to A_r^{\sigma_t})$ is the corresponding bundle as in Theorem~\ref{thm-bundle-actions}, then $q^\A_{t,*}:K_*(\A_r^{\Sigma})\to K_*(A_{r}^{\sigma_t})$
    is an isomorphism for all $t\in [0,1]$. In particular, we have
    $K_*(A_r^{\sigma_0})\cong K_*(A_r^{\sigma_1})$.
    \item If, in addition, $G$ is $K$-amenable, then (1) holds with $\rtimes_r$ replaced by any correspondence crossed-product functor $\rtimes_\mu$.
    \item If $G$ satisfies the strong Baum-Connes conjecture, then all quotient maps
    $q_t^\A: \A_\mu^{\Sigma}\to A_\mu^{\sigma_t}$ are $KK$-equivalences. In particular $A_\mu^{\sigma_0}\sim_{KK} A_\mu^{\sigma_1}$.
\end{enumerate}
\end{corollary}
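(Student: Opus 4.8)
The plan is to reduce the statement, via the Morita equivalences of Theorem~\ref{thm-bundle-actions}, to the purely crossed-product comparison results of Proposition~\ref{prop-K-theory-crossed}. Set $B':=B\otimes\K(\H)$ and regard $\beta\otimes\alpha\colon G\car C([0,1],B')$ as a homotopy of actions on $B'$ connecting $\beta\otimes\alpha_0$ and $\beta\otimes\alpha_1$. By Theorem~\ref{thm-bundle-actions}(4) the bundle $(\A_\mu^\alpha,[0,1],q_t^\alpha)$ is Morita equivalent --- through a $C_0([0,1])$-linear equivalence bimodule that factors fibrewise and is compatible with the quotient maps --- to the crossed-product bundle attached to the $C_0([0,1])$-linear action $\beta\otimes\alpha$, whose fibre over $t$ is $B'\rtimes_{\beta\otimes\alpha_t,\mu}G$. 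Because Morita equivalence preserves $K$-theory and $KK$-classes, and because this equivalence intertwines $q_t^\alpha$ with the evaluation map $\ev^t$ of the crossed-product bundle, the assertion that $q^\alpha_{t,*}$ is an isomorphism (resp.\ that $q_t^\alpha$ is a $KK$-equivalence) is equivalent to the same assertion for $\ev^t$. Thus it suffices to treat the homotopy $\beta\otimes\alpha$ on $B'$.

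With this reduction, items~(1)--(3) follow by feeding $\beta\otimes\alpha$ into Proposition~\ref{prop-K-theory-crossed} (equivalently Corollary~\ref{cor-K-theory-crossed}): the reduced case together with Baum-Connes gives~(1), adding $K$-amenability gives~(2), and the strong Baum-Connes conjecture gives~(3). The only hypotheses of Proposition~\ref{prop-K-theory-crossed} not already granted are the conditions on the compact subgroups $L\le G$, namely that the evaluation maps
$$\ev^i\colon C([0,1],B')\rtimes_{\widetilde{\beta\otimes\alpha}}L\to B'\rtimes_{(\beta\otimes\alpha)^i}L$$
be $K$-theory isomorphisms (for~(1),(2)) or $KK$-equivalences (for~(3)). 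Verifying these is the heart of the proof, and it is where the specific shape of a deformation by actions on $\K(\H)$ is used. Since a $KK$-equivalence of the $\ev^i$ implies the weaker $K$-theory statements, I would aim to prove the $KK$-equivalence uniformly, so that all three items are covered at once.

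So fix a compact subgroup $L\le G$ and note that here only the $\K(\H)$-factor is deformed, the fixed action $\beta|_L$ taking no part in the twisting. Choosing (by the Gleason-section arguments already used in Section~\ref{sec:twists}, cf.\ Lemma~\ref{lem-groupbundle}) a family of $\sigma_t$-projective unitaries $V_t\colon L\to\U(\H)$ with $\alpha_t|_L=\Ad V_t$, the unitaries $(1\otimes V_t)^{*}u$ untwist the inner part of the action and yield, compatibly in $t$, isomorphisms
$$B'\rtimes_{\beta\otimes\alpha_t}L\;\cong\;\bigl(B\rtimes_{\beta,\bar\sigma_t}L\bigr)\otimes\K(\H).$$
Hence the fibres differ only through the scalar cocycles $\sigma_t$, whose classes $[\sigma_t]$ record the $L$-equivariant Morita classes $[\alpha_t|_L]\in\Br(L)$. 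The decisive input is that for a compact second countable group $L$ the Brauer group $\Br(L)\cong H^2(L,\T)$ is discrete (for instance $H^2(\T^n,\T)\cong\Z^{\binom n2}$, and for finite $L$ it is finite); therefore the continuous assignment $t\mapsto[\alpha_t|_L]$ is constant on the connected interval $[0,1]$. Consequently the actions $\alpha_t|_L$ are mutually $L$-equivariantly Morita equivalent, and over the contractible base $[0,1]$ these equivalences may be assembled into a continuous field of $L$-equivariant bimodules trivialising the family. This exhibits the crossed-product field $t\mapsto B'\rtimes_{\beta\otimes\alpha_t}L$ as Morita equivalent to the constant field $C([0,1],B'\rtimes_{\beta\otimes\alpha_0}L)$, whose evaluation maps are homotopy equivalences, hence $KK$-equivalences. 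The compact-subgroup hypotheses of Proposition~\ref{prop-K-theory-crossed} are thereby met, and the corollary follows.

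I expect the main obstacle to be exactly this last continuity step: upgrading the pointwise $L$-equivariant Morita equivalences --- which exist because the Brauer class is constant --- to a genuine continuous field of equivalences over $[0,1]$, so that the crossed-product field is trivialised up to Morita equivalence rather than merely fibrewise. The discreteness of $\Br(L)$ and the contractibility of $[0,1]$ are what make this possible, while the scalar-cocycle model $\bigl(B\rtimes_{\beta,\bar\sigma_t}L\bigr)\otimes\K(\H)$ obtained from the untwisting keeps the required bookkeeping manageable.
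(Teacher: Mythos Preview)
Your overall strategy matches the paper's proof exactly: reduce via the $C_0([0,1])$-linear Morita equivalence of Theorem~\ref{thm-bundle-actions} to the crossed-product bundle for $\beta\otimes\alpha$ on $B\otimes C([0,1],\K(\H))$, then invoke Proposition~\ref{prop-K-theory-crossed}. The only substantive step left is verifying the compact-subgroup hypothesis, and this is where your argument and the paper's diverge.

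The paper's route is shorter and sidesteps precisely the gap you flag. Rather than untwisting fibrewise and then trying to assemble the resulting Morita equivalences into a continuous field, the paper cites directly that for any compact subgroup $L\le G$ the restriction $\alpha|_L\colon L\car C([0,1],\K(\H))$ is \emph{exterior equivalent}, as a $C([0,1])$-linear action, to the constant action $\id_{C[0,1]}\otimes\alpha_0|_L$; this is shown in the proof of \cite{ELPW}*{Theorem~1.7}. Exterior equivalence of the \emph{global} action immediately gives an isomorphism
\[
\big(B\otimes C([0,1],\K(\H))\big)\rtimes_{\beta\otimes\alpha}L\;\cong\;C\big([0,1],(B\otimes\K(\H))\rtimes_{\beta\otimes\alpha_0}L\big),
\]
so all evaluation maps are homotopy equivalences, hence $KK$-equivalences, and Proposition~\ref{prop-K-theory-crossed} applies directly.

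Your argument via discreteness of $\Br(L)\cong H^2(L,\T)$ is morally the reason \emph{why} the ELPW result holds, but as written it stops one step short: constancy of $t\mapsto[\alpha_t|_L]$ only gives pointwise exterior equivalence of the fibre actions, and you correctly identify that upgrading this to a continuous trivialization over $[0,1]$ is the real work. That upgrade is exactly the content of the ELPW argument (one lifts a continuous path in $\Aut(\K(\H))\cong\PU(\H)$ to $\U(\H)$ using local sections and the contractibility of $[0,1]$, producing a continuous $1$-cocycle implementing the exterior equivalence). So your sketch is not wrong, but it leaves as an ``expected obstacle'' the very step the paper dispatches by citation. Replacing your Brauer-group paragraph with the exterior-equivalence statement from \cite{ELPW} closes the gap cleanly and with less bookkeeping than the $(B\rtimes_{\beta,\bar\sigma_t}L)\otimes\K(\H)$ model you set up.
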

\begin{proof} Let $\alpha:G\car \K(L^2(\G,\iota))$ be the corresponding action. Since everything in sight is separable, 
we may assume (up to $X\rtimes G$-equivariant Morita equivalence) that $\K(L^2(\G,\iota))=C_0(X,\K(\H))$.
It follows from Proposition~\ref{prop-stabilzation}
that the bundle $(\A_\mu^\Sigma, [0,1], q_t^\Sigma:\A_\mu^\Sigma\to A_\mu^{\sigma_t})$ is Morita equivalent to the crossed-product bundle over $[0,1]$ with fibre maps
$$q_t\rtimes_\mu G: \big(B\otimes C([0,1], \K(\H))\big)\rtimes_{\beta\otimes \alpha,\mu} G\to (B\otimes \K(\H))\rtimes_{\beta\otimes \alpha_t,\mu}G.$$
Thus we only need to check the corresponding results for the crossed product bundle, for which we can apply 
Proposition~\ref{prop-K-theory-crossed} (with $B\otimes \K(\H)$ instead of $B$). 
For this let $L$ be any compact subgroup of $G$. In  the proof of \cite{ELPW}*{Theorem 1.7} it is shown that 
the restriction $\alpha|_L:L\car C([0,1],\K(\H))$ is exterior equivalent to the action 
$\id_{C[0,1]}\otimes \alpha_0|_L$, from which it follows that $C([0,1],\K(\H))\rtimes L\cong C([0,1], \K(\H)\rtimes_{\alpha_0}L)$.
But then we also get 
$$\big(B\otimes C([0,1], \K(\H))\big)\rtimes_{\beta\otimes \alpha}L\cong C\big([0,1], (B\otimes\K(\H))\rtimes_{\beta\otimes \alpha_0} L\big).$$
Hence all evaluation maps are $KK$-equivalences, and the results follow from 
Proposition~\ref{prop-K-theory-crossed}.
\end{proof}

Note that in the above corollary we can always start with a $C([0,1])$-linear action $\alpha:G\car C([0,1],\K(\H))$ and then 
use the corresponding twist $\Sigma_\alpha$ as in Lemma~\ref{lem-groupbundle}.

\begin{example}\label{ex-SL2}
For the first example we let $G=\PSL_2(\R)$, the quotient of $\SL_2(\R)$ by its center $\{\pm I_2\}$.
It follows from \cite{BEW2}*{Example 4.11} that there exist infinitely many distinct correspondence crossed-product functors for $G$. 

As mentioned in Example~\ref{ex-semi-simple} the universal covering group $H$ of $G$ provides  the unique representation group
$$\Z\into H\onto G$$
for $G$.
By Theorem~\ref{thm-bundle-actions}, for any 
$\mu$-coaction $(A,\delta)$ of $G$ and for any fixed correspondence crossed-product functor $\rtimes_\mu$ we obtain a  field 
of deformed $\mu$-coactions $(A^{\om}_\mu,\delta^{\om}_\mu)$ over 
$\widehat{\Z}\cong \T$. Since $G=\PSL(2,\R)$
has the Haagerup property (e.g., see 
\cite{CCJJV}), and since all cocycles are homotopic, it follows from 
Corollary~\ref{cor-Ktheory-actionsK} that 
all deformed algebras $A^{\om}_\mu$ are 
$KK$-equivalent to $A$, and they are also all 
$KK$-equivalent to $A_{\max}^{\omega}$ and $A_r^{\omega}$. Note that the group $G$ in this example is $K$-amenable (which explains that 
$A_{\max}^{\omega}$ and $A_r^{\omega}$ are $KK$-equivalent), but $G$ is not amenable.

In particular, if we start with $A=C_\mu^*(G):=\C\rtimes_\mu G$ with 
dual coaction $\delta$, then $A_\mu^\om=C_\mu^*(G,\om)$ is the $\mu$-twisted group algebra of $G$ by Proposition~\ref{pro:dual-coactions-deformation} above. 
Looking at the cocycle $\om_{-1}$ corresponding to $-1\in \T\cong \widehat{\Z}$ 
(which is the cocycle related to the central extension 
$1\to \{\pm I_2\}\to \SL_2(\R)\to \PSL_2(\R)\to 1$), we get the following curiosity: we observed in 
\cite{BEW:amenable}*{Example 5.26} that the maximal and reduced 
twisted group algebras $C^*_\max(G,\om_{-1})$ and $C^*_r(G,\om_{-1})$ coincide. 
By our results, $C^*_{\max}(G)$ deforms into $C^*_\max(G,\om_{-1})$ via a continuous path of cocycles
and $C^*_r(G)$ deforms into $C^*_r(G,\om_{-1})=C^*_{\max}(G, \om_{-1})$. But 
$C^*_{\max}(G)$ and $C^*_r(G)$ are not isomorphic! 
\end{example}

\begin{example}\label{ex-Kamenable}
To see other examples of groups for which the results of this paper give new information consider the 
semi-direct product $G:=\R^2\rtimes \SL_2(\R)$. 
Since $\SL_2(\R)$ has the Haagerup property
it satisfies the strong Baum-Connes conjecture by \cite{HK}. 
It follows then from \cite{JV}*{Proposition 3.3} that 
$G$ is $K$-amenable and 
\cite{CE:twistedKK}*{Theorem 7.1} implies that $G$ satisfies the Baum-Connes conjecture with coefficients.
It  follows  from  \cite{BEW2}*{Example 4.11} (via iterated crossed products) that there are infinitely many distinct correspondence crossed-product functors for $G$.

Now let $\theta\in \R$ be any real parameter. Let $\om_\theta\in Z^2(\R^2,\T)$ 
be  defined by 
$$\om_\theta\left(\left(\begin{smallmatrix}x_1\\ x_2\end{smallmatrix}\right),\left(\begin{smallmatrix}y_1\\ y_2\end{smallmatrix}\right)\right)=e^{i\pi(x_2y_1-x_1y_2)}.$$
It follows then from \cite{ELPW}*{Lemma 2.1} that there are $2$-cocycles $\tilde\om_\theta\in Z^2(G,\T)$ defined by 
$$\tilde\om_\theta\left(\big(\left(\begin{smallmatrix}x_1\\ x_2\end{smallmatrix}\right), g\big),
\big(\left(\begin{smallmatrix}y_1\\ y_2\end{smallmatrix}\right), h\big)\right)
=\om_\theta\left(\left(\begin{smallmatrix}x_1\\ x_2\end{smallmatrix}\right),g\cdot \left(\begin{smallmatrix}y_1\\ y_2\end{smallmatrix}\right)\right).$$
Since all these cocycles are homotopic to the trivial cocycle (use Remark~\ref{rem-cont}), it follows that for any given coaction 
$(A,\delta)$ of $G$ 
and for any given correspondence crossed-product functor $\rtimes_\mu$ for $G$,
the $K$-theory groups $K_*(A_\mu^{\tilde\om_\theta})$ all agree, and they agree with $K_*(A)$ if 
$\delta$ is a $\nu$-coaction for some correspondence crossed-product functor $\rtimes_\nu$ (e.g., if $\delta$ is maximal or normal). 

Restricting the cocycles $\tilde\om_\theta$ to $H=\Z^2\rtimes\SL(2,\Z)$ provides a similar example for the discrete group $H$.
\end{example}

In \cite{ENOO} the authors introduced some stronger $K$-theoretic properties for $C_0(X)$-algebras $\A$ (which in our notion 
just means upper semicontinuous  bundles of \cstar{}algebras
$(\A, X, q_x:\A\to A_x)$ over $X$), which are potentially useful to do $K$-theoretic computations of the section algebra $\A$
(e.g., see \cite{ENOO}*{Section 4}, where the authors study a $K$-theoretic analogue of the Leray-Serre spectral sequence for such bundles).
The relevant notions for us are the notions of $K$- and $KK$-fibrations, as defined in \cite{ENOO}*{Definition 1.3} (where we use 
$K$-theory in item (i) of that definition).
We leave it to the interested reader to check out \cite{ENOO}*{Definition 1.3} and to reformulate it to the more general notion of bundles used here. Note that the advantage of allowing more general 
notions lies in the fact that we do not need to assume exactness of our crossed-product functors $\rtimes_\mu$ in our statements, which guarantees the relevant crossed-product bundles to be semicontinuous by Proposition~\ref{prop-crossed-product-bundle}.

\begin{proposition}\label{prop-KandKK}
Let $\Sigma=(X\times\T\into \G\onto X\times G)$ be a twist over $X\times G$ with $X\times G$ second countable, and let $(B,\beta,\varphi)$ be a separable weak $G\rtimes G$-algebra. For any correspondence crossed-product functor 
$\rtimes_\mu$ let $(\A_\mu^{\Sigma}, X, q_x^{\Sigma}:\A_\mu^\Sigma\to A_\mu^{\sigma_x})$ be the corresponding 
bundle of `deformed' algebras. Then the following hold:
\begin{enumerate}
    \item If $G$ satisfies the Baum-Connes conjecture with coefficients, then 
    $(\A_r^{\Sigma}, X, q_x^{\Sigma}:\A_r^\Sigma\to A_r^{\sigma_x})$ is a $K$-fibration.
    \item If $G$ satisfies the strong Baum-Connes conjecture, then 
    $(\A_\mu^{\Sigma}, X, q_x^{\Sigma}:\A_\mu^\Sigma\to A_\mu^{\sigma_x})$ is a $KK$-fibration. 
\end{enumerate}
\end{proposition}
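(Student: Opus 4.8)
The plan is to reduce the two assertions to the corresponding statements for a bundle of crossed products, in the same spirit as the proof of Corollary~\ref{cor-Ktheory-actionsK}, and then to feed these into the crossed-product results of \cite{ENOO}*{Proposition~2.1}. First I would invoke item~(4) of Theorem~\ref{thm-bundle-actions}, which exhibits the deformed bundle $(\A_\mu^\alpha,X,q_x^\alpha\colon\A_\mu^\alpha\to A_\mu^{\alpha_x})$ as $C_0(X)$-linearly Morita equivalent to the bundle of $\mu$-crossed products arising from the $C_0(X)$-linear action $\beta\otimes\alpha$ on $B\otimes C_0(X,\K(\H))$ as in Example~\ref{ex-crossed-bundle}, with fibre maps
$$q_x\rtimes_\mu G\colon\big(B\otimes C_0(X,\K(\H))\big)\rtimes_{\beta\otimes\alpha,\mu}G\to (B\otimes\K(\H))\rtimes_{\beta\otimes\alpha_x,\mu}G.$$
Since a $C_0(X)$-linear Morita equivalence of bundles restricts, over every locally closed subset and in particular at every fibre, to a Morita equivalence of the corresponding subquotients, it induces invertible $KK$-classes that are natural with respect to all the restriction maps (this is the $KK$-refinement of the Rieffel-correspondence argument already used in Lemma~\ref{lem-Morita-bundles}). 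Consequently both the property of being a $K$-fibration and that of being a $KK$-fibration are invariant under such equivalences, and it suffices to prove the two assertions for the crossed-product bundle.

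Next I would verify the hypotheses of \cite{ENOO}*{Proposition~2.1} concerning the restriction of the input bundle $C_0(X,B\otimes\K(\H))$, with action $\beta\otimes\alpha$, to the compact subgroups $L$ of $G$. Here I would reuse the argument from the proof of \cite{ELPW}*{Theorem~1.7}: for each compact $L\le G$ the restricted action $\alpha|_L$ on $C_0(X,\K(\H))$ is (locally) exterior equivalent to a $C_0(X)$-linear action which is fibrewise pulled back from a fixed action on $\K(\H)$, so that the bundle of crossed products
$$\big(B\otimes C_0(X,\K(\H))\big)\rtimes_{\beta\otimes\alpha}L\cong C_0\big(X,(B\otimes\K(\H))\rtimes_{\beta\otimes\alpha_{x_0}}L\big)$$
is locally trivial. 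A locally trivial bundle with constant fibre has restriction maps that are $KK$-equivalences, so it is in particular a $KK$-fibration (and a fortiori a $K$-fibration); this is exactly the input condition required by \cite{ENOO}*{Proposition~2.1}.

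With this in hand, item~(1) follows from item~(i) of \cite{ENOO}*{Proposition~2.1}, applied to the reduced crossed product and using that $G$ satisfies the Baum--Connes conjecture with coefficients. For item~(2), item~(iii) of \cite{ENOO}*{Proposition~2.1} together with the strong Baum--Connes assumption yields the $KK$-fibration property of the reduced crossed-product bundle; since strong Baum--Connes implies that $G$ is $K$-amenable, \cite{BEW}*{Theorem~6.6} shows that for any correspondence functor $\rtimes_\mu$ the canonical quotient maps $\B\rtimes_{\max}G\onto\B\rtimes_\mu G\onto\B\rtimes_r G$ are fibrewise (and over every subquotient) $KK$-equivalences, and these maps are $C_0(X)$-linear. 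Hence the $\mu$-bundle is $KK$-equivalent to the reduced one in a way compatible with restriction, which transfers the $KK$-fibration property to the $\mu$-bundle and, back across the Morita equivalence of the first step, to $(\A_\mu^\alpha,X,q_x^\alpha)$.

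The main obstacle I expect is the compact-subgroup verification: extending the \cite{ELPW} exterior-equivalence argument from the contractible homotopy base $X=[0,1]$ to a general locally compact base $X$, where global triviality of the underlying $\PU(\H)$-bundle can fail, so one must argue locally and check that local triviality already forces the $K$- and $KK$-fibration conditions in the present, slightly more general bundle sense. A secondary, more bookkeeping obstacle is to confirm that \cite{ENOO}*{Proposition~2.1} and the Morita-invariance of the fibration notions hold verbatim for our (possibly non-faithful) notion of bundle of \cstar{}algebras, at the level of all subquotients and not merely at the fibres.
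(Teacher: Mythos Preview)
Your overall strategy matches the paper's: reduce to the crossed-product bundle via the $C_0(X)$-linear Morita equivalence of Theorem~\ref{thm-bundle-actions}, then invoke the \cite{ENOO} machinery together with the \cite{ELPW} exterior-equivalence trick for compact subgroups, and finally use $K$-amenability and \cite{BEW}*{Theorem~6.6} to pass from reduced to general $\rtimes_\mu$. So the architecture is right.

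However, the ``main obstacle'' you flag is not an obstacle at all, and worrying about it leads you to formulate the compact-subgroup step at the wrong level of generality. The notion of $K$- or $KK$-fibration from \cite{ENOO}*{Definition~1.3} is \emph{defined} in terms of pullbacks along maps $\sigma\colon\Delta^p\to X$ from standard simplices: one must show that for every such $\sigma$ the evaluation maps of the pulled-back bundle over $\Delta^p$ are $K$- (resp.\ $KK$-) equivalences. Since $\Delta^p$ is contractible, the \cite{ELPW} argument applies verbatim to the pulled-back action $\sigma^*\alpha|_L$ on $C(\Delta^p,\K(\H))$, giving a global exterior equivalence to $\id_{C(\Delta^p)}\otimes\alpha_v$ for any chosen $v\in\Delta^p$. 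There is no need to establish local triviality over the original base $X$, nor to argue that local triviality suffices. This is exactly how the paper proceeds: it works over $\Delta^p$ from the outset, citing \cite{ENOO}*{Corollary~2.4} as the model.

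A related bookkeeping point: the result in \cite{ENOO} that transfers the fibration property from $L$-crossed products to $G$-crossed products is Proposition~2.2, not Proposition~2.1 (the latter deals with individual evaluation maps, as used in Proposition~\ref{prop-K-theory-crossed}). Once you work over simplices and cite Proposition~2.2, your secondary concern about extending the \cite{ENOO} results to the possibly non-faithful bundle notion also largely evaporates, since over a simplex the relevant bundles are upper semicontinuous anyway.
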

\begin{proof}
As in proof of Corollary~\ref{cor-Ktheory-actionsK} we obtain a corresponding action 
$\alpha:G\car C_0(X,\K(\H))$ such that $\Sigma\cong \Sigma_\alpha$. 
By  Proposition~\ref{prop-stabilzation}, the bundle $(\A_\mu^{\Sigma}, X, q_x^{\Sigma}:\A_\mu^\Sigma\to A_\mu^{\sigma_x})$
is $C_0(X)$-linearly Morita equivalent to the $\mu$-crossed-product bundle provided by the fibrewise action 
$\beta\otimes \alpha:G\car B\otimes C_0(X,\K(\H))\cong C_0(X, B\otimes \K(\H))$.
Since Morita equivalences are in particular $KK$-equivalences,
it suffices to show that under the given assumptions, the latter is a $K$- (resp. $KK$-)fibration.

The proof is analogous to the proof of \cite{ENOO}*{Corollary 2.4} (and also to the proof of Corollary~\ref{cor-Ktheory-actionsK} above), using that for any standard $p$-simplex 
$\Delta^p$ and any fibrewise action $\alpha:G\car C(\Delta^p,\K(\H))$ the restriction of $\alpha$ to 
any compact subgroup $L$ is exterior equivalent to an action of the form $\id_{C(\Delta^p)}\otimes \alpha_v$
for some fixed $v\in \Delta^p$. As in the proof of Corollary~\ref{cor-Ktheory-actionsK}, this easily implies 
that the quotient map $q_v: \big(B\otimes C(\Delta^p,\K(\H))\big)\rtimes_{\beta\otimes \alpha} L\to 
(B\otimes \K(\H))\rtimes_{\beta\otimes \alpha_v}L$ is a $KK$-equivalence. The result then follows from 
\cite{ENOO}*{Proposition 2.2} and the fact that 
the conditions in (2) imply that $G$ is $K$-amenable.
\end{proof}

For example, the field of deformations of Example~\ref{ex-Kamenable} is always a $K$-fibration, while the field studied in Example~\ref{ex-SL2} is a $KK$-fibration. The 
above result also applies to the continuous fields of deformed algebras over
$X=H^2(G,\T)$ (or any pullback along a continuous map $\varphi:X\to H^2(G,\T)$) 
as in Corollary~\ref{cor-contmapH2} if the group $G$ is smooth and satisfies the (strong) Baum-Connes conjecture. 



\begin{bibdiv}
  \begin{biblist}
\bib{Abadie-Exel:Deformation}{article}{
  author={Abadie, Beatriz},
  author ={Exel, Ruy},
  title={Deformation quantization via {F}ell bundles},
  journal={Math. Scand.},
  volume={89},
  year={2001},
  number={1},
  pages={135--160},
  issn={0025-5521},
  doi={10.7146/math.scand.a-14335},
}

\bib{Abadie-Buss-Ferraro:Morita_Fell}{article}{
  author={Abadie, Fernando},
  author={Buss, Alcides},
  author={Ferraro, Dami\'an},
  title={Morita enveloping Fell bundles},
  journal={Bull. Braz. Math. Soc. (N.S.)},
  volume={50},
  date={2019},
  number={1},
  pages={3--35},
  issn={1678-7544},
  doi={10.1007/s00574-018-0088-6},
}

\bib{ABF-Amenability}{article}{
  author={Abadie, Fernando},
  author={Buss, Alcides},
  author={Ferraro, Dami\'{a}n},
  title={Amenability and approximation properties for partial actions and Fell bundles},
  journal={Bull. Braz. Math. Soc. (N.S.)},
  volume={53},
  date={2022},
  number={1},
  pages={173--227},
  issn={1678-7544},
  doi={10.1007/s00574-021-00255-8},
}

\bib{BCr}{article}{
   author={Baaj, Saad},
   author={Crespo, Jonathan},
   title={\'{E}quivalence mono\"{\i}dale de groupes quantiques et $K$-th\'{e}orie bivariate},
   language={French, with English and French summaries},
   journal={Bull. Soc. Math. France},
   volume={145},
   date={2017},
   number={4},
   pages={711--802},
   issn={0037-9484},
   doi={10.24033/bsmf.2751},
}

\bib{BGW}{article}{
  author={Baum, Paul},
  author={Guentner, Erik},
  author={Willett, Rufus},
     TITLE = {Expanders, exact crossed products, and the {B}aum-{C}onnes
              conjecture},
   JOURNAL = {Ann. $K$-theory},
    VOLUME = {1},
      YEAR = {2016},
    NUMBER = {2},
     PAGES = {155--208},
      ISSN = {2379-1683,2379-1691},
       DOI = {10.2140/akt.2016.1.155},
       URL = {https://doi.org/10.2140/akt.2016.1.155},
}

\bib{BCH}{incollection}{
  author={Baum, Paul},
  author={Connes, Alain},
  author={Higson, Nigel},
  title={Classifying space for proper actions and {$K$}-theory of group {$C^\ast $}-algebras},
  booktitle={{$C^\ast $}-algebras: 1943--1993 ({S}an {A}ntonio, {TX}, 1993)},
  series={Contemp. Math.},
  volume={167},
  pages={240--291},
  publisher={Amer. Math. Soc., Providence, RI},
  year={1994},
  doi={10.1090/conm/167/1292018},
}

\bib{BNS}{article}{
  author={Bhowmick, Jyotishman},
  author={Neshveyev, Sergey},
  author={Sangha, Amandip},
  title={Deformation of operator algebras by Borel cocycles},
  journal={J. Functional Analysis},
  volume={265},
  year={2013},
  pages={983--1001},
  doi={10.1016/j.jfa.2013.05.021},
}

\bib{Buss-Echterhoff:Exotic_GFPA}{article}{
  author={Buss, Alcides},
  author={Echterhoff, Siegfried},
  title={Universal and exotic generalized fixed-point algebras for weakly proper actions and duality},
  journal={Indiana Univ. Math. J.},
  volume={63},
  year={2014},
  number={6},
  pages={1659--1701},
  issn={0022-2518},
  doi={10.1512/iumj.2014.63.5405},
}

\bib{Buss-Echterhoff:Imprimitivity}{article}{
  author={Buss, Alcides},
  author={Echterhoff, Siegfried},
  title={Imprimitivity theorems for weakly proper actions of locally compact groups},
  journal={Ergodic Theory Dynam. Systems},
  volume={35},
  year={2015},
  number={8},
  pages={2412--2457},
  issn={0143-3857},
  doi={10.1017/etds.2014.36},
}

\bib{Buss-Echterhoff:Maximality}{article}{
  author={Buss, Alcides},
  author={Echterhoff, Siegfried},
  title={Maximality of dual coactions on sectional \cstar{}algebras of Fell bundles and applications},
  journal={Studia Math.},
  volume={229},
  date={2015},
  number={3},
  pages={233--262},
  issn={0039-3223},
  doi={10.4064/sm8361-1-2016},
}

\bib{BE:Fellbundles}{unpublished}{
  author={Buss, Alcides},
  author={Echterhoff, Siegfried},
  title={Deformation of Fell bundles},
  status={preprint},
  note={\arxiv{2402.04369}},
  date={2024},
}

\bib{BEW}{article}{
  author={Buss, Alcides},
  author={Echterhoff, Siegfried},
  author={Willett, Rufus},
  title={Exotic crossed products and the {B}aum-{C}onnes conjecture},
  journal={J. Reine Angew. Math.},
  volume={740},
  year={2018},
  pages={111--159},
  issn={0075-4102},
  doi={10.1515/crelle-2015-0061},
}

\bib{BEW2}{book}{
  author={Buss, Alcides},
  author={Echterhoff, Siegfried},
  author={Willett, Rufus},
  title={Exotic crossed products, in: Operator algebras and applications---the {A}bel {S}ymposium 2015},
  series={Abel Symp.},
  volume={12},
  pages={67--114},
  publisher={Springer, [Cham]},
  year={2017},
}

\bib{BEW:amenable}{unpublished}{
  author={Buss, Alcides},
  author={Echterhoff, Siegfried},
  author={Willett, Rufus},
  title={Amenability and weak containment for actions of locally compact groups on \cstar{}algebras},
  note={arXiv:2003.03469. To appear in Memoirs of the AMS},
}

\bib{CE:twistedKK}{article}{
  author={Chabert, J\'{e}r\^{o}me},
  author={Echterhoff, Siegfried},
  title={Twisted equivariant {$KK$}-theory and the {B}aum-{C}onnes conjecture for group extensions},
  journal={$K$-Theory},
  volume={23},
  year={2001},
  number={2},
  pages={157--200},
  issn={0920-3036},
  doi={10.1023/A:1017916521415},
}
\bib{CEOO}{article}{
    AUTHOR = {Chabert, J\'{e}r\^{o}me},
Author = {Echterhoff, Siegfried},
Author = {Oyono-Oyono, Herv{\'e}},
     TITLE = {Going-down functors, the {K}\"{u}nneth formula, and the
              {B}aum-{C}onnes conjecture},
   JOURNAL = {Geom. Funct. Anal.},
    VOLUME = {14},
      YEAR = {2004},
    NUMBER = {3},
     PAGES = {491--528},
      ISSN = {1016-443X,1420-8970},
       DOI = {10.1007/s00039-004-0467-6},
       URL = {https://doi.org/10.1007/s00039-004-0467-6},
}

\bib{CCJJV}{book}{
  author={Cherix, Pierre-Alain},
  author={Cowling, Michael},
  author={Jolissaint, Paul},
  author={Julg, Pierre},
  author={Valette, Alain},
  title={Groups with the {H}aagerup property},
  series={Modern Birkh\"{a}user Classics},
  publisher={Birkh\"{a}user/Springer, Basel},
  year={2001},
  pages={viii+126},
  isbn={978-3-0348-0905-4; 978-3-0348-0906-1},
  doi={10.1007/978-3-0348-8237-8},
}

\bib{Connes-Classification}{article}{
  author={Connes, A.},
  title={Classification of injective factors. Cases $II_{1},$ $II_{\infty },$ $III_{\lambda },$ $\lambda \not =1$},
  journal={Ann. of Math. (2)},
  volume={104},
  date={1976},
  number={1},
  pages={73--115},
  issn={0003-486X},
  doi={10.2307/1971057},
}

\bib{CKRW}{article}{
  author={Crocker, David},
  author={Kumjian, Alexander},
  author={Raeburn, Iain},
  author={Williams, Dana P.},
  title={An equivariant {B}rauer group and actions of groups on {$C^*$}-algebras},
  journal={J. Funct. Anal.},
  volume={146},
  year={1997},
  number={1},
  pages={151--184},
  issn={0022-1236},
  doi={10.1006/jfan.1996.3010},
}

\bib{Cu}{article}{
  author={Cuntz, Joachim},
  title={{$K$}-theoretic amenability for discrete groups},
  journal={J. Reine Angew. Math.},
  volume={344},
  year={1983},
  pages={180--195},
  issn={0075-4102},
  doi={10.1515/crll.1983.344.180},
}

\bib{DCo}{article}{
   author={De Commer, Kenny},
   title={Galois objects and cocycle twisting for locally compact quantum
   groups},
   journal={J. Operator Theory},
   volume={66},
   date={2011},
   number={1},
   pages={59--106},
   issn={0379-4024},
   review={\MR{2806547}},
}

\bib{DRVV}{article}{
   author={De Rijdt, An},
   author={Vander Vennet, Nikolas},
   title={Actions of monoidally equivalent compact quantum groups and
   applications to probabilistic boundaries},
   language={English, with English and French summaries},
   journal={Ann. Inst. Fourier (Grenoble)},
   volume={60},
   date={2010},
   number={1},
   pages={169--216},
   issn={0373-0956},
   review={\MR{2664313}},
}

\bib{Doran-Fell:Representations}{book}{
  author={Doran, Robert S.},
  author={Fell, James M. G.},
  title={Representations of $^*$\nobreakdash -algebras, locally compact groups, and Banach $^*$\nobreakdash -algebraic bundles. Vol. 1},
  series={Pure and Applied Mathematics},
  volume={125},
  publisher={Academic Press Inc.},
  place={Boston, MA},
  date={1988},
  pages={xviii+746},
  isbn={0-12-252721-6},
}

\bib{Doran-Fell:Representations_2}{book}{
  author={Doran, Robert S.},
  author={Fell, James M. G.},
  title={Representations of $^*$\nobreakdash -algebras, locally compact groups, and Banach $^*$\nobreakdash -algebraic bundles. Vol. 2},
  series={Pure and Applied Mathematics},
  volume={126},
  publisher={Academic Press Inc.},
  place={Boston, MA},
  date={1988},
  pages={i--viii and 747--1486},
  isbn={0-12-252722-4},
  doi={10.1016/S0079-8169(09)60018-0},
}

\bib{Ech:Morita}{article}{
  author={Echterhoff, Siegfried},
  title={Morita equivalent twisted actions and a new version of the {P}acker-{R}aeburn stabilization trick},
  journal={J. London Math. Soc. (2)},
  volume={50},
  year={1994},
  number={1},
  pages={170--186},
  issn={0024-6107},
  doi={10.1112/jlms/50.1.170},
}

\bib{EKQ}{article}{
  author={Echterhoff, Siegfried},
  author={Kaliszewski, Steven},
  author={Quigg, John},
  title={Maximal coactions},
  journal={Internat. J. Math.},
  volume={15},
  year={2004},
  number={1},
  pages={47--61},
  issn={0129-167X},
  doi={10.1142/S0129167X04002107},
}

\bib{EKQR}{article}{
  author={Echterhoff, Siegfried},
  author={Kaliszewski, Steven},
  author={Quigg, John},
  author={Raeburn, Iain},
  title={A categorical approach to imprimitivity theorems for {$C^*$}-dynamical systems},
  journal={Mem. Amer. Math. Soc.},
  volume={180},
  year={2006},
  number={850},
  pages={viii+169},
  issn={0065-9266},
  doi={10.1090/memo/0850},
}

\bib{ELPW}{article}{
  author={Echterhoff, Siegfried},
  author={L\"{u}ck, Wolfgang},
  author={Phillips, N.Christopher},
  author={Walters, Samuel},
  title={The structure of crossed products of irrational rotation algebras by finite subgroups of {${\rm SL}_2(\mathbb Z)$}},
  journal={J. Reine Angew. Math.},
  volume={639},
  year={2010},
  pages={173--221},
  issn={0075-4102},
  doi={10.1515/CRELLE.2010.015},
}

\bib{ENOO}{article}{
  author={Echterhoff, Siegfried},
  author={Nest, Ryszard},
  author={Oyono-Oyono, Herv\'{e}},
  title={Fibrations with noncommutative fibers},
  journal={J. Noncommut. Geom.},
  volume={3},
  year={2009},
  number={3},
  pages={377--417},
  issn={1661-6952},
  doi={10.4171/JNCG/41},
}

\bib{EW:locally-inner}{article}{
  author={Echterhoff, Siegfried},
  author={Williams, Dana P.},
  title={Locally inner actions on {$C_0(X)$}-algebras},
  journal={J. Operator Theory},
  volume={45},
  year={2001},
  number={1},
  pages={131--160},
  issn={0379-4024},
}

\bib{Exel:Book}{book}{
  author={Exel, Ruy},
  title={Partial dynamical systems, {F}ell bundles and applications},
  series={Mathematical Surveys and Monographs},
  volume={224},
  publisher={American Mathematical Society, Providence, RI},
  year={2017},
  pages={vi+321},
  isbn={978-1-4704-3785-5},
  doi={10.1090/surv/224},
}

\bib{FG}{article}{
  author={Feldman, J.},
  author={Greenleaf, F. P.},
  title={Existence of {B}orel transversals in groups},
  journal={Pacific J. Math.},
  volume={25},
  year={1968},
  pages={455--461},
  url={http://projecteuclid.org/euclid.pjm/1102986142},
}

\bib{Green}{article}{
  author={Green, Philip},
  title={The local structure of twisted covariance algebras},
  journal={Acta Math.},
  volume={140},
  year={1978},
  number={3-4},
  pages={191--250},
  issn={0001-5962},
  doi={10.1007/BF02392308},
}

\bib{HLS}{article}{
  author={Higson, Nigel},
  author={Lafforgue, Vincent},
  author={Skandalis, Georges},
  title={Counterexamples to the {B}aum-{C}onnes conjecture},
  journal={Geom. Funct. Anal.},
  volume={12},
  year={2002},
  number={2},
  pages={330--354},
  issn={1016-443X},
  doi={10.1007/s00039-002-8249-5},
}

\bib{HK}{article}{
  author={Higson, Nigel},
  author={Kasparov, Gennadi},
  title={{$E$}-theory and {$KK$}-theory for groups which act properly and isometrically on {H}ilbert space},
  journal={Invent. Math.},
  volume={144},
  year={2001},
  number={1},
  pages={23--74},
  issn={0020-9910},
  doi={10.1007/s002220000118},
}

\bib{HORR}{article}{
  author={Hurder, Steven},
  author={Olesen, Dorte},
  author={Raeburn, Iain},
  author={Rosenberg, Jonathan},
  title={The {C}onnes spectrum for actions of abelian groups on continuous-trace algebras},
  journal={Ergodic Theory Dynam. Systems},
  volume={6},
  year={1986},
  number={4},
  pages={541--560},
  issn={0143-3857,1469-4417},
  doi={10.1017/S0143385700003680},
  url={https://doi.org/10.1017/S0143385700003680},
}

\bib{JV}{incollection}{
  author={Julg, Pierre},
  author={Valette, Alain},
  title={Group actions on trees and {$K$}-amenability},
  booktitle={Operator algebras and their connections with topology and ergodic theory ({B}u\c {s}teni, 1983)},
  series={Lecture Notes in Math.},
  volume={1132},
  pages={289--296},
  publisher={Springer, Berlin},
  year={1985},
  doi={10.1007/BFb0074889},
}

\bib{Gleason}{article}{
  author={Gleason, A. M.},
  title={Spaces with a compact {L}ie group of transformations},
  journal={Proc. Amer. Math. Soc.},
  volume={1},
  year={1950},
  pages={35--43},
  issn={0002-9939},
  doi={10.2307/2032430},
}

\bib{Kasprzak:Rieffel}{article}{
  author={Kasprzak, Pawel},
  title={Rieffel deformation via crossed products},
  journal={ J. Funct. Anal.},
  volume={257},
  number={5},
  pages={1288--1332},
  year={2009},
  doi={10.1016/j.jfa.2009.05.013},
}

\bib{Kasprzak1}{article}{
  author={Kasprzak, Pawel},
  title={Rieffel deformation of group coactions},
  journal={Comm. Math. Phys.},
  volume={300},
  year={2010},
  number={3},
  pages={741--763},
  issn={0010-3616},
  doi={10.1007/s00220-010-1093-9},
}

\bib{KW:continuousbundles}{article}{
  author={Kirchberg, Eberhard},
  author={Wassermann, Simon},
  title={Operations on continuous bundles of {$C^*$}-algebras},
  journal={Math. Ann.},
  volume={303},
  year={1995},
  number={4},
  pages={677--697},
  issn={0025-5831},
  doi={10.1007/BF01461011},
}

\bib{KW:exact-groups}{article}{
  author={Kirchberg, Eberhard},
  author={Wassermann, Simon},
  title={Exact groups and continuous bundles of \cstar{}algebras},
  journal={Math. Annalen},
  volume={315},
  year={1999},
  pages={169--203},
  doi={10.1007/s002080050364},
}

\bib{Klep:cont}{article}{
  author={Kleppner, Adam},
  title={Continuity and measurability of multiplier and projective representations},
  journal={J. Functional Analysis},
  volume={17},
  year={1974},
  pages={214--226},
  issn={0022-1236},
  doi={10.1016/0022-1236(74)90012-3},
  url={https://doi.org/10.1016/0022-1236(74)90012-3},
}

\bib{Klep:aut}{article}{
  author={Kleppner, Adam},
  title={Measurable homomorphisms of locally compact groups},
  journal={Proc. Amer. Math. Soc.},
  volume={106},
  year={1989},
  number={2},
  pages={391--395},
  issn={0002-9939,1088-6826},
  doi={10.2307/2048818},
  url={https://doi.org/10.2307/2048818},
}

\bib{KMRW}{article}{
  author={Kumjian, Alexander},
  author={Muhly, Paul S.},
  author={Renault, Jean N.},
  author={Williams, Dana P.},
  title={The {B}rauer group of a locally compact groupoid},
  journal={Amer. J. Math.},
  volume={120},
  year={1998},
  number={5},
  pages={901--954},
  issn={0002-9327,1080-6377},
  url={http://muse.jhu.edu/journals/american_journal_of_mathematics/v120/120.5kumjian.pdf},
}

\bib{Landstad:Duality}{article}{
  author={Landstad, Magnus B.},
  title={Duality theory for covariant systems},
  journal={Trans. Amer. Math. Soc.},
  volume={248},
  year={1979},
  number={2},
  pages={223--267},
  issn={0002-9947},
  doi={10.2307/1998969},
}

\bib{Mackey}{article}{
  author={Mackey, George W.},
  title={Les ensembles bor\'{e}liens et les extensions des groupes},
  journal={J. Math. Pures Appl. (9)},
  volume={36},
  year={1957},
  pages={171--178},
  issn={0021-7824,1776-3371},
}

\bib{MN}{article}{
    AUTHOR = {Meyer, Ralf},
    author = {Nest, Ryszard},
     TITLE = {The {B}aum-{C}onnes conjecture via localisation of categories},
   JOURNAL = {Topology},
    VOLUME = {45},
      YEAR = {2006},
    NUMBER = {2},
     PAGES = {209--259},
      ISSN = {0040-9383},
       DOI = {10.1016/j.top.2005.07.001},
       URL = {https://doi.org/10.1016/j.top.2005.07.001},
}

\bib{mooreI}{article}{
  author={Moore, Calvin C.},
  title={Extensions and low dimensional cohomology theory of locally compact groups. {I}},
  journal={Trans. Amer. Math. Soc.},
  volume={113},
  year={1964},
  pages={40--63},
  doi={10.2307/1994090},
}

\bib{MooreII}{article}{
  author={Moore, Calvin C.},
  title={Extensions and low dimensional cohomology theory of locally compact groups. {II}},
  journal={Trans. Amer. Math. Soc.},
  volume={113},
  year={1964},
  pages={64--86},
  issn={0002-9947},
  doi={10.2307/1994090},
}

\bib{MooreIV}{article}{
  author={Moore, Calvin C.},
  title={Group extensions and cohomology for locally compact groups. {IV}},
  journal={Trans. Amer. Math. Soc.},
  volume={221},
  year={1976},
  number={1},
  pages={35--58},
  issn={0002-9947},
  doi={10.2307/1997541},
}

\bib{Nilsen}{article}{
  author={Nilsen, May},
  title={Duality for full crossed products of {$C^\ast $}-algebras by non-amenable groups},
  journal={Proc. Amer. Math. Soc.},
  volume={126},
  year={1998},
  number={10},
  pages={2969--2978},
  issn={0002-9939},
  doi={10.1090/S0002-9939-98-04598-5},
}

\bib{NT}{article}{
  author={Neshveyev, Sergey},
  author={Tuset, Lars},
  title={Deformation of {$\rm C^\ast $}-algebras by cocycles on locally compact quantum groups},
  journal={Adv. Math.},
  volume={254},
  year={2014},
  pages={454--496},
  issn={0001-8708},
  doi={10.1016/j.aim.2013.12.025},
}

\bib{Quigg:Duality_twisted}{article}{
  author={Quigg, John C.},
  title={Duality for reduced twisted crossed products of $C^*$\nobreakdash -algebras},
  journal={Indiana Univ. Math. J.},
  volume={35},
  year={1986},
  number={3},
  pages={549--571},
  issn={0022-2518},
  doi={10.1512/iumj.1986.35.35029},
}

\bib{Quigg:Full}{article}{
  author={Quigg, John C.},
  title={Full \cstar{}crossed product duality},
  journal={J. Austral. Math. Soc. Ser. A},
  volume={50},
  date={1991},
  number={1},
  pages={34--52},
  issn={0263-6115},
}

\bib{Quigg}{article}{
  author={Quigg, John C.},
  title={Landstad duality for {$C^*$}-coactions},
  journal={Math. Scand.},
  volume={71},
  year={1992},
  number={2},
  pages={277--294},
  issn={0025-5521},
  doi={10.7146/math.scand.a-12429},
}

\bib{Rae}{article}{
  author={Raeburn, Iain},
  title={Deformations of Fell bundles and twisted graph algebras},
  journal={Math. Proc. Camb. Phil. Soc.},
  year={2016},
  volume={161},
  pages={535--558},
}

\bib{Raeburn-Williams:Morita_equivalence}{book}{
  author={Raeburn, Iain},
  author={Williams, Dana P.},
  title={Morita equivalence and continuous-trace $C^*$\nobreakdash -algebras},
  series={Mathematical Surveys and Monographs},
  volume={60},
  publisher={Amer. Math. Soc.},
  place={Providence, RI},
  date={1998},
  pages={xiv+327},
  isbn={0-8218-0860-5},
  doi={10.1090/surv/060},
}

\bib{Rieffel:Deformation}{article}{
  author={Rieffel, Marc A.},
  title={Deformation quantization for actions of {${\bf R}^d$}},
  journal={Mem. Amer. Math. Soc.},
  volume={106},
  year={1993},
  number={506},
  pages={x+93},
  issn={0065-9266},
  doi={10.1090/memo/0506},
}

\bib{Rief-K}{article}{
  author={Rieffel, Marc A.},
  title={{$K$}-groups of {$C^*$}-algebras deformed by actions of {${\bf R}^d$}},
  journal={J. Funct. Anal.},
  volume={116},
  year={1993},
  number={1},
  pages={199--214},
  issn={0022-1236},
  doi={10.1006/jfan.1993.1110},
}

\bib{Tu}{article}{
  author={Tu, Jean-Louis},
  title={The {B}aum-{C}onnes conjecture and discrete group actions on trees},
  journal={$K$-Theory},
  volume={17},
  year={1999},
  number={4},
  pages={303--318},
  issn={0920-3036},
  doi={10.1023/A:1007751625568},
}

\bib{Dana:book}{book}{
  author={Williams, Dana P.},
  title={Crossed products of {$C{^\ast }$}-algebras},
  series={Mathematical Surveys and Monographs},
  volume={134},
  publisher={American Mathematical Society, Providence, RI},
  year={2007},
  pages={xvi+528},
  isbn={978-0-8218-4242-3; 0-8218-4242-0},
  doi={10.1090/surv/134},
}

\bib{Yamashita}{article}{
    AUTHOR = {Yamashita, Makoto},
     TITLE = {Deformation of algebras associated with group cocycles},
   JOURNAL = {J. Noncommut. Geom.},
    VOLUME = {17},
      YEAR = {2023},
    NUMBER = {4},
     PAGES = {1145--1166},
      ISSN = {1661-6952,1661-6960},
       DOI = {10.4171/jncg/522},
       URL = {https://doi.org/10.4171/jncg/522},
}

  \end{biblist}
\end{bibdiv}

\vskip 0,5pc

\end{document}